\numberwithin{equation}{section}
\newtheorem{theorem}{Theorem}[section]
\newtheorem{proposition}[theorem]{Proposition}
\newtheorem{corollary}[theorem]{Corollary}
\newtheorem{lemma}[theorem]{Lemma}
\newtheorem{example}[theorem]{Example}
\newcommand{\cali}[1]{\mathscr{#1}}
\newcommand{\supp}{{\rm Supp}}
\newcommand{\capK}{{\text{\rm cap}}}
\newcommand{\Leb}{\mathop{\mathrm{Leb}}\nolimits}
\newcommand{\dist}{\mathop{\mathrm{dist}}\nolimits}
\newcommand{\ddc}{dd^c}
\newcommand{\dc}{d^c}
\newcommand{\loc}{{loc}}
\newcommand{\dbar}{\overline\partial}
\newcommand{\ddbar}{\partial\overline\partial}
\newcommand{\Cc}{\cali{C}}
\newcommand{\B}{\mathbb{B}}
\newcommand{\C}{\mathbb{C}}
\newcommand{\D}{\mathbb{D}}
\newcommand{\N}{\mathbb{N}}
\newcommand{\R}{\mathbb{R}}
\renewcommand\P{\mathbb{P}}
\title{\bf Moser-Trudinger inequalities and complex Monge-Amp\`ere equation}
\providecommand{\keywords}[1]{\textbf{\textit{Keywords:}} #1}
\providecommand{\subject}[1]{\textbf{\textit{Mathematics Subject Classification 2010:}} #1}
\author{Tien-Cuong Dinh,  George Marinescu, and  Duc-Viet Vu}
\newcommand{\Addresses}{{	
		\textsc{Tien-Cuong Dinh, Department of Mathematics, National University 
of Singapore, 10 Lower Kent Ridge Road, Singapore 119076.}
\noindent
		\par\nopagebreak
		\noindent
		\textit{E-mail address}: \texttt{matdtc@nus.edu.sg}
		\newline
		
		\textsc{George Marinescu, University of Cologne, Department of Mathematics and Computer Science, Division of Mathematics, Weyertal 86-90, 50931, K\"oln,  Germany.}
		\noindent
		\par\nopagebreak
		\noindent
		\textit{E-mail address}: \texttt{gmarines@math.uni-koeln.de}	
		\newline
		
		\textsc{Duc-Viet Vu, University of Cologne, Department of Mathematics and Computer Science, Division of Mathematics, Weyertal 86-90, 50931, K\"oln,  Germany.}
		\noindent
		\par\nopagebreak
		\noindent
		\textit{E-mail address}: \texttt{vuviet@math.uni-koeln.de}	
}}
\date{\today}
\begin{document}
\maketitle
\begin{abstract}  We present a version of the Moser-Trudinger 
inequality in the setting of complex geometry. 
As a very particular case, our result already gives a new Moser-Trudinger 
inequality for functions in the Sobolev space $W^{1,2}$ 
of a domain in $\R^2$.  
We also deduce a new necessary condition 
for the existence of a H\"older continuous solution
of the complex Monge-Amp\`ere equation with right-hand side a given 
measure on a compact K\"ahler manifold.
\end{abstract}
\noindent
\keywords Sobolev space, Moser-Trudinger inequality, Monge-Amp\`ere equation, 
{plurisubharmonic function}, {closed positive current}.

\noindent
\subject{32Uxx}, {32W20}, {46E35}.



\section{Introduction} \label{s:intro}

Moser-Trudinger inequalities are important in Functional Analysis and 
Partial Differential Equations. 
There exist various versions of the Moser-Trudinger inequalities, see 
\cite{ChangYang, Cianchi, Moser, Hoang-MT, Trudinger} 
and the references therein, to cite just a few. 
We just recall here a well-known version of this inequality in the real 
two-dimensional setting. 

Let $\Omega$ be a domain in $\C \approx\R^2$. Let $W^{1,2}(\Omega)$ 
be the Sobolev space of square integrable functions on $\Omega$ 
whose partial derivatives of order one are also square integrable on 
$\Omega$. 
We will denote by $\Leb$ the Lebesgue measure in the Euclidean spaces. 

\begin{theorem}[\cite{Moser}] \label{t:Moser}   
Let $K$ be a compact subset of $\Omega$.  
There exist strictly positive constants $\alpha$ and $c$ such that 
\begin{align}\label{e:MT-1D}
\int_K e^{\alpha |u|^2} d \Leb \le  c
\end{align}
for every $u \in W^{1,2}(\Omega)$ of  $W^{1,2}$-norm at most $1$.
\end{theorem}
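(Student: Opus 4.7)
The plan is to prove this by the classical three-step strategy: localize via a cutoff, symmetrize via rearrangement, and then exploit a logarithmic pointwise bound for radial $W^{1,2}$ functions.

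First, I would pass from $u$ to a compactly supported function. Fix a smooth cutoff $\chi\in C^\infty_c(\Omega)$ with $\chi\equiv 1$ on a neighborhood of $K$, and set $v:=\chi u$, extended by zero to $\R^2$. Then $v\in W^{1,2}(\R^2)$ with both $\|v\|_{L^2}$ and $\|\nabla v\|_{L^2}$ bounded by a constant $C=C(\chi)$ whenever $\|u\|_{W^{1,2}}\le 1$; the support of $v$ lies in some fixed compact $K'\Subset\Omega$; and $v=u$ on $K$. Hence it suffices to bound $\int_{\R^2}e^{\alpha v^2}\,d\Leb$.

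Next, I would invoke the P\'olya-Szeg\H{o} principle. Let $v^*$ be the symmetric decreasing rearrangement of $|v|$; then $v^*$ is supported in a ball $B_R\subset\R^2$ with $R$ depending only on $|K'|$, one has $\|\nabla v^*\|_{L^2}\le\|\nabla v\|_{L^2}$, and by equimeasurability $\int_{\R^2}e^{\alpha v^2}\,d\Leb=\int_{\R^2}e^{\alpha (v^*)^2}\,d\Leb$. Writing $v^*(x)=w(|x|)$ with $w$ absolutely continuous on $(0,R]$, nonincreasing, and $w(R)=0$, the fundamental theorem of calculus followed by Cauchy-Schwarz in the weight $s\,ds$ yields
\[
|w(r)|\le\int_r^R|w'(s)|\,ds\le\bigg(\int_r^R s\,|w'(s)|^2\,ds\bigg)^{1/2}\bigg(\int_r^R\frac{ds}{s}\bigg)^{1/2}\le\frac{\|\nabla v^*\|_{L^2}}{\sqrt{2\pi}}\sqrt{\log(R/r)}.
\]

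Finally, setting $M:=\|\nabla v\|_{L^2}^2\le C^2$, this pointwise bound gives $e^{\alpha w(r)^2}\le (R/r)^{\alpha M/(2\pi)}$, so
\[
\int_{\R^2}e^{\alpha(v^*)^2}\,d\Leb=2\pi\int_0^R e^{\alpha w(r)^2}\,r\,dr\le 2\pi R^{\alpha M/(2\pi)}\int_0^R r^{1-\alpha M/(2\pi)}\,dr,
\]
which is finite as soon as $\alpha M<4\pi$. Any choice $\alpha<4\pi/C^2$ then yields a uniform constant $c$ valid for all $u$ in the unit ball of $W^{1,2}(\Omega)$. The main technical point is the radial logarithmic estimate; the rest is routine bookkeeping of constants together with the P\'olya-Szeg\H{o} inequality, which I would invoke as a black box. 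This sharp logarithmic bound is precisely what forces $\alpha$ to be bounded above, and Moser's original argument refines it to the optimal constant $\alpha=4\pi$.
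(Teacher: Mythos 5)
Your proof is essentially correct, but note that the paper does not prove Theorem \ref{t:Moser} at all: it is quoted from Moser's paper and used as a black box, entering only as the base case $n=1$ of the induction that proves Theorem \ref{t:main}. Your cutoff--P\'olya-Szeg\H{o}--radial-log-estimate argument is the classical route (essentially the first part of Moser's own proof, stopping before the refinement that yields the sharp exponent), and it is entirely adequate here since the statement only asks for \emph{some} $\alpha>0$ and $c$, not the optimal $\alpha$. The only point to tidy up is the bookkeeping at infinity: since $e^{\alpha v^2}\equiv 1$ off the support of $v$, the quantities $\int_{\R^2}e^{\alpha v^2}\,d\Leb$ and $\int_{\R^2}e^{\alpha (v^*)^2}\,d\Leb$ are infinite as written, so the equimeasurability identity should be applied to the function $\Phi(t):=e^{\alpha t^2}-1$ (which vanishes at $0$), giving
\begin{align*}
\int_K e^{\alpha u^2}\,d\Leb \;\le\; \Leb(K)+\int_{\R^2}\bigl(e^{\alpha v^2}-1\bigr)\,d\Leb
\;=\;\Leb(K)+\int_{\R^2}\bigl(e^{\alpha (v^*)^2}-1\bigr)\,d\Leb
\;\le\;\Leb(K)+\int_{B_R}e^{\alpha (v^*)^2}\,d\Leb,
\end{align*}
after which your radial estimate and the computation over $B_R$ go through verbatim and give a bound depending only on $R$ (hence on $K$, $\chi$) once $\alpha\|\nabla v\|_{L^2}^2<4\pi$, which your choice $\alpha<4\pi/C^2$ guarantees uniformly over the unit ball of $W^{1,2}(\Omega)$. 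This is a presentational fix, not a gap; with it, your argument is a complete and self-contained proof of the statement the paper merely cites.
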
 

In higher real dimension $n$,  in the present literature, a similar 
inequality holds if we consider the Sobolev space $W^{1,n}$ 
instead of $W^{1,2}$ and the term $|u|^2$ in (\ref{e:MT-1D}) 
is replaced by $|u|^{\frac{n}{n-1}}$. 

Our aim in this paper is  to give a version of the classical Moser-Trudinger inequality 
in the setting of complex geometry. Our result is already new 
in the case of complex dimension one as shown by 
Corollary \ref{c:CRgeneric} below and the comment following it.
Let us first set some notations. 
 
Let $n$ be a positive integer and $\Omega$ a domain in 
$\C^n \approx \R^{2n}$. Let $W^{1,2}(\Omega)$ be the set of square 
integrable functions on $\Omega$ whose partial derivatives of order one are 
also square integrable on $\Omega$.  Let $W^{1,2}_*(\Omega)$ be the 
set of  $u\in W^{1,2}(\Omega)$ such that there exists 
a closed positive $(1,1)$-current $T=T_u$ of bounded mass on $\Omega$ 
with
\begin{align} \label{e:W12-Def}
i \partial u \wedge \overline \partial u \le T.
\end{align}
This functional space was introduced in \cite{DS_decay} 
in the context of complex dynamics and studied in more details in 
\cite{Vigny}, see \cite{DLW,Vu_nonkahler_topo_degree} 
for recent applications to  complex dynamics. 
For $u \in W^{1,2}_*(\Omega),$ put 
$$\|u\|^2_*:= \|u\|_{L^2}^2+
\inf\big\{\|T\|:  \, T\, \text{ satisfies (\ref{e:W12-Def})}\big\}.$$
Note that by the compactness of the space of closed positive currents, 
the last infimum is actually a minimum.
The last formula defines a norm on $W^{1,2}_*(\Omega)$ that becomes 
a Banach space with respect to this norm  (\cite[Proposition 1]{Vigny}). 
In dimension one, we have $W^{1,2}_*(\Omega)=W^{1,2}(\Omega)$.

We set $\dc= \frac{1}{2 \pi}(\overline \partial - \partial)$, 
so that $\ddc= \frac{i}{\pi} \partial \overline \partial$. In the complex 
one-dimensional case, $\ddc$ is simply the Laplace operator. 
We will need some notions from the pluripotential theory. 
We refer to \cite{Bedford_Taylor_82,Chern_Levine_Nirenberg,Demailly_ag,
Klimek,Kolodziej05,Sibony_duke} for an introduction to this topic. 
Recall that a \emph{plurisubharmonic} (or p.s.h.\  for short) function 
$v$ on $\Omega$ satisfies that $\ddc v$ is a closed positive $(1,1)$-current on 
$\Omega$.  A subset $A$ of $\Omega$ is said to be \emph{pluripolar}
if there exists a p.s.h.\ function $v\not\equiv-\infty$ on $\Omega$ such that 
$A \subset \{v= -\infty\}$.  
By a classical  result of Josefson \cite{josefson,Kolodziej05}, 
a locally pluripolar set is pluripolar.
Hence, we don't need to precise the ambient domain when we talk about 
a pluripolar set, see also \cite{Vu_pluripolar}.   

Let $v_1, \ldots, v_n$ be bounded p.s.h.\  functions on $\Omega$. 
It is classical in the pluripotential theory that the intersection 
$\ddc v_1 \wedge \cdots \wedge\ddc v_n$ is well-defined 
and is a positive measure on $\Omega$ having no mass 
on pluripolar subsets of $\Omega$. 
In general, that measure can be \emph{singular} with respect 
to  the Lebesgue measure 
on $\Omega$, see for example \cite{Hiep_holder,Vu_MA}.  
Here is our main result.

\begin{theorem}\label{t:main} 
Let $\Omega$ be a domain in $\C^n$ and $K$ a compact subset of $\Omega$.  
Let $v_1,\ldots, v_n$ be p.s.h.\  functions  which are  H\"older continuous 
of exponent $\beta\in (0,1)$ on $\Omega$. Let  $u \in W^{1,2}_*(\Omega)$. 
Assume that $\|v_j\|_{\cali{C}^\beta} \le 1$ for $1 \le j \le n$  and $\|u\|_* \le 1$. 
Then there exist  strictly positive constants $\alpha$ and $c$ depending on 
$\Omega,K,\beta$ but independent of $u,v_1, \ldots, v_n$ such that
\begin{align}\label{e:main-th}
\int_K e^{\alpha |u|^2} \ddc v_1 \wedge  \cdots \wedge \ddc v_n \le c.
\end{align}
In particular, $u$ belongs to $L^p_{loc}$ with respect to the 
measure $\ddc v_1 \wedge  \cdots \wedge \ddc v_n$ for every 
$p\in[1,\infty)$. 
\end{theorem}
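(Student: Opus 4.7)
The plan is to reduce the exponential inequality to polynomial moment bounds, and then obtain the moment bounds by an iterated integration by parts against the H\"older continuous potentials.

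First, by Taylor expansion,
$$\int_K e^{\alpha |u|^2} \ddc v_1 \wedge \cdots \wedge \ddc v_n = \sum_{k \ge 0} \frac{\alpha^k}{k!} \int_K |u|^{2k} \ddc v_1 \wedge \cdots \wedge \ddc v_n,$$
so it suffices to establish polynomial moment bounds of the form
$$\int_K |u|^{2k} \ddc v_1 \wedge \cdots \wedge \ddc v_n \le C (Nk)^k$$
for every $k \ge 0$, with $C, N > 0$ depending only on $\Omega, K, \beta$. By Stirling's formula this implies convergence of the series for any $\alpha < 1/(Ne)$. The $L^p_{\loc}$ statement then follows by H\"older's inequality. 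By a standard regularization and cutoff argument one reduces to the case where $u$ and the $v_j$'s are smooth and $u$ has compact support in $\Omega$; after subtracting constants one may also assume $0 \le v_j \le M$.

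The heart of the proof is an integration-by-parts identity. For any smooth compactly supported $u$, smooth $v$, and smooth closed positive $(n-1,n-1)$-current $S$, two applications of Stokes' theorem give (after the cancellation of a $\int v u^{2k-2} du \wedge \dc u \wedge S$ contribution)
$$\int u^{2k} \ddc v \wedge S = -k \int u^{2k-1} \bigl( dv \wedge \dc u + du \wedge \dc v \bigr) \wedge S.$$
Combined with the Cauchy--Schwarz inequality for positive currents one obtains
$$\Bigl| \int u^{2k} \ddc v \wedge S \Bigr| \le 2k \Bigl( \int u^{2k} du \wedge \dc u \wedge S \Bigr)^{1/2} \Bigl( \int u^{2k-2} dv \wedge \dc v \wedge S \Bigr)^{1/2}.$$
Using $du \wedge \dc u \le C T_u$ (from \eqref{e:W12-Def}) and $dv \wedge \dc v \le \tfrac{1}{2} \ddc(v^2)$ (valid whenever $v \ge 0$ is p.s.h., since $\ddc(v^2) = 2dv \wedge \dc v + 2v\,\ddc v$), this becomes
$$\int u^{2k} \ddc v \wedge S \le C k \Bigl( \int u^{2k} T_u \wedge S \Bigr)^{1/2} \Bigl( \int u^{2k-2} \ddc(v^2) \wedge S \Bigr)^{1/2}.$$

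The plan is to iterate this recursion roughly $k$ times, cycling through the factors $\ddc v_1, \ldots, \ddc v_n$ and replacing them successively by $\ddc v_j^{2^m}$ (whose $\mathcal{C}^\beta$ norms are controlled by Leibniz-type estimates from $\|v_j\|_{\mathcal{C}^\beta} \le 1$ and $|v_j| \le M$). Each iteration produces a factor of order $Ck$ in the estimate and lowers the power of $u$ by two; after $k$ rounds, $u^{2k}$ is reduced to $u^0 = 1$ and one is left with total masses of top-degree closed positive currents built from $T_u$ and $\ddc$'s of bounded potentials (so their masses are uniformly controlled by Chern--Levine--Nirenberg). Combining, $\int |u|^{2k} d\mu \le C (Nk)^k$.

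The main obstacle will be to give a precise meaning to, and uniformly control, the iterated wedge products that arise in this recursion. The current $T_u$ is only a closed positive $(1,1)$-current of bounded mass and need not admit a bounded potential; consequently, iterated products $T_u \wedge T_u \wedge \cdots$ are not a priori defined. One must therefore organize the iteration so that at every step the wedge product contains at most one $T_u$ factor and at least one bounded (H\"older) potential, exploiting the Bedford--Taylor intersection theory. The uniform bound $\|v_j\|_{\mathcal{C}^\beta} \le 1$ is essential precisely to bound the Hölder and $L^\infty$ norms of the auxiliary potentials $v_j^{2^m}$, and in turn the masses of the mixed wedge products, uniformly along the iteration. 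The H\"older exponent $\beta$ enters quantitatively through the moderation properties (in the spirit of Dinh--Nguyen--Sibony) of the measures $\ddc v_j \wedge T_u \wedge S$, which guarantee that the residual ``mass'' factor at the end of the iteration is bounded by a constant depending only on $\Omega, K, \beta$.
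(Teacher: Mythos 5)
Your reduction of the exponential bound to moment estimates $\int_K |u|^{2k}\,d\mu \le C(Nk)^k$ is a legitimate general strategy, but the iteration you propose to prove these moments does not close, and this is a genuine gap rather than a technical detail. Look at your key inequality: after integration by parts and Cauchy--Schwarz, the factor that absorbs $du\wedge \dc u \le C\,T_u$ is $\bigl(\int u^{2k}\, T_u\wedge S\bigr)^{1/2}$, i.e.\ it still carries the \emph{full} power $u^{2k}$; only the companion factor, which keeps a $\ddc(v^2)$, has its power lowered to $u^{2k-2}$. (Any other split of $u^{2k-1}$ in the Cauchy--Schwarz only makes the companion power larger.) So to make progress on the branch containing $T_u$ you must integrate by parts against one of the remaining potentials $v_2,\dots,v_n$, and that step produces a term $\int u^{2k}\, T_u\wedge T_u\wedge \ddc v_3\wedge\cdots$, which is undefined: $T_u$ is merely a closed positive $(1,1)$-current of bounded mass, with no bounded potential, so self-intersections are not available. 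Since each pass through the recursion either leaves the power of $u$ untouched while consuming one of the $n$ slots $\ddc v_j$ (inserting a $T_u$), or lowers the power by $2$ without touching the $T_u$-branch, you cannot drive $u^{2k}$ down to $u^0$ while keeping at most one $T_u$ factor, contrary to what your last paragraph hopes to arrange. There is no bookkeeping of the branching that avoids this; the obstruction is structural.

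A second, independent warning sign: nowhere does your argument use the H\"older continuity of the $v_j$ in a way that could fail for merely continuous potentials --- the mass bounds you invoke (Chern--Levine--Nirenberg, Bedford--Taylor) and the control of $\|v_j^{2^m}\|_{\mathcal C^\beta}$ only require boundedness, and the appeal to ``moderation'' is not a concrete mechanism inside the iteration. But the theorem is false for merely continuous $v_j$ (see Example \ref{ex-contiu}), so any proof must exploit the exponent $\beta$ quantitatively. The paper does this in two steps that differ completely from your route: first a reduction to $v_j=\|x\|^2$ (Proposition \ref{pro-step2tusmoothtoiMA}), where the H\"older hypothesis enters through regularization $v_{1,\varepsilon}$ of $v_1$ and the explicit trade-off $\varepsilon^{-2}e^{-\alpha N^2}+\varepsilon^{\beta/2}$, optimized in $\varepsilon$, applied to the tails $\{u\ge N\}$; second, for the Lebesgue measure, an induction on the dimension using slicing of $u$ and of $T_u$ along complex lines through a point where a potential of $T_u$ is finite, together with Lemma \ref{l:Sobolev-mean-T} for the averaged function, so that the classical one-dimensional Moser--Trudinger inequality (Theorem \ref{t:Moser}) can be applied on each slice. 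If you want to salvage a moment-based proof, you would at minimum need a substitute for the slicing step that converts the structural inequality $i\partial u\wedge\overline\partial u\le T_u$ into dimension-one information, and a place where $\beta$ enters with the correct quantitative strength; as written, the proposal proves neither.
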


Let us stress two features of Theorem \ref{t:main}. Firstly, 
unlike other known higher dimensional versions of Moser-Trudinger inequalities, 
we get the term $|u|^2$ as in the complex one-dimensional case.
Secondly, \eqref{e:main-th} holds for much more 
general measures than the Lebesgue measure.  Let us make some more comments about 
(\ref{e:main-th}). Firstly, since the elements in $W^{1,2}_*(\Omega)$ 
are \emph{a priori} only measurable functions with respect to Lebesgue 
measures, it is not obvious that the integral in the left-hand side of 
(\ref{e:main-th}) makes sense. To simplify the situation, 
one can consider for the moment $u$ continuous  in (\ref{e:main-th}) 
with values in $\R\cup\{\pm \infty\}$ and the last inequality tells us that 
the integral is bounded uniformly in $u$. Actually, every $u \in W^{1,2}_*(\Omega)$ 
can be represented, in a canonical way, by Borel functions defined 
on $\Omega$ except possibly a pluripolar subset of $\Omega$, 
and such two representatives are equal outside a pluripolar set. 
So the integral in (\ref{e:main-th}) is the integral of one of such 
representatives of $u$ and it is independent of the choice 
of such a representative, see Theorem \ref{t:W12-rep-limit} below and \cite{Vigny}.  Note that the H\"older continuity of $v_j$ in Theorem \ref{t:main} is necessary; see Example \ref{ex-contiu} for a counter-example if $v_j$'s are merely continuous. 

We now present some consequences of Theorem \ref{t:main}.  
Let $(X,\omega)$ be a compact K\"ahler manifold of dimension $n$. 
Let $\varphi$ be a bounded $\omega$-p.s.h.\  function on $X$. 
Recall that $\varphi$ is called $\omega$-p.s.h.\  if $\varphi$ is locally 
the sum of a p.s.h.\  and a smooth function and 
$\ddc \varphi+\omega \ge 0$ in the sense of currents.
The measure $\mu:= (\ddc \varphi+ \omega)^n$ is called 
\emph{a Monge-Amp\`ere measure}. These measures are
central objects of study in complex geometry and pluripotential theory. 
If $\varphi$ is H\"older continuous, 
$\mu$ is called a \emph{Monge-Amp\`ere measure with H\"older potentials}. 
The following is a direct consequence of our main result.

\begin{corollary} \label{c:MA-eq} 
Let $X$ be a compact K\"ahler manifold.  Let $\mu$ be a Monge-Amp\`ere measure with H\"older potentials on $X$. Then there exist  strictly positive constants $\alpha$ and  $c$ such that 
$$\int_X e^{\alpha |u|^2} d \mu \le c$$
for every $u \in W^{1,2}_*(X)$ with $\|u\|_* \le 1.$
\end{corollary}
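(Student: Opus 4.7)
The plan is to reduce Corollary \ref{c:MA-eq} to Theorem \ref{t:main} by a standard localization on $X$. First, I fix a finite open cover $\{\Omega_j\}_{j=1}^N$ of $X$ by coordinate charts biholomorphic to open subsets of $\C^n$, together with relatively compact compact sets $K_j \Subset \Omega_j$ such that $X = \bigcup_j K_j$. On each $\Omega_j$ the K\"ahler form $\omega$ admits a smooth local potential, i.e.\ $\omega = \ddc \rho_j$ for some smooth function $\rho_j$. If $\mu = (\ddc\varphi + \omega)^n$ with $\varphi$ H\"older continuous of exponent $\beta$, then on $\Omega_j$ we may write
\begin{align*}
\mu\big|_{\Omega_j} = (\ddc v_j)^n, \qquad v_j := \varphi + \rho_j,
\end{align*}
and $v_j$ is p.s.h.\ and H\"older continuous of exponent $\beta$ on $\Omega_j$ because $\rho_j$ is smooth and $\varphi$ is H\"older. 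This puts the measure in the form required by Theorem \ref{t:main}, taking $v_1 = \cdots = v_n = v_j$.

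Next, I need to restrict $u \in W^{1,2}_*(X)$ to $\Omega_j$ and control its local norm. The definition of $W^{1,2}_*(X)$ in the excerpt is the natural one on a compact K\"ahler manifold: $u \in L^2(X)$ such that there exists a closed positive $(1,1)$-current $T$ on $X$ with $i\partial u \wedge \overline\partial u \le T$, normed by $\|u\|_*^2 = \|u\|_{L^2}^2 + \inf\|T\|$. For $u$ with $\|u\|_* \le 1$, I pick an optimal $T$; its restriction to $\Omega_j$ satisfies $i\partial u \wedge \overline\partial u \le T|_{\Omega_j}$ on $\Omega_j$ and has mass at most $\|T\|_X \le 1$. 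Hence $u|_{\Omega_j} \in W^{1,2}_*(\Omega_j)$ with norm bounded by a constant independent of $u$.

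After a fixed rescaling $v_j \mapsto v_j/M_j$ with $M_j := \|v_j\|_{\cali{C}^\beta}$ (which multiplies $(\ddc v_j)^n|_{K_j}$ by $M_j^{-n}$) and a similar fixed rescaling of $u|_{\Omega_j}$ to make its $W^{1,2}_*$-norm $\le 1$, Theorem \ref{t:main} applied on $(K_j,\Omega_j)$ yields strictly positive constants $\alpha_j, c_j$, depending only on $\Omega_j, K_j, \beta$, such that
\begin{align*}
\int_{K_j} e^{\alpha_j |u|^2} \, d\mu \le c_j.
\end{align*}
Setting $\alpha := \min_j \alpha_j$ and $c := \sum_j c_j$ and summing over the finite cover completes the proof. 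There is no serious obstacle here; the only point requiring care is the bookkeeping of the two rescalings so that $\alpha$ and $c$ come out uniform in $u$, and the observation that the Hölder exponent of $v_j = \varphi + \rho_j$ is inherited from $\varphi$ since $\rho_j$ is smooth and the cover is finite.
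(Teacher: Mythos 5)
Your localization argument is exactly how the paper obtains this corollary: it is stated there as a direct consequence of Theorem \ref{t:main}, via covering $X$ by charts, writing $\mu=(\ddc(\varphi+\rho_j))^n$ with $\rho_j$ a smooth local potential of $\omega$, restricting $u$ and an optimal current $T$ to each chart, and rescaling so the hypotheses of the theorem hold. Your bookkeeping of the rescalings of $v_j$ and $u$ is sound, so the proposal is correct and coincides with the paper's approach.
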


Here we define $W^{1,2}_*(X)$ in a way similar to that of 
$W^{1,2}_*(\Omega)$.  The last result gives us a necessary 
condition to test whether a given measure is a Monge-Amp\`ere measure 
with H\"older potentials.  We refer to 
\cite{DinhVietanhMongeampere,Lucas_ex,NgocCuong-Kolodziej,Yau1978} 
and the references therein for related results. The readers can also consult \cite[Theorems 2.1 and 4.6]{DiNezzaGuedjLu-MT} for a Moser-Trudinger type inequality for quasi-psh functions of finite energy.  

We give now another application of Theorem \ref{t:main}.    Let $Y$ be a smooth \emph{generic Cauchy-Riemann} 
(real) submanifold of $\Omega$, \emph{i.e.,} given any point $a \in Y$, 
the tangent space of $Y$ at $a$ is not contained in any complex hyperplane 
of the tangent space of $\Omega$ at the point. 
The simplest example is $Y:= \R^n \cap \Omega$, 
where $\R^n \approx \R^n + i \,0 \hookrightarrow \C^n:= \R^n + i \R^n$. 
Let $K$ be a compact subset of $Y$.  Since  $\C^n \subset \P^n$ 
(the complex projective space of dimension $n$), using \cite{Vu_MA}, 
we see that the restriction of  a Lebesgue measure of $Y$ to $K$ is a 
Monge-Amp\`ere measure with H\"older potentials on $\Omega$ 
(here by a Lebesgue measure, we mean the volume with respect to a 
smooth Riemannian metric on $Y$). Hence, Theorem \ref{t:main} 
immediately gives us  the following result.   

\begin{corollary}  \label{c:CRgeneric} 
Let $\Omega$ be a domain in $\C^n$ and 
$Y$ be a smooth generic Cauchy-Riemann submanifold of $\Omega$. 
Let $K$ be a compact subset of $Y$ and let $\Leb$ denote a  fixed 
Lebesgue measure on $Y$.  
Then there exist strictly positive constants $\alpha$ and $c$ such that 
$$\int_K e^{\alpha |u|^2} d \Leb \le c$$
for every $u \in W^{1,2}_*(\Omega)$ with $\|u\|_* \le 1$.
\end{corollary}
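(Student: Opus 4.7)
The plan is to reduce Corollary \ref{c:CRgeneric} directly to Theorem \ref{t:main} by producing a single H\"older continuous psh function $v$ on a neighborhood of $K$ whose top Monge-Amp\`ere power equals the prescribed Lebesgue measure on $Y$ (up to a positive multiplicative constant).

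First I would invoke the main result of \cite{Vu_MA}. Embedding $\C^n$ as the standard affine chart of $\P^n$ equipped with the Fubini-Study form $\omega_\FS$, that result yields a H\"older continuous $\omega_\FS$-psh function $\varphi$ on $\P^n$ such that $(\omega_\FS+\ddc\varphi)^n = C\,\chac_K\Leb$ for some constant $C>0$ and some H\"older exponent $\beta\in(0,1)$. Next, since on $\C^n$ the form $\omega_\FS$ admits a smooth local potential $\rho$ with $\omega_\FS=\ddc\rho$, the function $v:=\rho+\varphi$ is psh and H\"older continuous of exponent $\beta$ on $\Omega$, and it satisfies $(\ddc v)^n = C\,\chac_K\Leb$ on $\Omega$.

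Now I would choose an open set $\Omega'$ with $K\Subset\Omega'\Subset\Omega$ and multiply $v$ by a small positive constant so that $\|v\|_{\cali{C}^\beta(\Omega')}\le 1$. Applying Theorem \ref{t:main} on $\Omega'$ with $v_1=\cdots=v_n=v$, and noting that any $u\in W^{1,2}_*(\Omega)$ with $\|u\|_*\le 1$ restricts to an element of $W^{1,2}_*(\Omega')$ of norm at most $1$ (the dominating current from \eqref{e:W12-Def} restricts as well), one obtains constants $\alpha,c'>0$ depending only on $\Omega,K,\beta$ such that
$$\int_K e^{\alpha|u|^2}(\ddc v)^n \le c'.$$
Since $(\ddc v)^n$ agrees with a positive constant multiple of $\chac_K\Leb$ on $K$ (itself a constant multiple of the fixed Lebesgue measure on $Y$, up to a bounded density), dividing by that constant and absorbing the rescaling of $v$ into $\alpha$ produces the desired inequality.

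The main obstacle is essentially off-loaded to the cited construction in \cite{Vu_MA}: the nontrivial ingredient is the existence of a \emph{H\"older} continuous $\omega_\FS$-psh potential whose Monge-Amp\`ere measure is supported on the generic CR submanifold $Y$ and coincides there with (a multiple of) its Lebesgue measure. Once this is accepted, Corollary \ref{c:CRgeneric} is a direct specialization of Theorem \ref{t:main}; no further pluripotential-theoretic work is needed.
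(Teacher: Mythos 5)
Your proposal is correct and follows essentially the same route as the paper: cite \cite{Vu_MA} (via the embedding $\C^n\subset\P^n$) to realize the Lebesgue measure of $Y$ restricted to $K$ as a Monge-Amp\`ere measure with H\"older continuous potentials, and then apply Theorem \ref{t:main}. The extra details you supply (passing to a local potential of $\omega_\FS$, rescaling $v$ and restricting to $\Omega'$) are routine and consistent with the paper's one-line deduction.
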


Observe that the last result is already new even if we apply it 
to the simplest situation where  $\Omega= \D$ is the unit disc in $\C$ 
and $K \Subset \R \cap \D$.  
We also obtain from the last result applied to $Y=\Omega$ the following corollary.

\begin{corollary} \label{cor-Lp-CV}
Let $(u_k)_k$ be a bounded sequence in $W^{1,2}_*(\Omega)$ 
converging to a function $u\in W^{1,2}_*(\Omega)$ in the sense of currents.
Then $u_k$ converges to $u$ in $L^p_\loc$ for every $1\leq p<+\infty$.
\end{corollary}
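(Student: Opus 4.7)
The plan is to combine classical Sobolev compactness (to extract $L^2_\loc$ convergence from distributional convergence) with the Moser-Trudinger estimate of Corollary~\ref{c:CRgeneric} applied to $Y=\Omega$, and then to promote $L^2_\loc$ convergence to $L^p_\loc$ via a uniform integrability / Vitali argument. The roles are neatly partitioned: the Sobolev structure supplies convergence in measure, and the Moser-Trudinger inequality rules out concentration.

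First I would establish $L^2_\loc$ convergence. Since $\|u\|_{L^2}\le \|u\|_*$, the inclusion $W^{1,2}_*(\Omega)\hookrightarrow W^{1,2}(\Omega)$ is continuous, so $(u_k)$ is bounded in $W^{1,2}_\loc(\Omega)$. Rellich--Kondrachov then yields relative compactness of $(u_k)$ in $L^2_\loc(\Omega)$. Because convergence of $0$-currents is exactly distributional convergence, every $L^2_\loc$-subsequential limit must coincide with $u$; a standard subsequence argument forces the full sequence to converge to $u$ in $L^2_\loc(\Omega)$, and hence in Lebesgue measure on every compact subset.

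Next I would exploit the Moser-Trudinger inequality. Set $M:=\sup_k\|u_k\|_*+\|u\|_*<\infty$ and fix a compact $K\subset\Omega$. Applying Corollary~\ref{c:CRgeneric} with $Y=\Omega$ to the rescaled functions $u_k/M$ and $u/M$ produces constants $\alpha,c>0$ depending only on $\Omega$ and $K$ with
$$\int_K e^{\alpha|u_k|^2/M^2}\,d\Leb\le c\quad\text{for all }k,\qquad \int_K e^{\alpha|u|^2/M^2}\,d\Leb\le c.$$
For any fixed $p\ge 1$ and $R$ large enough, the map $t\mapsto t^p e^{-\alpha t^2/M^2}$ is decreasing on $[R,\infty)$, hence $|u_k|^p\le R^p e^{-\alpha R^2/M^2}\,e^{\alpha|u_k|^2/M^2}$ on $\{|u_k|\ge R\}$, which gives
$$\int_{\{|u_k|\ge R\}\cap K}|u_k|^p\,d\Leb\le cR^p e^{-\alpha R^2/M^2}\xrightarrow[R\to\infty]{}0$$
uniformly in $k$, and the same bound holds for $u$. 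Combined with $|u_k-u|^p\le 2^{p-1}(|u_k|^p+|u|^p)$, this shows that $\{|u_k-u|^p\}$ is uniformly integrable on $K$.

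Finally, Vitali's convergence theorem applied to $u_k-u\to 0$ in measure with $\{|u_k-u|^p\}$ uniformly integrable gives $u_k\to u$ in $L^p(K)$, and since $K$ was arbitrary we conclude $L^p_\loc$ convergence for every $1\le p<\infty$. Once Corollary~\ref{c:CRgeneric} is in hand, every step is routine; the only conceptual point worth noting is that it is precisely the $|u|^2$ (rather than $|u|^{n/(n-1)}$) exponent in Theorem~\ref{t:main} that yields a single integrability constant $\alpha$ defeating concentration in $L^p$ for \emph{all} finite $p$ simultaneously.
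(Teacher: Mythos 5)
Your proof is correct and rests on the same two ingredients as the paper's: the Moser--Trudinger bound for the Lebesgue measure (Theorem~\ref{t:main} with smooth potentials, i.e.\ Corollary~\ref{c:CRgeneric} with $Y=\Omega$) to control the large-value tails, and Rellich's theorem to get $L^2_\loc$ convergence, followed by a splitting at a large level. The only packaging difference is that the paper applies the exponential estimate directly to the differences $u_k-u$ and thereby obtains the slightly stronger statement $\|e^{\alpha(u_k-u)^2}-1\|_{L^1(K)}\to 0$ (Proposition~\ref{p:CV-exp}), whereas you bound $u_k$ and $u$ separately and conclude via uniform integrability and Vitali; both are valid.
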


In the next section, we present some facts about the space $W^{1,2}_*$. In the last section, we prove the main result and Corollary \ref{cor-Lp-CV}. Our proof of Theorem \ref{t:main} consists of two steps. Firstly,  we use the H\"older continuity of $v_1, \ldots, v_n$ and arguments similar to those in \cite{DVS_exponential,DinhVietanhMongeampere} to reduce the question to the case where $v_1, \ldots, v_n$ are smooth. In the second step, we use slicing of currents to make a reduction to a lower dimension case and then we apply Theorem \ref{t:Moser}.
The case with $v_j$ smooth is enough to obtain Corollary \ref{cor-Lp-CV}.

\medskip

\noindent
\textbf{Acknowledgments.} The first author is supported by the NUS 
and MOE grants AcRF Tier 1 R-146-000-319-114 and MOE- T2EP20120-0010.   
The second and third authors are 
partially supported by the DFG funded project SFB TRR 191 
(Project-ID 281071066 -- TRR 191).


\section{Properties of functions in the complex Sobolev space} \label{s:Sobolev}

Let $\Omega$ be a domain in $\C^n$. 
We present properties of the space $W^{1,2}_*(\Omega)$.   To simplify the notation, we write $W^{1,2}_*$  instead of $W^{1,2}_*(\Omega)$ if no confusion arises. 
We will use some basic results in pluripotential theory and refer the reader to  \cite{Bedford_Taylor_82, Chern_Levine_Nirenberg, Demailly_ag,Kolodziej05,Sibony_duke} for details. 

\smallskip\noindent
{\bf Standard regularization.} The following approximation of functions and 
currents will be used several times in this paper.
Let $\chi$ be a smooth  nonnegative cut-off  radial function with support 
in a ball $\B(0,R)$ of center 0 and radius $R$ in $\C^n$ such that 
$\int_{\C^n} \chi \, d \Leb =1$. For every real number $\varepsilon>0$, 
put $\chi_\varepsilon(x):= \varepsilon^{-2n} \chi(\varepsilon^{-1}x)$. 
We have $\int_{\C^n} \chi_\varepsilon \, d \Leb =1$ and the support of 
$\chi_\varepsilon$ is contained in the ball of center 0 and radius $R\varepsilon$. 
For every function $u\in L^1_\loc(\Omega)$, we define the convolution
$$u_\varepsilon(x):= u* \chi_\varepsilon (x) = 
\int_{\C^n} u(x- y) \chi_\varepsilon(y) \,d\Leb(y).$$
This is a smooth function defined on $\Omega_\varepsilon:=
\{x: \dist(x, \partial \Omega) > R \varepsilon\}$. 
We call it {\it a standard regularization} of $u$.
Since $\chi$ is nonnegative radial, if $u$ is p.s.h., 
$u_\varepsilon$ is smooth, p.s.h.\  and decreases to $u$ when 
$\varepsilon$ decreases to 0, thanks to the submean inequality. 

Let $T$  be a closed positive $(1,1)$-current on $\Omega$.
Write $T= \ddc \varphi$ locally and define 
$T_\varepsilon:= \ddc \varphi_\varepsilon$, where $\varphi_\varepsilon$ 
is the standard regularization of $\varphi$. Observe that $T_\varepsilon$ is 
independent of the choice of $\varphi$ (because if $\varphi'$ is another local potential of $T$, then $\ddc (\varphi'- \varphi)=0$, hence we obtain $\ddc \varphi'_\epsilon= \ddc\varphi_\epsilon$). 
Therefore, we obtain a closed positive $(1,1)$-current 
$T_\varepsilon$ defined on $\Omega_\varepsilon$ that we also call 
{\it a standard regularization} of $T$.

\smallskip\noindent
{\bf Wedge-product of currents and continuity.} 
Let $R$ be a closed positive current on $\Omega$. 
Recall that if $v$ is a bounded p.s.h.\  function, then $\ddc v \wedge R:= \ddc (v R)$ is a closed positive current. Hence, for bounded p.s.h.\  functions $v_1, \ldots, v_l$, we can define inductively $\ddc v_1 \wedge \ldots \wedge \ddc v_l \wedge R$. It is well-known that 
both $v_1 \ddc v_2\wedge \ldots\wedge \ddc v_l\wedge R$ and $\ddc v_1 \wedge \ldots \wedge \ddc v_l \wedge R$
 depend  continuously on $v_1, \ldots, v_l$ by taking sequences of p.s.h.\  functions decreasing to $v_1, \ldots, v_l$. So, we can apply this property for the standard regularization of $v_j$ described above. 

Let $v_1$ and $v_2$ be  bounded p.s.h.\  functions on $\Omega$. If $A>0$ is a large enough constant, we have $v_j+A\geq 0$ and hence $(v_j+A)^2$ and $(v_1+v_2+2A)^2$ are p.s.h.\  functions. It follows that $(v_1-v_2)^2$ is the difference of two  bounded p.s.h.\  functions because we can write
$$(v_1-v_2)^2= [2(v_1+A)^2+2(v_2+A)^2]-(v_1+v_2+2A)^2.$$
This, together with the identity $\ddc v^2 = 2(dv\wedge \dc v+ v\ddc v)$, allow us to define 
\begin{align}\label{e:d-dc-def}
d(v_1-v_2) \wedge \dc (v_1 -v_2) \wedge R:= {1\over 2} \ddc (v_1 - v_2)^2 \wedge R -  (v_1- v_2) \ddc (v_1- v_2) \wedge R.
\end{align} 

\smallskip\noindent
{\bf Capacity and convergence in capacity.} 
Let $K$ be a Borel subset of $\Omega$. Recall that the \emph{capacity} of $K$ in $\Omega$ is the quantity 
$$\capK (K,\Omega):= \sup \bigg\{ \int_K (\ddc v)^n: \, 0 \le v \le 1 \quad 
\text{p.s.h.\  on }\, \Omega \bigg\}.$$
This notion was introduced in \cite{Bedford_Taylor_82}.  
Every pluripolar set in $\Omega$ is of zero capacity in $\Omega$. 
Recall that  $\Omega$ is called \emph{hyperconvex} if there exists 
a continuous p.s.h.\ function $\rho: \Omega \to (-\infty, 0)$ such that $\{\rho < c\}$ is relatively compact in $\Omega$ for every constant $c<0$. Examples of such  domains are  balls in $\C^n$.  
If $\Omega$ is hyperconvex, then  a subset $A$ of $\Omega$ is pluripolar if and only if  $\capK^* (A,\Omega)=0$, where 
$$\capK^*(A, \Omega):= \inf \big\{ \capK(U, \Omega): \, A \subset U \subset \Omega, \, U \text{ open}\big\},$$
see \cite{Bedford_Taylor_82,Kolodziej05}.  

 Let $u_k$ be a Borel function defined everywhere on $\Omega$ except on a pluripolar subset of $\Omega$ for $k\in \N$. We say that $(u_k)_{k\in \N}$ is a \emph{Cauchy sequence with respect to capacity} if for every constant $\delta>0$,  every open subset $U$ in $\Omega$ and every compact set $K$ in $U$, we have  
\begin{align} \label{e:cap-cauchy-def}
\lim_{N \to \infty}\sup_{\{k, l\ge N\}} \capK \big(K\cap\{|u_k -u_l|\ge \delta\},U\big)=0.
\end{align}
 Similarly, given a Borel function $u$ defined on $\Omega$ except maybe on a pluripolar set,  we say that \emph{$u_k$ converges to $u$ in capacity as $k \to \infty$} or $u$ is \emph{a capacity limit of $(u_k)_k$} if for every open set $U \subset \Omega$, every compact set $K \Subset U$, and every constant $\delta>0$, we have 
\begin{align} \label{e:cap-limit-def}
\capK\big(K\cap \{|u_k- u|\ge \delta\}, U\big) \to 0
\end{align}
 as $k \to \infty$, see \cite{Kolodziej05} for details.  One can check that capacity limits of a given sequence only differ on pluripolar sets.   Notice also that if $u_k$ converges to some function $u$ in capacity as $k \to \infty$, then $(u_k)_k$ is a Cauchy sequence with respect to capacity. The above two notions are local because 
$$\capK(K\cup K', \Omega) \le \capK(K, \Omega)+ \capK(K', \Omega)$$ 
for $K \subset \Omega, K' \subset \Omega$  and $\capK(K, \Omega) \le \capK(K, \Omega')$ when $K\subset \Omega' \subset \Omega$.

\smallskip\noindent
{\bf Two other notions of convergence.}
Let  $u_k \in W^{1,2}_*$ for $k \in \N$ and $u \in W^{1,2}_*$.
We say that $u_k \to u$ in the \emph{weak} topology of $W^{1,2}_*$ if 
$u_k \to u$  in  the sense of distributions and $\|u_k\|_*$ is uniformly bounded. 
Since $W^{1,2}_*$ is continuously embedded in $W^{1,2}$, 
by Rellich's theorem, we have $u_k\to u$ in $L^{2n\over n-1}_\loc$ 
(or $L^p_{loc}$ for  every $1\leq p<\infty$ when $n=1$). In particular, 
we have $u_k\to u$ in $L^2_\loc$. Note that Corollary \ref{cor-Lp-CV} in Introduction gives a much stronger property.
Assume that $u_k\to u$ weakly in $W^{1,2}_*$ as above. 
Assume also that $i\partial u_k\wedge \dbar u_k\leq T_k$ 
for some closed positive $(1,1)$-current $T_k$ converging to a current $T$. 
Then we have $i\partial u\wedge \dbar u\leq T$, see \cite[p. 251]{Vigny}. 

We say that $u_k \to u$ \emph{nicely}   
if $u_k \to u$ weakly in $W^{1,2}_*$ and  for every $x \in \Omega$, 
there exist an open neighbourhood $U_x$ of $x$ and a p.s.h.\  
function $\varphi_k$ on $U_x$ such that 
\begin{align*} 
i \partial u_k \wedge \overline \partial u_k \le \ddc \varphi_k
\end{align*}
 for every $k$ and  $\varphi_k$ decreases to some p.s.h.\  function on $U_x$. 
 
\medskip
 
For $K \subset \Omega$ and $R$ a current on $\Omega$, 
we denote by $\|R\|_K$  the mass of $R$ on $K$. 
We will need the following important estimates, see 
\cite{Chern_Levine_Nirenberg, Kolodziej05}. 

\begin{lemma} \label{l:CLN+}  
Let $K \Subset \Omega$ be a compact set. Let $v_1, \ldots, v_m$ be 
bounded p.s.h.\  functions on $\Omega$ and $\varphi$ another p.s.h.\  function on 
$\Omega$. Let $R$ be a closed positive $(p,p)$-current on $\Omega$ with 
$0\leq p\leq n-1$. Then, there exists a positive constant $c$ depending 
only on $K$ and $\Omega$ such that
 $$\|\ddc v_1 \wedge \cdots \wedge \ddc v_m \wedge R 
 \|_K \le c \|v_1\|_{L^\infty(\Omega)} \cdots \|v_m\|_{L^\infty(\Omega)} 
 \|R\|_\Omega,$$
for $1\leq m\leq n-p$ and 
$$\|d (v_1-v_2) \wedge \dc (v_1- v_2) \wedge R \|_K \le 
c \big( \|v_1\|_{L^\infty(\Omega)}+ 
\|v_2\|_{L^\infty(\Omega)}\big)\|v_1-v_2 \|_{L^\infty(\Omega)}  
\|R\|_\Omega,$$
and  for every constant $N>0$, 
$$\capK\big(\{\varphi \le -N\} \cap K, \Omega\big) \le 
c N^{-1}\|\varphi\|_{L^1(\Omega)}.$$
\end{lemma}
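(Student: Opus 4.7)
The three estimates are variants of the classical Chern--Levine--Nirenberg (CLN) and Bedford--Taylor inequalities. My plan is to treat them in sequence, using the first to bootstrap the other two.

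For the first inequality, I would induct on $m$, with $m=0$ trivial. Fix an intermediate open set $K \Subset K' \Subset \Omega$ and a smooth cut-off $\chi \in C^\infty_c(K')$ with $\chi \equiv 1$ on $K$ and $0\le \chi\le 1$. After replacing $v_1$ by $v_1-\sup_{K'}v_1$ (doubling $\|v_1\|_{L^\infty}$ at worst), we may assume $v_1\le 0$ on $K'$. Writing $\omega_0$ for the standard Kähler form on $\C^n$, the smoothness of $\chi$ yields a constant $C=C(\chi)$ with $-C\omega_0\le \ddc\chi\le C\omega_0$. Using that $R$ and the $\ddc v_j$ are closed, Stokes' theorem gives
\begin{equation*}
\int_K \ddc v_1\wedge\cdots\wedge \ddc v_m\wedge R \;\le\; \int \chi\,\ddc v_1\wedge\cdots\wedge \ddc v_m\wedge R \;=\; \int v_1\,\ddc\chi\wedge \ddc v_2\wedge\cdots\wedge \ddc v_m\wedge R.
\end{equation*}
Splitting $\ddc\chi=\alpha_+-\alpha_-$ with smooth positive $(1,1)$-forms $\alpha_\pm\le C\omega_0$, and using that $\alpha_\pm\wedge \ddc v_2\wedge\cdots\wedge R$ are positive measures, the right-hand side is bounded by $2C\|v_1\|_{L^\infty}\int_{K'}\omega_0\wedge\ddc v_2\wedge\cdots\wedge \ddc v_m\wedge R$. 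The inductive hypothesis applied to the closed positive current $R':=\omega_0\wedge R$ (whose mass on an intermediate set is controlled by $\|R\|_\Omega$) and the remaining $v_2,\dots,v_m$ completes the step.

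For the second inequality, set $w:=v_1-v_2$. Using the definition (\ref{e:d-dc-def}) and integration by parts with the cut-off $\chi$ against the closed current $R$,
\begin{equation*}
\int_K dw\wedge \dc w\wedge R \;\le\; \int \chi\, dw\wedge \dc w\wedge R \;=\; \tfrac{1}{2}\int w^2\,\ddc\chi\wedge R \;-\; \int \chi\, w\,\ddc w\wedge R.
\end{equation*}
For the first term, $|w^2|\le \|w\|_{L^\infty}(\|v_1\|_{L^\infty}+\|v_2\|_{L^\infty})$ and $|\ddc\chi|\le C\omega_0$, giving the required bound by the first inequality applied to $\omega_0\wedge R$. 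For the second term, expand $w\,\ddc w = w\,\ddc v_1 - w\,\ddc v_2$ and estimate each $|\int\chi\, w\,\ddc v_j\wedge R|\le \|w\|_{L^\infty}\int\chi\,\ddc v_j\wedge R$, which by the first inequality is at most $C\|w\|_{L^\infty}\|v_j\|_{L^\infty}\|R\|_\Omega$.

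For the third inequality, let $v$ be p.s.h.\ with $0\le v\le 1$ on $\Omega$; the capacity is realised as the supremum of $\int_{\{\varphi\le -N\}\cap K}(\ddc v)^n$ over such $v$. Shifting $\varphi$ by $\sup_{K'}\varphi$ (which is finite by upper semicontinuity and bounded in absolute value by a multiple of $\|\varphi\|_{L^1(\Omega)}$ via the submean inequality), we may assume $\varphi\le 0$ near $K$. Then $\ind_{\{\varphi\le -N\}}\le (-\varphi)/N$, so the quantity to estimate reduces to $\frac{1}{N}\int_K (-\varphi)(\ddc v)^n$. Iterating integration by parts along the decomposition $(\ddc v)^n=\ddc v\wedge(\ddc v)^{n-1}$, with the cut-off $\chi$, moves all the $\ddc v$ factors onto $\chi$; since $\chi$ is smooth, the final integral is dominated by $C\|\varphi\|_{L^1(\Omega)}$. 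To legitimise the integration by parts when $\varphi$ is unbounded, I would first truncate $\varphi$ by $\max(\varphi,-M)$, apply the bounded estimates, and pass to $M\to\infty$ via Bedford--Taylor's monotone convergence for wedge products with bounded p.s.h.\ functions. Taking the supremum over admissible $v$ yields the claimed bound on capacity.

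The main obstacle is the third inequality: the other two reduce, after a cut-off, to clean Stokes manipulations, whereas controlling $\int_K(-\varphi)(\ddc v)^n$ uniformly in $v$ requires the truncation-and-limit argument to handle the fact that $\varphi$ is only locally integrable.
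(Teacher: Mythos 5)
Your treatment of the first two estimates is essentially sound. The first is the standard Chern--Levine--Nirenberg induction (which the paper does not reprove but cites from Kolodziej); your second-estimate argument is a correct variant of the paper's: where you keep the cut-off $\chi$ and bound the term $\tfrac12\int w^2\,\ddc\chi\wedge R$ by $\|w\|_{L^\infty}(\|v_1\|_{L^\infty}+\|v_2\|_{L^\infty})\|R\|_\Omega$, the paper instead replaces $v_j$ by $\max\{v_j,A(\|x\|^2-1)\}$ so that $w=v_1-v_2$ vanishes outside a compact set and that term disappears by Stokes; both routes then control $\int\chi\,w\,\ddc v_j\wedge R$ via the first estimate. (Minor point: for $p<n-1$ you should wedge with $\omega^{n-p-1}$ throughout to speak of masses, as the paper does.)

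The genuine gap is in the third estimate. The reduction to bounding $N^{-1}\int_K(-\varphi)(\ddc v)^n$ is fine, but the proposed mechanism --- ``iterating integration by parts with the cut-off $\chi$, moving all the $\ddc v$ factors onto $\chi$'' --- does not work as stated: integration by parts gives $\int\chi(-\varphi)\,\ddc v\wedge(\ddc v)^{n-1}=\int v\,\ddc\big(\chi(-\varphi)\big)\wedge(\ddc v)^{n-1}$, and $\ddc\big(\chi(-\varphi)\big)$ contains the cross terms $d\chi\wedge\dc(-\varphi)+d(-\varphi)\wedge\dc\chi$. These involve the gradient of the \emph{unbounded} weight $\varphi$ and are not controlled by $\|\varphi\|_{L^1(\Omega)}$; a Cauchy--Schwarz attempt leads to $\int d\varphi\wedge\dc\varphi\wedge(\ddc v)^{n-1}$, which can be infinite (e.g.\ $\varphi=\log|z_1|$ against a current charging the hyperplane's neighbourhood). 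Note also that your sketch never uses $\ddc\varphi\ge0$, and without plurisubharmonicity of the weight the inequality $\int_K(-\varphi)(\ddc v)^n\lesssim\|\varphi\|_{L^1}$ is simply false, since $(\ddc v)^n$ may be singular with respect to $\Leb$ and concentrated where an $L^1$ function is huge. The standard repair --- the very same device the paper uses for its second estimate --- is to incorporate the localization into $v$ rather than multiply the weight by a cut-off: on a ball $L$ with $K\Subset L\Subset\Omega$ replace $v$ by $\max\{v,A(\|x\|^2-1)\}$ so that $v=A(\|x\|^2-1)$ near $\partial L$, write $\int_L(-\varphi)(\ddc v)^n=A\int_L(-\varphi)\,\ddc\|x\|^2\wedge(\ddc v)^{n-1}+\int_L(-\varphi)\,\ddc w'\wedge(\ddc v)^{n-1}$ with $w':=v-A(\|x\|^2-1)\ge0$ compactly supported in $L$; by Stokes the last integral equals $-\int_L w'\,\ddc\varphi\wedge(\ddc v)^{n-1}\le0$ precisely because $\ddc\varphi\ge0$, and induction on the number of $\ddc v$ factors (with base case $\|\varphi\|_{L^1}$) gives the bound, uniformly in $0\le v\le1$. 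Your truncation-and-monotone-convergence remark is then a harmless technical add-on, not the main difficulty.
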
 

\proof For the first  and third estimates, we refer to 
\cite[p.\ 8 and Proposition 1.10]{Kolodziej05}, see also  
\cite{Chern_Levine_Nirenberg}. 
The proof of the first estimate is based on an induction on $m$ and 
the use of integration by parts. We use the same techniques together 
with \eqref{e:d-dc-def} in order to get the second estimate.
We give here the details as we will need them later.

Since the problem is local, we can assume that $\Omega$ is the unit ball. 
By subtracting from $v_1, v_2$ a same constant, we can assume that  
$$v_j\leq -{1\over 2} \max\big(\|v_1\|_{L^{\infty}(\Omega)}, 
\|v_2\|_{L^{\infty}(\Omega)}\big).$$ 
Let $A>0$ be equal to a large enough constant (depending on $K$) 
times  $\|v_1\|_{L^\infty(\Omega)}+ \|v_2\|_{L^\infty(\Omega)}$. 
Then we replace $v_j$ by $v_j':=\max(v_j,A(\|x\|^2-1))$, where $x$ 
denotes the standard coordinates in $\C^n$. We can choose $A$ large enough 
so that $v_j'=v_j$ on $K$ and $v_j'=A(\|x\|^2-1)$ outside a fixed ball $L$ 
such that $K\Subset L\Subset\Omega$. Since $|v_1'-v_2'|\leq |v_1-v_2|$, 
this step doesn't affect our problem. In this way, we can assume for simplicity that 
$v_1=v_2=A(\|x\|^2-1)$ outside $L$ and hence $v_1-v_2=0$ there. 
Let $\omega$ denote the standard K\"ahler form on $\C^n$. 
We have using \eqref{e:d-dc-def}
\begin{eqnarray*}
\lefteqn{\|d (v_1-v_2) \wedge \dc (v_1- v_2) \wedge R \|_K \  \leq \  
\int_\Omega d (v_1-v_2) \wedge \dc (v_1- v_2) \wedge R\wedge \omega^{n-p-1}} \\
& = &  {1\over 2} \int_\Omega \ddc (v_1 - v_2)^2 \wedge R\wedge \omega^{n-p-1}  
- \int_\Omega (v_1- v_2) \ddc (v_1- v_2) \wedge R \wedge \omega^{n-p-1}.
\end{eqnarray*}
As $v_1-v_2=0$ on $\Omega\setminus L$, the first integral in the last line 
vanishes by integration by parts. The second one is bounded by
$$\|v_1-v_2\|_{L^\infty(\Omega)} \big[\|\ddc v_1\wedge R\|_{L}+
\|\ddc v_2\wedge R\|_{L}\big].$$
We obtain the second estimate in the lemma by applying the first one to 
$L$ instead of $K$.
\endproof
 
We have the following elementary property of Cauchy sequence with 
respect to capacity.

\begin{lemma} \label{l:Cauchy-limit} 
Let $(u_k)_k$ be a sequence of continuous functions on $\Omega$. 
Assume that $(u_k)_k$ is a Cauchy sequence with respect to capacity. 
Then there exists a Borel function $u_\infty$ defined everywhere on 
$\Omega$ except on a pluripolar set such that 

$(i)$  $u_k$ converges to $u_\infty$ in capacity; if $u_k$ 
converges to another function $u_\infty'$ in capacity, then 
$u_\infty'=u_\infty$ outside a pluripolar set;

$(ii)$ there exists a sequence $(j_k)_{k} \subset \N$ converging to $\infty$ 
such that  $u_{j_k} \to u_\infty$ pointwise except on a pluripolar set as $k \to \infty$;

$(iii)$  if  $(j'_k)_{k} \subset \N$ is another sequence converging to $\infty$ 
such that $u_{j'_k}$ converges pointwise to some function $u'_\infty$ 
outside a pluripolar set, then $u'_\infty= u_\infty$ outside a pluripolar set.  
\end{lemma}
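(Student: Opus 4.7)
The plan is to imitate the $L^p$-completeness argument: extract a rapidly Cauchy subsequence, use a Borel--Cantelli-in-capacity estimate to get pointwise convergence off a small set, and finally apply Josefson's theorem to show that the exceptional set is pluripolar. First, I would fix a countable exhaustion $K_1\Subset U_1\Subset K_2\Subset U_2\Subset\cdots$ of $\Omega$ in which each $U_m$ is a finite union of open balls with $\overline{U_m}\Subset\Omega$. By the Cauchy-in-capacity hypothesis \eqref{e:cap-cauchy-def} applied with $\delta=2^{-k}$ and $(K,U)=(K_m,U_m)$, a diagonal extraction produces a strictly increasing $(j_k)_k\subset\N$ such that, for all $m$ and all $k\ge m$,
$$\capK\bigl(K_m\cap\{|u_{j_k}-u_{j_{k+1}}|\ge 2^{-k}\},\,U_m\bigr)\le 2^{-k}.$$
Setting $E_N:=\bigcup_{k\ge N}\{|u_{j_k}-u_{j_{k+1}}|\ge 2^{-k}\}$ and $E:=\bigcap_N E_N$, the telescoping series $\sum_k(u_{j_{k+1}}-u_{j_k})$ converges geometrically uniformly on a neighbourhood of every point of $\Omega\setminus E$, so $(u_{j_k})$ converges pointwise on $\Omega\setminus E$ to a Borel function $u_\infty$.

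The main step is then to prove that $E$ is pluripolar. Countable subadditivity of $\capK(\cdot,U_m)$ and the above bound give
$$\capK(K_m\cap E_N,U_m)\le\sum_{k\ge N}2^{-k}=2^{-(N-1)},$$
so $K_m\cap E$ has zero outer capacity in $U_m$. Since $\Omega$ is not assumed hyperconvex, one cannot conclude pluripolarity of $E$ directly from this. Instead, I would cover each $U_m$ by open balls $B\Subset U_m$ and invoke the monotonicity $\capK(\cdot,B)\ge\capK(\cdot,U_m)$ recalled in the present section, combined with the characterisation of pluripolar sets on hyperconvex domains by $\capK^{*}=0$; this forces $E\cap B$ to be pluripolar in $B$ for each such ball. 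Thus $E$ is locally pluripolar in $\Omega$, and Josefson's theorem \cite{josefson,Kolodziej05} promotes this to genuine pluripolarity. This proves (ii).

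For (i), given $\delta>0$ and $K\Subset U\Subset\Omega$, the inclusion
$$\{|u_k-u_\infty|\ge\delta\}\cap K\subset\bigl(\{|u_k-u_{j_l}|\ge\delta/2\}\cap K\bigr)\cup\bigl(\{|u_{j_l}-u_\infty|\ge\delta/2\}\cap K\bigr),$$
together with subadditivity of $\capK(\cdot,U)$, reduces matters to two routine capacity estimates: the second piece has capacity tending to $0$ as $l\to\infty$, because outside the pluripolar set $E$ it sits inside $E_N$ once $2^{-N+1}<\delta/2$ and $l\ge N$; the first piece has capacity tending to $0$ as $k\to\infty$, uniformly in $l$ large, directly from the Cauchy hypothesis. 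The uniqueness statements in (i) and (iii) follow by the same triangle-inequality trick applied to a competing limit $u_\infty'$: for every $m\in\N$ the set $\{|u_\infty-u_\infty'|\ge 1/m\}$ has outer capacity zero on every compact subset of every hyperconvex ball in $\Omega$, and a second application of Josefson to the countable union over $m$ finishes the argument.

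The main obstacle lies in the second paragraph: converting \emph{outer capacity zero on every compact subset of every hyperconvex ball} into the global statement that $E$ is pluripolar in $\Omega$. This is precisely where hyperconvexity of balls (to invoke the $\capK^{*}=0$ characterisation) and Josefson's theorem (to patch local pluripolarity into global pluripolarity) are both indispensable; the remainder of the argument is a faithful transcription of the $L^p$-completeness proof into the capacity setting.
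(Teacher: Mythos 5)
Your overall architecture is the same as the paper's: extract a rapidly Cauchy subsequence, telescope to get pointwise convergence off a small set, convert smallness of capacity into pluripolarity using hyperconvexity of balls and the local-to-global step (Josefson), then handle (i) and (iii) by a triangle inequality in capacity. However, the step you yourself single out as the main obstacle is carried out with an inequality pointing the wrong way. You derive the bounds $\capK(K_m\cap E_N,U_m)\le 2^{-(N-1)}$ relative to the large sets $U_m$, and then claim that the monotonicity recalled in the paper, namely $\capK(A,B)\ge\capK(A,U_m)$ for a ball $B\Subset U_m$, forces $E\cap B$ to be pluripolar in $B$. But that monotonicity says the capacity relative to the \emph{smaller} domain $B$ is the \emph{larger} one, so smallness of $\capK(\cdot,U_m)$ gives no control whatsoever on $\capK(\cdot,B)$, and you cannot invoke the characterisation of pluripolarity via $\capK^*(\cdot,B)=0$ on the hyperconvex ball $B$. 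As written, this step fails. The clean repair is the one the paper uses: since the Cauchy-in-capacity hypothesis \eqref{e:cap-cauchy-def} is quantified over \emph{all} pairs (compact set, open set), run the extraction directly with respect to a countable family of balls $\Omega'_s\Subset\Omega_s$ covering $\Omega$ (including the balls in your diagonal argument), so that all capacity estimates are already relative to the hyperconvex balls $\Omega_s$; alternatively you would need the non-trivial local comparability of Bedford--Taylor capacities for nested domains, which the paper neither states nor proves and which the quoted monotonicity does not supply.

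Two smaller points in the same circle of ideas. First, you define the bad sets with non-strict inequalities $\{|u_{j_k}-u_{j_{k+1}}|\ge 2^{-k}\}$, which are closed; to pass from capacity bounds to \emph{outer} capacity (and hence pluripolarity on a hyperconvex ball) it is cleaner to use strict inequalities so the sets are open, as the paper does with $E^s_k:=\{|u_{j^s_k}-u_{j^s_{k+1}}|>2^{-k}\}$; otherwise you must appeal to capacitability of Borel sets, an extra ingredient you do not mention. Second, your argument for (i), estimating $\capK(K\cap\{|u_{j_l}-u_\infty|\ge\delta/2\},U)$ by locating this set inside $E_N$, is fine, but note it again requires the capacity bounds for $E_N$ to be available relative to the ambient pair $(K,U)$ at hand, so it inherits the same issue as above unless the extraction is done ball by ball. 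Once these repairs are made, your proof coincides with the paper's.
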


\proof   
Assume (ii) for the moment. We explain how to get (i) and (iii).  
The second assertion in (i) is clear. Let $(u_{j_k})_k$ be a subsequence 
of $(u_k)_k$ such that $u_{j_k} \to u_\infty$ pointwise except on a pluripolar set.
Let $U$ be an open subset of $\Omega$ and  $K$ a compact subset of $U$. 
Let $\varepsilon>0 $ be a constant. By (\ref{e:cap-cauchy-def}), 
there exists $N\in \N$ big enough so that 
$$ \capK \big(K\cap\{|u_{l_1} -u_{l_2}|\ge \delta/2\},U\big) \le \varepsilon$$ 
for every $l_1,l_2 \ge N$. Applying the last inequality to $l_2= j_k$ 
and letting $k \to \infty$ give
$$ \capK \big(K\cap\{|u_{l_1} -u_\infty |\ge \delta\},U\big) \le \varepsilon$$ 
for every $l_1 \ge N$. This implies that $u_{l_1} \to u_\infty$ in capacity as 
$l_1\to \infty$. Hence, (i) follows. Let $(j'_k)_k$ and $u'_\infty$ be as in (iii). 
Then by the above arguments, we get $u_k \to u'_\infty$ in capacity. Hence, 
$u'_\infty= u_\infty$ outside a pluripolar set. 

It remains to prove (ii).  Let $(\Omega_s)_{s\in\N}$ 
be a countable covering of $\Omega$ by  open balls. 
As observed above, $\Omega_s$ is hyperconvex for every $s$. 
Let $(\Omega'_s)_{s\in\N}$ be another covering of $\Omega$ 
by open balls  such that $\Omega'_s \Subset \Omega_s$ for every $s$.
Fix $s \in\N$.  Let $\delta>0$ be a constant. 
By (\ref{e:cap-cauchy-def}), there exists a sequence 
$(j^s_k) \to\infty$ such that  for every $k$, we have
$$\capK (E_k^s\cap  \Omega'_s, \Omega_s) \le 
\capK (E_k^s\cap \overline \Omega'_s, \Omega_s) \le \delta/2^{k},$$
where $E_k^s:= \big\{|u_{j^s_k}- u_{j^s_{k+1}}| >2^{-k} \big\}$ 
which is an open set.  Hence,  for $ E^s_\delta:= \cup_{k=1}^\infty E^s_k,$ 
the sequence $(u_{j^s_k})_k$ converges uniformly on 
$\Omega'_s \backslash E^s_\delta$. 

Observe that $\capK(E_\delta^s \cap\Omega'_s, \Omega_s) \le \delta$.
For $\delta=1/m$, by a diagonal argument,  we can assume that $u_{j^s_k}$ 
converges uniformly on $\Omega'_s \backslash E^s_{1/m}$ for every $m$. 
Hence, $u_{j^s_k}$ converges pointwise on  
$\Omega'_s \backslash (\cap_m E^s_{1/m})$. Since 
$$\capK (E^s_{1/m}\cap \Omega'_s, \Omega_s)\le  1/m$$ for every $m$, 
 we obtain that  $ \capK^* (\cap_m E_{1/m}^s\cap\Omega'_s, \Omega_s) =0$. 
 Hence, $\cap_m E_{1/m}^s \cap \Omega'_s$ is pluripolar.  
 This implies that $u_{j^s_k}$ converges pointwise on $\Omega'_s$ 
 except on a pluripolar set. 

Applying the above arguments to $s=1,2, \ldots$ and using 
a diagonal argument again, we obtain a sequence $(j_k)_k \subset \N$ 
converging to $\infty$ such that $u_{j_k}$ converges pointwise on 
$\Omega'_s$ except on a pluripolar set  for every $s$. Thus, (ii) follows.
This finishes the proof. 
\endproof  
 
The following result provides a good regularization for functions in 
$W^{1,2}_*$, see \cite{Vigny}. 

\begin{lemma} \label{l:nice-CV}  
Let $u$ be a function in $W^{1,2}_*(\Omega)$ and let $u_\varepsilon$ 
be the standard regularization of $u$ as above. Then 
we have  $\|u_\varepsilon\|_{*,\Omega_\varepsilon} \le \|u\|_{*,\Omega}$, 
$\|u_\varepsilon\|_{L^\infty(\Omega_\varepsilon)} \le  \|u\|_{L^\infty(\Omega)}$, 
and if $u\ge 0$, we have $u_\varepsilon \ge 0$ for every $\varepsilon$. 
Moreover, for every sequence $(\varepsilon_k)_k$ decreasing to $0$,
we have $u_{\varepsilon_k} \to u$ nicely in $W^{1,2}_*(\Omega')$ 
for every open set $\Omega'\Subset\Omega$.
\end{lemma}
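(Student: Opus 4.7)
The bound $\|u_\varepsilon\|_{L^\infty(\Omega_\varepsilon)}\le \|u\|_{L^\infty(\Omega)}$ and the preservation of non-negativity are immediate since $\chi_\varepsilon$ is a nonnegative probability kernel; Young's inequality applied to $u*\chi_\varepsilon$ also yields $\|u_\varepsilon\|_{L^2(\Omega_\varepsilon)}\le \|u\|_{L^2(\Omega)}$. The core of the argument is the norm estimate $\|u_\varepsilon\|_{*,\Omega_\varepsilon}\le \|u\|_{*,\Omega}$. My plan is to pick a closed positive $(1,1)$-current $T$ on $\Omega$ realizing the infimum in the definition of $\|u\|_{*,\Omega}$, so that $i\partial u\wedge\dbar u\le T$, and then to establish the two pointwise inequalities
\begin{equation*}
i\partial u_\varepsilon \wedge \dbar u_\varepsilon \;\le\; T_\varepsilon \quad \text{on } \Omega_\varepsilon, \qquad \|T_\varepsilon\|_{\Omega_\varepsilon}\;\le\; \|T\|_\Omega.
\end{equation*}

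For the first of these, using $\partial_j u_\varepsilon=(\partial_j u)*\chi_\varepsilon$ and the Cauchy--Schwarz inequality in $L^2(\chi_\varepsilon\,d\Leb)$ applied to $f_\xi := \sum_j \xi_j \partial_j u$ for an arbitrary vector $\xi\in\C^n$, I obtain
\begin{equation*}
\bigl|(f_\xi)_\varepsilon(x)\bigr|^2 \;\le\; \int \bigl|f_\xi(x-y)\bigr|^2\,\chi_\varepsilon(y)\,d\Leb(y),
\end{equation*}
which translates, after polarization in $\xi$, into the Hermitian matrix inequality showing that $(i\partial u\wedge\dbar u)_\varepsilon - i\partial u_\varepsilon\wedge\dbar u_\varepsilon$ is a positive $(1,1)$-form. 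Since convolution by the nonnegative kernel $\chi_\varepsilon$ preserves the coefficient-wise inequality $i\partial u\wedge\dbar u\le T$, one has $(i\partial u\wedge\dbar u)_\varepsilon\le T_\varepsilon$, and the first inequality follows. The mass estimate is a consequence of the identity $T_\varepsilon = T*\chi_\varepsilon$ (valid because $\ddc$ commutes with convolution and the definition of $T_\varepsilon$ is independent of the chosen local potential) combined with Fubini applied to $\int_{\Omega_\varepsilon} T_\varepsilon\wedge\omega^{n-1}$. Combining these estimates with the $L^2$ bound gives $\|u_\varepsilon\|_{*,\Omega_\varepsilon}^2\le \|u\|_{L^2(\Omega)}^2+\|T\|_\Omega = \|u\|_{*,\Omega}^2$.

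For the convergence assertion, weak convergence $u_{\varepsilon_k}\to u$ in $W^{1,2}_*(\Omega')$ follows at once from the uniform norm bound just obtained together with the standard convergence of convolutions in $L^1_\loc$. To upgrade this to nice convergence, fix $x\in\Omega'$, pick a small ball $U_x\Subset\Omega$ around $x$ on which $T=\ddc\varphi$ for some bounded p.s.h.\ function $\varphi$, and set $\varphi_k:=\varphi_{\varepsilon_k}$. By the classical fact, recalled earlier in the paper, that radial nonnegative mollifications of a p.s.h.\ function are themselves p.s.h.\ and monotonically decrease as $\varepsilon\searrow 0$, the sequence $(\varphi_k)_k$ is decreasing and converges to $\varphi$ on $U_x$; moreover, the pointwise inequality just established gives $i\partial u_{\varepsilon_k}\wedge\dbar u_{\varepsilon_k}\le T_{\varepsilon_k}=\ddc\varphi_k$ on $U_x$, which is exactly the nice-convergence condition.

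The step I expect to require the most care is the matrix-valued Cauchy--Schwarz argument, namely turning the scalar inequality above into the Hermitian form inequality $i\partial u_\varepsilon\wedge\dbar u_\varepsilon\le(i\partial u\wedge\dbar u)_\varepsilon$ in a manner that makes the positive $(1,1)$-form structure manifest and keeps track of the fact that the domination passes to $T_\varepsilon$ coefficient by coefficient. Once this pointwise domination is in place, the remainder is a routine combination of Fubini and the monotone decrease of $\varphi_\varepsilon$ under standard regularization.
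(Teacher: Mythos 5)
Your proof is correct and follows essentially the same route as the paper: fix a current $T$ realizing the infimum, prove the pointwise estimate $i\partial u_\varepsilon\wedge\overline\partial u_\varepsilon\le T_\varepsilon$ together with the mass bound $\|T_\varepsilon\|_{\Omega_\varepsilon}\le\|T\|_\Omega$, and deduce nice convergence from the monotone decrease of the regularized local potentials $\varphi_\varepsilon$. The paper cites Vigny's Lemma~5 (and points forward to the proof of Lemma~\ref{l:Sobolev-mean-T}) for the key inequality, whereas you supply a self-contained Cauchy--Schwarz/Jensen argument with the polarization in $\xi$, which is precisely the averaging argument underlying both references.
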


\proof 
It is clear from the definition of $u_\varepsilon$ that 
$\|u_\varepsilon\|_{L^\infty(\Omega_\varepsilon)} \le  
\|u\|_{L^\infty(\Omega)}$ and  $u_\varepsilon \ge 0$ if $u\ge 0$. 
It is also clear that
$\|u_\varepsilon\|_{L^2(\Omega_\varepsilon)}\leq \|u\|_{L^2(\Omega)}$ 
and $u_\varepsilon \to u$ in $L^2(\Omega')$ for every $\Omega'\Subset\Omega$.
Let  $T$  be a closed positive $(1,1)$-current on $\Omega$ with $\|T\|$ 
minimal such that (\ref{e:W12-Def}) holds. 
Denote by $T_\varepsilon$ the standard regularization of $T$.  
By \cite[Lemma 5]{Vigny}) (see also the proof of Lemma \ref{l:Sobolev-mean-T} below), 
we have 
$i \partial u_\varepsilon \wedge \overline \partial u_\varepsilon \le T_\varepsilon$
on $\Omega_\varepsilon.$  So we deduce that 
$u_\varepsilon \in W^{1,2}_*(\Omega_\varepsilon)$ and its 
$*$-norm bounded by $\|u\|_{*,\Omega}$. We conclude that
$u_{\varepsilon_k} \to u$ nicely in $W^{1,2}_*(\Omega')$
as we have seen that if we write $T= \ddc \varphi$, then $\varphi_\varepsilon$ decreases to $\varphi$ 
when $\varepsilon$ decreases to 0. 
This finishes the proof.     
\endproof

Note that Lipschitz functions belong to $W^{1,2}_*$. 
The following result shows that
$W^{1,2}_*$ is closed under basic operations on functions 
and allows us to produce functions in this space, see \cite{DS_decay}. 

\begin{lemma} \label{l:W12-operations} 
Let $\tau: \R \to \R$ be a Lipschitz function and 
$u\in W^{1,2}_*$. Define $u^\pm:=\max\{\pm u,0\}$. 

$(i)$ We have $\tau(u) \in W^{1,2}_*$ and 
$\|\tau(u)\|_* \le c(|\tau(0)|+ \|u\|_*)$ for some constant $c>0$ 
independent of $u.$ In particular, we have 
$u^+, u^-, |u| \in W^{1,2}_*$ and 
$\max\{u_1, u_2\} \in W^{1,2}_*$ if $u_1, u_2 \in W^{1,2}_*.$ 
 
$(ii)$ If $u_k \to u$ weakly in $W^{1,2}_*$, then $\tau(u_k) \to \tau(u)$ 
weakly in $W^{1,2}_*$.  If $u_k \to u$ nicely in $W^{1,2}_*$, 
then $\tau(u_k) \to \tau(u)$ nicely  in $W^{1,2}_*$.
 
$(iii)$ Assume that $\Omega$ is bounded.  Let $v$ be a p.s.h.\  function 
on an open neighborhood $\widetilde\Omega$ of $\overline \Omega$ 
such that $0 \le v \le 1$. Then $v$ belongs to $W^{1,2}_*(\Omega)$ 
and $\|v\|_*$ is bounded by a constant depending only on $\Omega$ 
and $\widetilde\Omega$.
\end{lemma}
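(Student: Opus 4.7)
\emph{Part (i).} The formal identity $i\partial\tau(u)\wedge\dbar\tau(u)=|\tau'(u)|^2\,i\partial u\wedge\dbar u\le L^2\,i\partial u\wedge\dbar u$, where $L$ is the Lipschitz constant of $\tau$, combined with any admissible $i\partial u\wedge\dbar u\le T$, formally yields $i\partial\tau(u)\wedge\dbar\tau(u)\le L^2T$ as currents. I would justify this for $u\in W^{1,2}_*$ and Lipschitz $\tau$ in two steps. First, mollify $\tau$ in the real variable to smooth $\tau_\delta$ with Lipschitz constants still bounded by $L$. The classical $W^{1,2}$-chain rule gives $\partial\tau_\delta(u)=\tau_\delta'(u)\partial u$ almost everywhere, so the pointwise bound upgrades to an $L^1_\loc$ $(1,1)$-form inequality, hence a current inequality; this shows $\tau_\delta(u)\in W^{1,2}_*$ with witness $L^2T$. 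Second, since $\tau_\delta\to\tau$ uniformly on $\R$, one has $\tau_\delta(u)\to\tau(u)$ in $L^2_\loc$, and the Vigny-type closure property recalled just before Lemma \ref{l:nice-CV} (applied with the constant dominating sequence $L^2T$) yields $i\partial\tau(u)\wedge\dbar\tau(u)\le L^2T$. The pointwise bound $|\tau(u)|\le|\tau(0)|+L|u|$ controls the $L^2$-part of $\|\tau(u)\|_*$, and infimizing over $T$ gives $\|\tau(u)\|_*\le c(|\tau(0)|+\|u\|_*)$. The particular cases correspond to $\tau(t)=t^\pm$ or $|t|$; for $\max(u_1,u_2)=\tfrac12(u_1+u_2+|u_1-u_2|)$ one also needs $W^{1,2}_*$ to be closed under sums, which follows from the pointwise inequality $i\partial(u_1+u_2)\wedge\dbar(u_1+u_2)\le 2\,i\partial u_1\wedge\dbar u_1+2\,i\partial u_2\wedge\dbar u_2$ (itself a consequence of the positivity of $i\partial(u_1-u_2)\wedge\dbar(u_1-u_2)$).

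\emph{Part (ii).} The weak version follows directly from (i), which provides a uniform $*$-norm bound on $\tau(u_k)$, and from $|\tau(u_k)-\tau(u)|\le L|u_k-u|$ combined with the $L^2_\loc$ convergence $u_k\to u$ guaranteed by Rellich, both recalled just before Lemma \ref{l:nice-CV}. For nice convergence, if $\varphi_k$ is a local decreasing psh potential with $i\partial u_k\wedge\dbar u_k\le\ddc\varphi_k$, then applying the argument of (i) separately for each $k$ with $T=\ddc\varphi_k$ gives $i\partial\tau(u_k)\wedge\dbar\tau(u_k)\le\ddc(L^2\varphi_k)$, and $(L^2\varphi_k)_k$ is still a decreasing sequence of psh functions with the required properties.

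\emph{Part (iii) and main obstacle.} Clearly $v\in L^2(\Omega)$ with $\|v\|_{L^2}\le\vol(\Omega)^{1/2}$. The identity $\ddc v^2=2(dv\wedge\dc v+v\ddc v)$, together with $v\ge 0$, $\ddc v\ge 0$, and the positivity of $dv\wedge\dc v$, shows both that $v^2$ is psh on $\widetilde\Omega$ and that $dv\wedge\dc v\le\tfrac12\ddc v^2$; equivalently, $i\partial v\wedge\dbar v\le c\ddc v^2$ for an absolute constant $c$. Hence $T:=c\ddc v^2$ is a closed positive current on $\widetilde\Omega$ dominating $i\partial v\wedge\dbar v$, and its mass on $\Omega$ is bounded by the first estimate of Lemma \ref{l:CLN+} applied to $v^2$ on the pair $\overline\Omega\subset\widetilde\Omega$, giving a constant depending only on $\Omega,\widetilde\Omega$ (using $\|v^2\|_{L^\infty}\le 1$). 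The one genuinely delicate step throughout is justifying the chain-rule current inequality in (i) when $u$ is only in $W^{1,2}_*$ rather than $C^1$; this is handled uniformly by the smoothing-in-$\tau$ plus Vigny closure scheme, while every other step is formal manipulation or a direct application of results already recalled in Section \ref{s:Sobolev}.
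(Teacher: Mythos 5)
Your proof is correct and follows essentially the same route as the paper: for (i)--(ii) the paper defers to a cited reference for the chain-rule inequality $i\partial\tau(u)\wedge\dbar\tau(u)\le A^2\,i\partial u\wedge\dbar u$ when $\tau$ is $A$-Lipschitz, and for (iii) it uses the same two facts that $v^2$ is p.s.h.\ and $i\partial v\wedge\dbar v\lesssim \ddc v^2$. You simply supply the mollification-of-$\tau$ plus Vigny-closure justification behind the chain rule (which the paper leaves to the reference) and make explicit the Chern--Levine--Nirenberg mass bound needed in (iii).
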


\proof 
As in \cite[Prop. 4.1 and Lemma 4.2]{DS_decay}, 
we easily obtain (i) and (ii) 
using that $|\tau(t)|\le |\tau(0)|+ A |t|$ and 
$i\partial \tau(u)\wedge \dbar \tau(u) \leq A^2 i\partial u\wedge \dbar u$ 
if $\tau$ is $A$-Lipschitz. We also used here that the maps 
$t \mapsto t^+,t^-, |t|$ are $1$-Lipschitz and 
$\max\{u_1, u_2\}= (u_1- u_2)^{+}+ u_2$.
The assertion (iii) is a direct consequence of (i) by using 
$i\partial v\wedge \dbar v\leq i\ddbar v^2$ and by observing that 
$v^2$ is a p.s.h function. 
\endproof

We will need the following estimates, see also Proposition \ref{p:W12-MA-general} 
below. We note that results related to  the inequality \eqref{l:W12-L2-MA-1}  below
were proved in \cite{Okada, Vigny}.
  
\begin{lemma} \label{l:W12-d-dc-L2} 
Let $u \in W^{1,2}_* \cap \cali{C}^0(\Omega)$ with $\|u\|_* \le 1$ 
and let $v_1, \ldots, v_n$ be p.s.h.\  functions on $\Omega$ with values in $[0,1]$.
Let $K$ be a compact subset of $\Omega$. 
Then there is a constant $c>0$ depending only on $K$ and $\Omega$ such that 
\begin{align}\label{l:W12-L2-MA-1}
\int_K u^2 \ddc v_1 \wedge \cdots \wedge \ddc v_n  \le c,
\end{align}
and if $i \partial u \wedge \overline \partial u \le \ddc \varphi$ 
for some p.s.h.\  function $\varphi$ on $\Omega$ such that $0\leq\varphi\leq 1$, 
then
$$
\int_K u^2 \ddc v_1 \wedge \cdots \wedge \ddc v_n    \le 
c\, \bigg( \int_\Omega u^2 (\ddc \varphi+\omega)^n\bigg)^{1/2^n},
$$
where  $\omega$ is the standard K\"ahler form on $\C^n$.
\end{lemma}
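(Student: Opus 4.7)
The plan is to prove both inequalities by a shared induction on the number of $\ddc v_j$ factors, combining integration by parts with the Cauchy--Schwarz inequality for positive currents. First I reduce to the case $u \ge 0$ via the decomposition $u^2 = (u^+)^2 + (u^-)^2$ and Lemma~\ref{l:W12-operations}; by Lemma~\ref{l:nice-CV} I may further assume that $u$ and the $v_j$ are smooth, recovering the general case through decreasing regularizations. Fix a smooth nonnegative cutoff $\chi$ with $\chi \equiv 1$ on $K$ and compact support in $\Omega$. The workhorse identity, obtained by integrating by parts and exploiting the cancellation between the $2\chi\, du\wedge d^c u$ contributions from $\ddc(\chi u^2)$ and from the re-IBP'd term $\int v_k \chi u\,\ddc u\wedge R$, reads, with $R := \ddc v_1\wedge\cdots\wedge\ddc v_{k-1}\wedge\Theta^{n-k}$ and $\Theta\in\{\omega,\,\ddc\varphi+\omega\}$,
$$\int \chi u^2\, \ddc v_k\wedge R = -2\int \chi u\, dv_k\wedge d^c u\wedge R + \bigl(\text{cutoff terms in $d\chi,\ddc\chi$}\bigr).$$

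For the first inequality I set $M_k := \sup \int \chi u^2\, \ddc v_1\wedge\cdots\wedge \ddc v_k\wedge\omega^{n-k}$, the supremum being over admissible $v_j\in[0,1]$, and proceed by induction on $k$. The base case $M_0 \le c\|u\|_{L^2}^2 \le c$ is immediate. For the inductive step, Cauchy--Schwarz yields
$$\Bigl|\int \chi u\, dv_k\wedge d^c u\wedge R\Bigr|^2 \le \int \chi\, du\wedge d^c u\wedge R \cdot \int \chi u^2\, dv_k\wedge d^c v_k\wedge R.$$
The first factor is bounded via $du\wedge d^c u \le cT$ (coming from $\|u\|_*\le 1$) and Lemma~\ref{l:CLN+}, while the second is bounded by $\tfrac{1}{2}\int \chi u^2\ddc(v_k^2)\wedge R\le M_k$, using $dv_k\wedge d^c v_k \le \tfrac{1}{2}\ddc(v_k^2)$ (valid because $v_k\ge 0$ p.s.h.\ implies $v_k^2$ is again p.s.h.\ and lies in $[0,1]$). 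The cutoff remainders are dominated by $M_{k-1}$ and constants via a further Cauchy--Schwarz. One obtains a bootstrap $M_k\le c_1\sqrt{M_k}+c_2$ whose finiteness input is guaranteed by $u\in C^0$, and it closes to $M_k\le$ const.

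For the second inequality I carry out the same induction with $A_k := \int \chi u^2\,\ddc v_1\wedge\cdots\wedge \ddc v_k\wedge (\ddc\varphi+\omega)^{n-k}$, targeting the recursion $A_k \le c\sqrt{A_{k-1}}$, which iterates to $A_n \le c'A_0^{1/2^n}$. The critical change is to regroup Cauchy--Schwarz using the symmetry $u\, dv_k\wedge d^c u = u\, du\wedge d^c v_k$:
$$\Bigl|\int \chi u\, du\wedge d^c v_k\wedge R\Bigr|^2 \le \int \chi u^2\, du\wedge d^c u\wedge R \cdot \int \chi\, dv_k\wedge d^c v_k\wedge R.$$
The stronger hypothesis $i\partial u\wedge\dbar u \le \ddc\varphi$ now gives $du\wedge d^c u \le c\ddc\varphi$, so the first factor becomes $\le c\int \chi u^2\,\ddc\varphi\wedge R \le cA_{k-1}$ (using $\ddc\varphi\wedge R \le \ddc v_1\wedge\cdots\wedge \ddc v_{k-1}\wedge (\ddc\varphi+\omega)^{n-k+1}$), while the second factor is bounded by Lemma~\ref{l:CLN+}. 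The cutoff remainders contribute $\le c A_{k-1} + c\sqrt{A_{k-1}}$; the main obstacle is preventing the linear $A_{k-1}$ term from spoiling the $\sqrt{A_{k-1}}$ scaling. My plan is to invoke the already-established first inequality: expanding $(\ddc\varphi+\omega)^{n-k+1}$ as a finite sum of wedges of $\ddc$'s of bounded p.s.h.\ functions (including $\omega$ itself, written locally as $\ddc$ of a bounded p.s.h.\ potential on $\Omega$) shows $A_{k-1} \le C_0$ for a constant $C_0$ depending only on $K,\Omega$, so that $cA_{k-1} \le c\sqrt{C_0}\cdot\sqrt{A_{k-1}}$ gets absorbed into the main term. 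The recursion $A_k \le c''\sqrt{A_{k-1}}$ then iterates over $k=1,\ldots,n$ to produce the exponent $1/2^n$.
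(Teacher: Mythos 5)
Your overall scheme is the same as the paper's: regularize $u$, induct on the number of $\ddc v_j$ factors, integrate by parts against a cutoff, apply Cauchy--Schwarz with the grouping $(du\wedge\dc u)\cdot(u^2\,dv_k\wedge\dc v_k)$ for the first bound (closing a bootstrap for a supremum over admissible $v_j$, with $v_k^2$ again admissible) and with the regrouping $(u^2\,du\wedge\dc u)\cdot(dv_k\wedge\dc v_k)$ for the second, and absorb the linear terms in the second part by using the first inequality to see that the quantities $A_{k-1}$ are a priori bounded --- this matches the paper's reduction ``$Q$ is bounded, hence $Q\lesssim Q^{1/2^l}$''. The recursion $A_k\le c\sqrt{A_{k-1}}$ iterated to exponent $1/2^n$ is just a repackaging of the paper's induction statement.

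The one step that does not work as written is your treatment of the cutoff remainders. The boundary term produced by the integration by parts is $\int u^2\, d\chi\wedge\dc v_k\wedge R$, and ``a further Cauchy--Schwarz'' bounds it by $\big(\int u^2\,d\chi\wedge\dc\chi\wedge R\big)^{1/2}\big(\int_{\supp d\chi}u^2\,dv_k\wedge\dc v_k\wedge R\big)^{1/2}$. The second factor is again a level-$k$ quantity, but it lives on $\supp d\chi$, where $\chi<1$: it is not dominated by $M_k$ (which carries the weight $\chi$), it is not an $M_{k-1}$-type quantity, and it cannot be bounded by a constant either, since that would require $\|u\|_{L^\infty(\supp\chi)}$, whereas the whole point of the lemma is that $c$ depends only on $K,\Omega$; nor may you invoke $\Cc^2$-norms of regularized $v_j$, which blow up as the smoothing parameter tends to $0$. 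The same defect reappears in the second part, where you assert the cutoff terms contribute $\le cA_{k-1}+c\sqrt{A_{k-1}}$. The paper resolves this by a preliminary localization you have omitted: each $v_j$ is replaced by $\max\big(v_j, A(\|x\|^2-1)\big)$, which does not change the integrals near $K$ but forces every $v_j$ to equal $A(\|x\|^2-1)$, hence to be smooth with a uniform $\Cc^2$ bound, on a neighborhood of $\supp d\tau$; then $d\tau\wedge\dc v_k\wedge R$ is a bounded form there and the boundary term is $\lesssim\|u\|_{L^2}^2$ (resp. $\lesssim Q$ in the second part). Alternatively, one can use the symmetry $d\chi\wedge\dc v_k=dv_k\wedge\dc\chi$ against the $(n-1,n-1)$-current $R$ and integrate by parts once more so that no derivative falls on $v_k$, then conclude with $0\le v_k\le1$, $du\wedge\dc u\le T$ and the level-$(k-1)$ bound on a slightly larger compact set. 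Some such device must be supplied before the bootstraps $M_k\lesssim 1+\sqrt{M_k}$ and $A_k\lesssim\sqrt{A_{k-1}}$ are legitimate.
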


\proof  By regularization, we can assume that $u$ is smooth.
The point here is that the constant $c$ is independent of $u$.   
Let $T$ be a closed positive $(1,1)$-current so that $\|T\| \le 1$  and 
$$i \partial u \wedge \overline \partial u \le T.$$
In order to get  \eqref{l:W12-L2-MA-1} it is enough to prove by induction on $0 \le l \le n$ that 
$$\int_K u^2  \ddc v_1 \wedge \cdots \wedge \ddc v_l \wedge \omega^{n-l}\leq c$$
for some constant $c$ depending only on $K$ and $\Omega$. 
The case where $l=0$ is clear, see the beginning of this section. 
Assume this property for $l-1$ instead of $l$ and for every $K$. We now prove it for $l$.

Since the problem is local, we can assume that $\Omega$ 
is the unit ball in $\C^n$. Fix a compact set $K$ in $\Omega$. 
As in the proof of Lemma \ref{l:CLN+}, we can assume that  all 
$v_j$ are smooth outside a compact set $L$ with $K\Subset L\Subset\Omega$ 
and $\|v_j\|_{\Cc^2(\Omega\setminus L)} \leq 1$. 
Fix a smooth function $0\leq \tau\leq 1$ with $\tau=1$ on a 
neighbourhood of $L$ such that $\tau$ is supported by a compact set $K'$ in $\Omega$. 
Define 
$$I_{\rm max}:= \sup_{v'_j} \int_\Omega \tau u^2  \ddc v'_1 
\wedge \cdots \wedge \ddc v'_l \wedge \omega^{n-l},$$
where the supremum is taken over all  p.s.h.\  functions $v'_j$ 
with $0 \le v'_j \le 1$ and $\|v'_j\|_{\Cc^2(\Omega\setminus L)} \leq 1$. 
Define
\begin{align}\label{ine-auynaplL2norm}
I:= \int_\Omega \tau u^2  \ddc v_1 \wedge \cdots \wedge 
\ddc v_l \wedge \omega^{n-l} \quad \text{and} \quad 
R:= \ddc v_2 \wedge \cdots \wedge \ddc v_l \wedge \omega^{n-l}.
\end{align}
Since $u$ is smooth, we can perform integration by parts to obtain
$$I= -\int_\Omega u^2 d \tau \wedge \dc v_1 \wedge R-
2\int_\Omega \tau u d u  \wedge \dc v_1  \wedge R.$$
Denote by $I_1, I_2$ the first and second integral 
in the right-hand side of the last equality. 

Observe that $d \tau \wedge \dc v_1 \wedge R$ 
is a bounded form because $d\tau$ vanishes outside 
$K'\setminus L$ and $\|v_j\|_{\Cc^2(K'\setminus L)}\leq 1$. 
Therefore, $|I_1|$ is bounded by a constant because $u$ has bounded 
$L^2(\Omega)$-norm. 
For $I_2$, since $i\partial u\wedge \dbar u\leq T$, we obtain
$$d u \wedge \dc u \wedge R=
\pi^{-1} i\partial u \wedge \overline \partial u \wedge R  \le R\wedge T.$$
This, together with the Cauchy-Schwarz inequality and Lemma 
\ref{l:CLN+}, give 
\begin{align*}
|I_{2}|  &\le \bigg(\int_\Omega \tau d u  \wedge 
\dc u \wedge R\bigg)^{1/2}\bigg(\int_\Omega \tau u^2 d v_1  
\wedge \dc v_1 \wedge R\bigg)^{1/2} \\
&\le \bigg(\int_\Omega \tau R\wedge T \bigg)^{1/2} 
\bigg(\int_\Omega \tau u^2  \ddc v_1^2 \wedge R\bigg)^{1/2} \\
&\le\bigg(\int_\Omega \tau u^2  \ddc v_1^2 \wedge R\bigg)^{1/2} \lesssim  
(I_{\rm max})^{1/2}
\end{align*}
(for the last inequality, we used that $v_1^2$ is p.s.h.\ , $0\leq v_1^2\leq 1$ 
and $\|v_1^2\|_{\Cc^2(\Omega\setminus L)}$ is bounded). 
It follows  that 
$|I| =|I_1+2I_2|\lesssim 1+ (I_{\rm max})^{1/2}$
for every $v_j$ as above. Therefore, we deduce from the definition of 
$I_{\rm max}$ that $I_{\rm max} \lesssim 1+ (I_{\rm max})^{1/2}$.
Thus, $I_{\rm max}$ is bounded by a constant 
and the inequality \eqref{l:W12-L2-MA-1} is proved. 

We now prove the second inequality in the lemma. 
Using \eqref{l:W12-L2-MA-1}, we can reduce slightly $\Omega$ 
and assume that 
$$Q:= \int_\Omega u^2 (\ddc \varphi+\omega)^n$$
is bounded by a constant. It follows that $Q\lesssim Q^{1/2^l}$ for 
$0\leq l\leq n$. We can now follow the main lines of the proof of
 \eqref{l:W12-L2-MA-1} but we replace each 
 $\omega$ by $\ddc\varphi+\omega$. Denote by $I', R', I'_1, I'_2$ 
 the quantities defined as $I, R, I_1, I_2$ but we replace $\omega$ by 
 $\ddc\varphi+\omega$. 
With the same arguments, we get $|I_1'|\lesssim Q\lesssim Q^{1/2^l}$. 
The estimate of $|I_2'|$ is slightly different.
By Cauchy-Schwarz inequality, the inequality 
$i du \wedge \dc u \leq \ddc \varphi$, Lemma \ref{l:CLN+} 
and the induction hypothesis, we get
\begin{align*}
|I'_{2}|  &\le \bigg(\int_\Omega \tau u^2 d u  \wedge \dc u 
\wedge R'\bigg)^{1/2}\bigg(\int_\Omega \tau d v_1  \wedge 
\dc v_1 \wedge R'\bigg)^{1/2}\\
\nonumber
&\lesssim \bigg(\int_\Omega \tau u^2  \ddc \varphi \wedge R' \bigg)^{1/2} 
\lesssim (Q^{1/2^{l-1}})^{1/2},
\end{align*}
where we bound $\int_\Omega \tau d v_1  \wedge 
\dc v_1 \wedge R'$ by using Lemma \ref{l:CLN+}  and the fact that $0 \le v_j, \phi \le 1$.  We conclude that $|I'|=|I_1'+2I_2'|\lesssim Q^{1/2^{l}}$ which ends the proof of the lemma.
\endproof

\begin{lemma} \label{l:W12-L2-limit-0}
Let $(u_k)_k\subset W^{1,2}_*\cap \cali{C}^0(\Omega)$ be a sequence 
converging to $0$ weakly in $W^{1,2}_*$. 
For each $j=1,\ldots,n$, let $(v_{j,k})_k$ be a sequence of bounded p.s.h.\ functions 
on $\Omega$ decreasing to some bounded p.s.h.\  function $v_j$ when $k$ tends to infinity. 
Then we have for every compact set $K$ in $\Omega$
$$\lim_{k \to \infty}\int_K  |u_k| \ddc v_{1,k} \wedge \cdots \wedge \ddc v_{n,k}=0.$$
In particular, if we assume moreover that $(u_k)_k$ is uniformly bounded, then 
$$\lim_{k \to \infty}\int_K  u_k^2 \ddc v_{1,k} \wedge \cdots \wedge \ddc v_{n,k}=0.$$
\end{lemma}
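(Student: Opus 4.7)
The plan is a two-step reduction followed by an inductive integration-by-parts argument.

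First I reduce to the case $u_k\ge 0$: by Lemma \ref{l:W12-operations}(i)(ii) with $\tau(t)=|t|$, $|u_k|\in W^{1,2}_*$ has $*$-norm controlled by $\|u_k\|_*$ and converges to $0$ weakly in $W^{1,2}_*$. Next, the first statement of the lemma is reduced to the ``in particular'' second statement by truncation: for each $M>0$, write $u_k=\min(u_k,M)+(u_k-M)^+$. On $\{u_k>M\}$ one has $u_k\le u_k^2/M$, so Lemma \ref{l:W12-d-dc-L2} yields $\int_K(u_k-M)^+\mu_k\le M^{-1}\int_K u_k^2\mu_k\le C/M$. The truncation $\phi_k^M:=\min(u_k,M)$ lies in $W^{1,2}_*\cap L^\infty$, is bounded by $M$, and converges to $0$ weakly in $W^{1,2}_*$ (Lemma \ref{l:W12-operations}(ii)), so the second statement applied to $\phi_k^M$ gives $\int_K(\phi_k^M)^2\mu_k\to 0$, hence $\int_K\phi_k^M\,\mu_k\to 0$ by Cauchy-Schwarz and $\mu_k(K)\le C$. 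Sending first $k\to\infty$ then $M\to\infty$ completes the reduction.

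For the ``in particular'' statement, assume $0\le u_k\le M$. I argue by induction on $l\in\{0,\ldots,n\}$ that
$$A_{l,k}:=\int_K u_k^2\,\ddc v_{1,k}\wedge\cdots\wedge\ddc v_{l,k}\wedge\omega^{n-l}\longrightarrow 0\quad\text{as }k\to\infty.$$
The base case $l=0$ is Rellich's theorem. For $l-1\Rightarrow l$, following the proof of Lemma \ref{l:W12-d-dc-L2}, the $\max$-modification from the proof of Lemma \ref{l:CLN+} lets me assume each $v_{j,k}$ agrees outside a fixed compact $L\Subset\Omega$ with a common quadratic function, hence is smooth there with uniformly bounded $\Cc^2$-norm. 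Choose $\tau\in \Cc_c^\infty(\Omega)$ with $\tau\equiv 1$ on $L$ and set $R_k:=\ddc v_{1,k}\wedge\cdots\wedge\ddc v_{l-1,k}\wedge\omega^{n-l}$. After regularizing $u_k$ via Lemma \ref{l:nice-CV} and passing to the limit, integration by parts gives $A_{l,k}\le|I_1|+2|I_2|$ with $I_1:=-\int u_k^2\,d\tau\wedge\dc v_{l,k}\wedge R_k$ and $I_2:=-\int\tau u_k\,du_k\wedge\dc v_{l,k}\wedge R_k$. Since $d\tau\wedge\dc v_{l,k}\wedge R_k$ is a smooth bounded $(n,n)$-form on $\supp d\tau$, one obtains $|I_1|\le C\|u_k\|^2_{L^2(\supp d\tau)}\to 0$ by the base case.

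The main obstacle is estimating $|I_2|$. Cauchy-Schwarz yields
$$|I_2|^2\le\Big(\int\tau\,du_k\wedge\dc u_k\wedge R_k\Big)\Big(\int\tau u_k^2\,dv_{l,k}\wedge\dc v_{l,k}\wedge R_k\Big),$$
whose first factor is bounded by $\pi^{-1}\int\tau T_k\wedge R_k\le C$ uniformly via Lemma \ref{l:CLN+}. The second factor, via $\ddc v^2=2(dv\wedge\dc v+v\ddc v)$ and the positivity of $v_{l,k}\ddc v_{l,k}$, is dominated by $\tfrac12\int\tau u_k^2\,\ddc v_{l,k}^2\wedge R_k$. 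Since $v_{l,k}^2$ is again p.s.h.\ in $[0,1]$ and decreases to $v_l^2$, this is another instance of $A_{l,k}$ (with the new admissible sequence $(v_{l,k}^2)_k$), not a level-$(l-1)$ quantity, so the naive induction does not close. I overcome this by iterating the Cauchy-Schwarz bound: setting $B_{k,j}:=\int\tau u_k^2\,\ddc v_{l,k}^{2^j}\wedge R_k$, one has $B_{k,j}\le\varepsilon_{k,j}+c\sqrt{B_{k,j+1}}$ with $\varepsilon_{k,j}\to 0$ as $k\to\infty$ for each fixed $j$ (absorbing the growth of $\|v_{l,k}^{2^j}\|_{\Cc^2(\supp d\tau)}$ against $\|u_k\|^2_{L^2}$); combined with the uniform bound $B_{k,j}\le C$ from Lemma \ref{l:W12-d-dc-L2} and a careful order of limits (first $k\to\infty$ for a fixed $j$, then $j\to\infty$), this yields $A_{l,k}=B_{k,0}\to 0$.
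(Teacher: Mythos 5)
Your two preliminary reductions (to $u_k\ge 0$, and the truncation argument trading the first statement for the uniformly bounded case via Lemma \ref{l:W12-d-dc-L2}) are fine, as is the estimate of $I_1$. The gap is in the treatment of $I_2$, i.e.\ precisely where the whole difficulty of the lemma sits. Your iterated Cauchy--Schwarz recursion reads $B_{k,j}\le \varepsilon_{k,j}+c\sqrt{B_{k,j+1}}$, where $c$ is uniform in $j,k$ but not small: it is governed by $\int_\Omega \tau\, T_k\wedge R_k$, and weak convergence $u_k\to 0$ in $W^{1,2}_*$ gives no decay whatsoever of the dominating currents $T_k$. Setting $\beta_j:=\limsup_{k\to\infty}B_{k,j}$, your inequalities (after killing the $\varepsilon_{k,j}$ for each fixed $j$) give only $\beta_j\le c\sqrt{\beta_{j+1}}$ together with $\beta_j\le C$, hence $\beta_0\le c^{2-2^{1-J}}C^{2^{-J}}\to c^2$ as $J\to\infty$. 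This is a uniform bound, not convergence to zero; indeed the system of inequalities admits the stationary solution $\beta_j\equiv c^2$, so no ``careful order of limits'' can extract $\beta_0=0$ from it. In effect the iteration reproves the boundedness already contained in Lemma \ref{l:W12-d-dc-L2} (where the same square-root loss is harmless because one only needs $I_{\rm max}\lesssim 1+I_{\rm max}^{1/2}$), but it cannot produce decay.

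A telling symptom is that your argument never uses the hypothesis that $(v_{j,k})_k$ \emph{decreases} to $v_j$; that monotonicity is exactly where the missing smallness comes from in the paper's proof. There one splits $\ddc v_{l,k}=\ddc v_{l,k,\varepsilon}+\ddc\big(v_{l,k}-v_{l,k,\varepsilon}\big)$ with $v_{l,k,\varepsilon}$ the standard regularization: the first piece satisfies $\ddc v_{l,k,\varepsilon}\lesssim \varepsilon^{-2}\omega$ and is absorbed by the induction hypothesis, while for the second piece the problematic Cauchy--Schwarz factor is $\int_\Omega \tau\, d\big(v_{l,k}-v_{l,k,\varepsilon}\big)\wedge \dc\big(v_{l,k}-v_{l,k,\varepsilon}\big)\wedge R_k$, which after one further integration by parts tends to $0$ (first $k\to\infty$, then $\varepsilon\to 0$) by the Bedford--Taylor continuity of wedge products along decreasing sequences of p.s.h.\ functions. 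To repair your proof you would have to replace the $v_{l,k}^{2^j}$ iteration by such a regularization/difference splitting, or find some other mechanism that genuinely exploits the monotone convergence of the potentials.
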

\proof
By Lemma \ref{l:CLN+}, the mass of 
$\ddc v_{1,k} \wedge \cdots \wedge \ddc v_{n,k}$ on $K$ 
is bounded by a constant. Therefore, the second assertion of the lemma 
is a direct consequence of the first one. We prove now the first assertion. 

By replacing $u_k$ with $|u_k|$, we can assume that $u_k\geq 0$, 
see Lemma \ref{l:W12-operations}. Then, using the standard regularization, 
we can assume that $u_k$ is smooth.
Since the problem is local, as in the proof of Lemma \ref{l:CLN+}, we can assume that 
$\|v_{j,k}\|_{\Cc^2(\Omega\setminus L)}\leq 1$ for some compact set $L$ 
such that $K\Subset L\Subset\Omega$. 
We can also assume that 
$$i\partial u_k\wedge \dbar u_k\leq T_k$$
for some closed positive $(1,1)$-current $T_k$ such that $\|T_k\|\leq 1$.
Choose a smooth nonnegative function $\tau$ with compact support 
in $\Omega$ which is equal to 1 in a neighbourhood of $L$. It is enough to
prove by induction on $l$ that
$$\lim_{k \to \infty}\int_{\Omega} \tau  u_k \ddc v_{1,k} 
\wedge \cdots \wedge \ddc v_{l,k} \wedge \omega^{n-l}=0.$$
The case where $l=0$ is clear because $u_k \to 0$ in $L^2_{loc}$ 
by hypothesis, see the beginning of this section. 
Assume that the desired property holds for $l-1$ instead of $l$. We need to prove it for $l$.  

Define
$$R_k:= \ddc v_{2,k} \wedge \cdots \wedge \ddc v_{l,k} 
\wedge \omega^{n-l} \quad \text{and} \quad 
R:= \ddc v_{2} \wedge \cdots \wedge \ddc v_{l} \wedge \omega^{n-l}.$$
Let $v_{1,k,\varepsilon}$ and $v_{1, \varepsilon}$ 
be the standard regularizations of $v_{1,k}$ and $v_1$ for $\varepsilon>0$ 
a small constant (since $\tau$ has compact support, we can reduce $\Omega$ 
slightly in order to avoid problems near the boundary of $\Omega$). 
Define $v'_{1,k,\varepsilon}:= v_{1,k}- v_{1,k,\varepsilon}$. 
Observe that when $k$ tends to infinity, 
$v_{1,k,\varepsilon}$ decreases to $v_{1,\varepsilon}$ and hence 
$v'_{1,k,\varepsilon}$ tends to $v'_{1,\varepsilon}:=v_1- v_{1, \varepsilon}$ pointwise.
Write 
$$\int_{\Omega}\tau u_k \ddc v_{1,k} \wedge R_k= 
\int_{\Omega}\tau u_k \ddc v_{1,k,\varepsilon} \wedge R_k +
\int_{\Omega}\tau  u_k \ddc v'_{1,k, \varepsilon} \wedge R_k.$$
As $k$ tends to infinity, the first term in the last sum converges to 
$0$ by induction hypothesis. 
Denote the second term by $I_k(\varepsilon)$. It remains to check that 
$$\lim_{\varepsilon\to 0} \limsup_{k\to\infty} I_k(\varepsilon)=0.$$
By integration by parts, we get 
$$ I_k(\varepsilon)  =-\int_{\Omega}  u_k d \tau \wedge \dc v'_{1,k,\varepsilon}  
\wedge R_k - \int_{\Omega}\tau  d u_k  \wedge \dc v'_{1,k,\varepsilon} \wedge R_k.
$$
The first integral in the last line tends to 0 when $k$ tends to infinity 
because $u_k\to 0$ in $L^2_\loc(\Omega)$, 
$d\tau$ vanishes outside $L$ and 
$d \tau \wedge \dc v'_{1,k,\varepsilon}  \wedge R_k$ is 
bounded uniformly on $\Omega\setminus L$.
The second integral, denoted by $J_k(\varepsilon)$, 
satisfies the following estimates, thanks to the Cauchy-Schwarz inequality
\begin{eqnarray*}
|J_k(\varepsilon)|^2 &\le & \bigg(\int_\Omega \tau   d u_k \wedge \dc u_k  
\wedge R_k\bigg)  \bigg(\int_\Omega \tau  d v'_{1,k,\varepsilon} 
\wedge  \dc v'_{1,k,\varepsilon} \wedge R_k\bigg)\\
\nonumber
& \lesssim &\bigg(\int_\Omega \tau  T_k\wedge R_k \bigg)  
\bigg(\int_\Omega \tau   d v'_{1,k,\varepsilon} \wedge  \dc v'_{1,k,\varepsilon} 
\wedge R_k\bigg).
\end{eqnarray*}
The first factor in the last line is uniformly bounded, thanks to Lemma \ref{l:CLN+} 
and the fact that $\|T_k\| \leq1$.  
By integration by parts, the second factor is equal to
$$-\int_\Omega   v'_{1,k,\varepsilon} d\tau \wedge  \dc v'_{1,k,\varepsilon} 
\wedge R_k - \int_\Omega \tau    v'_{1,k,\varepsilon} \wedge  
\ddc v'_{1,k,\varepsilon} \wedge R_k.$$

Taking $k\to\infty$ and then $\varepsilon\to 0$, 
we see that the first term tends to 0 because $d\tau$ vanishes outside 
$L$ and $v'_{1,k,\varepsilon} d\tau \wedge  \dc v'_{1,k,\varepsilon} \wedge R_k$ 
is smooth and tends uniformly to 0 on $\Omega\setminus L$ 
thanks to properties of the convolution operator.
By continuity of wedge-product described at the beginning of this section, 
when $k$ tends to infinity, the second term tends to
$$ \int_\Omega \tau    v'_{1,\varepsilon} \wedge  \ddc v'_{1,\varepsilon} \wedge R.$$
Finally, when $\varepsilon$ decreases to 0, since $v_{1,\varepsilon}$ decreases to $v_1$, 
the last expression tends to 0, again, by using the continuity of the wedge-product.
This ends the proof of the lemma.
\endproof

We have the following result which will be extended later in Proposition \ref{p:W12-MA-general} to
every $u \in W^{1,2}_*$ which is not necessary continuous.
 
\begin{lemma}  \label{l:W12-L1-pot}
Let $\tau$ be a smooth function with  compact support in $\Omega$. 
Then there is a constant $c>0$ such that for every  
$u \in W^{1,2}_* \cap \cali{C}^0(\Omega)$ with $\|u\|_* \le 1$ 
and every p.s.h.\  functions $v_1, \ldots, v_{n-1}, w_1, w_2$ on 
$\Omega$ with values in $[0,1]$,  we have
$$\bigg| \int_\Omega \tau u \ddc (w_1- w_2)  \wedge 
\ddc v_1 \wedge \cdots \wedge \ddc v_{n-1} \bigg |  \le 
c \| w_1- w_2\|^{1/2}_{L^\infty(\Omega)}.$$
\end{lemma}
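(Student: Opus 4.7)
Set $w := w_1 - w_2$ and $R := \ddc v_1 \wedge \cdots \wedge \ddc v_{n-1}$, and write $I$ for the integral to be bounded. Standard regularization (Lemma \ref{l:nice-CV}) reduces the problem to the case where $u$ is smooth, since the norm $\|\cdot\|_*$ is not increased and the desired estimate is preserved in the limit. With $u$ smooth, because $R$ is closed and $\tau$ has compact support, Stokes's theorem applied to $\tau u\,\dc w \wedge R$ yields
$$
I \;=\; -\int_\Omega u\,d\tau \wedge \dc w \wedge R \;-\; \int_\Omega \tau\,du \wedge \dc w \wedge R \;=:\; -I_1 - I_2.
$$

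The key tool is the weighted Cauchy--Schwarz inequality
$$
\left|\int_\Omega \chi\, h\,d\phi \wedge \dc\psi \wedge R\right|^2 \;\le\; \int_\Omega \chi\, h^2\, d\phi \wedge \dc\phi \wedge R \cdot \int_\Omega \chi\, d\psi \wedge \dc\psi \wedge R,
$$
valid for any nonnegative smooth compactly supported cutoff $\chi$ and real smooth functions $h,\phi,\psi$; it follows from Cauchy--Schwarz for the positive semi-definite Hermitian form $(\alpha,\beta) \mapsto \int_\Omega \chi\, i\alpha^{(1,0)} \wedge \overline{\beta^{(1,0)}} \wedge R$ on smooth $1$-forms. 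I fix $\chi$ with $\chi = 1$ on $\supp\tau$.

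Applied to $I_2$ with $(h,\phi,\psi) = (\tau,u,w)$ and using $\chi\tau^2 = \tau^2$, this yields $|I_2|^2 \le \int_\Omega \tau^2\, du \wedge \dc u \wedge R \cdot \int_\Omega \chi\, dw \wedge \dc w \wedge R$. For the first factor, the hypothesis $i\partial u \wedge \bar\partial u \le T$ with $\|T\|\le 1$ gives $du \wedge \dc u \le \pi^{-1} T$, and Lemma \ref{l:CLN+} (first estimate) bounds $\|T \wedge R\|_{\supp\tau}$ by a constant, so this factor is $O(1)$. The second factor is $O(\|w\|_{L^\infty})$ by Lemma \ref{l:CLN+} (second estimate). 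Hence $|I_2| \le c\,\|w\|_{L^\infty}^{1/2}$. Applied to $I_1$ with $(h,\phi,\psi) = (u,\tau,w)$ it gives $|I_1|^2 \le \int_\Omega u^2\, d\tau \wedge \dc\tau \wedge R \cdot \int_\Omega \chi\, dw \wedge \dc w \wedge R$. Since $d\tau \wedge \dc\tau$ is a smooth nonnegative $(1,1)$-form bounded above by a constant multiple of the standard K\"ahler form $\omega$, the first factor is controlled by a constant times $\int_{\supp\tau} u^2\,\omega\wedge R$, which is $O(1)$ by Lemma \ref{l:W12-d-dc-L2} after introducing an auxiliary bounded smooth p.s.h.\ function $v_n$ (e.g.\ a suitably normalized $\|x\|^2$) with $\ddc v_n$ dominating $\omega$ on a neighborhood of $\supp\tau$. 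The second factor is again $O(\|w\|_{L^\infty})$, giving $|I_1| \le c\,\|w\|_{L^\infty}^{1/2}$, and combining with the bound for $I_2$ yields $|I| \le c\,\|w_1 - w_2\|_{L^\infty}^{1/2}$.

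The main technical point is the correct setup of the weighted Cauchy--Schwarz inequality on $1$-forms: once this is in place, the remaining estimates reduce directly to the Chern--Levine--Nirenberg-type bounds of Lemma \ref{l:CLN+} and the weighted $L^2$-estimate of Lemma \ref{l:W12-d-dc-L2}. A feature of the plan worth noting is that the two ``first factors'' $\int \tau^2 du\wedge \dc u\wedge R$ and $\int u^2 d\tau\wedge \dc\tau\wedge R$ are both bounded uniformly, while the linear dependence on $\|w_1-w_2\|_{L^\infty}$ lives entirely in the common integral $\int \chi\, dw \wedge \dc w \wedge R$, so that the square-root exponent emerges naturally from taking the square root in Cauchy--Schwarz.
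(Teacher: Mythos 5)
Your proposal is correct and follows essentially the same route as the paper's proof: the same integration by parts into the two terms $I_1$, $I_2$, the same Cauchy--Schwarz splitting, with the factors $\|du\wedge\dc u\wedge R\|$ and $\|u^2\,d\tau\wedge\dc\tau\wedge R\|$ bounded by Lemmas \ref{l:CLN+} and \ref{l:W12-d-dc-L2} and the factor $\|dw\wedge\dc w\wedge R\|$ bounded by $c\,\|w\|_{L^\infty}$ via the second estimate of Lemma \ref{l:CLN+}. The only cosmetic difference is that you make the weighted Cauchy--Schwarz form and the auxiliary p.s.h.\ function dominating $\omega$ explicit, which the paper leaves implicit.
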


\proof 
Let $T$ be as in the proof of Lemma \ref{l:W12-d-dc-L2}. 
As in that lemma, by regularisation,  we can assume $u$ smooth.
Define $w:= w_1- w_2$ and 
$R:= \ddc v_1 \wedge \cdots \wedge \ddc v_{n-1}$.  
 Let $U\Subset \Omega$ be an open set containing 
 the support of $\tau$. By integration by parts, we get
$$\int_\Omega \tau u \ddc w  \wedge R= 
- \int_\Omega u d \tau \wedge \dc w  \wedge R-
\int_\Omega \tau du  \wedge \dc w  \wedge R.$$
Denote by $I_1, I_2$ the first and second term in the 
right-hand side of the last equality.   By Cauchy-Schwarz inequality, 
\eqref{e:d-dc-def} and  Lemma \ref{l:CLN+}, one has
\begin{align*}
|I_2| \lesssim \| du \wedge \dc u \wedge R\|_{U}^{1/2} \| d w \wedge 
\dc w \wedge R\|_{U}^{1/2} \lesssim  \| R \wedge T\|_U^{1/2} 
\|w\|^{1/2}_{L^\infty(\Omega)} \lesssim  \|w\|^{1/2}_{L^\infty(\Omega)}.  
\end{align*}
Similarly,
$$|I_1| \le  \| u^2 d \tau \wedge \dc \tau \wedge R\|_{U}^{1/2} \| d w 
\wedge \dc w \wedge R\|_{U}^{1/2} \lesssim  \|w\|^{1/2}_{L^\infty(\Omega)}$$
by Lemmas \ref{l:CLN+} and \ref{l:W12-d-dc-L2}. 
The result follows.
\endproof

In general,  the
potentials of a current $T$ satisfying \eqref{e:W12-Def} 
are not locally bounded. We will introduce below an operator which produces, 
from a bounded function $u \in W^{1,2}_*$, new bounded functions 
in $W^{1,2}_*$ such that their associated $(1,1)$-currents  have bounded  potentials. 
 
\begin{lemma} \label{l:W12-cutoff}  
Let $u \in W^{1,2}_*$ be such that $|u| \le 1$ and let $T$ be as in
\eqref{e:W12-Def}. Assume moreover that  $T= \ddc \varphi$ 
for some negative p.s.h.\  function $\varphi$ on $\Omega$. 
Define $\varphi_N:= \max\{\varphi, -N\}+N$ for a constant $N\geq 1$. 
Then the function $w:= \varphi_N u$ belongs to $W_*^{1,2}(\Omega')$ 
for every open set $\Omega'\Subset\Omega$ and it satisfies
$$i \partial w \wedge \overline \partial w \le 
2\pi N^2 \ddc (\varphi_N^2+ \varphi_{N+1}).$$ 
\end{lemma}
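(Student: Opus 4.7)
The plan is to derive the stated current inequality pointwise under the simplifying assumption that $u$ and $\varphi$ are smooth, and then to pass to the limit using the standard regularizations from Lemma~\ref{l:nice-CV}. Since $\varphi_N$ and $\varphi_{N+1}$ are bounded p.s.h.\ functions, the right-hand side $2\pi N^2\ddc(\varphi_N^2+\varphi_{N+1})$ is a closed positive $(1,1)$-current of locally bounded mass on $\Omega'$; once the pointwise bound is established, the membership $w\in W^{1,2}_*(\Omega')$ is immediate from the definition of the space.

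In the smooth setting, Leibniz gives $\partial w=\varphi_N\partial u+u\,\partial\varphi_N$ and similarly for $\overline\partial w$. Expanding $i\partial w\wedge\overline\partial w$ and using the positivity of
$$0\le i\bigl(u\,\partial\varphi_N-\varphi_N\partial u\bigr)\wedge\bigl(u\,\overline\partial\varphi_N-\varphi_N\overline\partial u\bigr)$$
to absorb the mixed term, I obtain the Cauchy--Schwarz splitting
$$i\partial w\wedge\overline\partial w\ \le\ 2u^2\, i\partial\varphi_N\wedge\overline\partial\varphi_N\ +\ 2\varphi_N^2\, i\partial u\wedge\overline\partial u.$$
For the first summand I use the identity $\ddc(\varphi_N^2)=2\varphi_N\ddc\varphi_N+\tfrac{2i}{\pi}\partial\varphi_N\wedge\overline\partial\varphi_N$, which together with $\varphi_N\ge 0$ and $\ddc\varphi_N\ge 0$ yields $2\,i\partial\varphi_N\wedge\overline\partial\varphi_N\le \pi\ddc\varphi_N^2$; since $u^2\le 1$ and $\ddc\varphi_N^2\ge 0$, this gives $2u^2\,i\partial\varphi_N\wedge\overline\partial\varphi_N\le\pi\,\ddc\varphi_N^2$.

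For the second summand I use $\varphi_N\in[0,N]$, hence $\varphi_N^2\le N\varphi_N$, together with the hypothesis $i\partial u\wedge\overline\partial u\le T=\ddc\varphi$, to obtain $2\varphi_N^2\, i\partial u\wedge\overline\partial u\le 2N\varphi_N\,\ddc\varphi$. The key observation is the identity $\varphi_N\,\ddc\varphi=\varphi_N\,\ddc\varphi_{N+1}$: the continuous function $\varphi_N$ vanishes on $\{\varphi\le -N\}$, so both currents are supported in $\{\varphi>-N\}$, on which $\varphi_{N+1}=\varphi+(N+1)$ and therefore $\ddc\varphi_{N+1}=\ddc\varphi$. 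Using $\varphi_N\le N$ once more produces $2\varphi_N^2\, i\partial u\wedge\overline\partial u\le 2N^2\ddc\varphi_{N+1}$. Summing the two bounds and using $\pi\le 2\pi N^2$ and $2N^2\le 2\pi N^2$ for $N\ge 1$ yields the desired estimate.

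The main technical point is the limit passage, since $\varphi_N=\max\{\varphi,-N\}+N$ is only continuous and $u$ is only in $W^{1,2}_*$. I would replace $\varphi$ by its standard regularization $\varphi_\varepsilon$ (a smooth decreasing sequence) and $u$ by $u_\varepsilon$ (converging nicely to $u$ by Lemma~\ref{l:nice-CV}, with $i\partial u_\varepsilon\wedge\overline\partial u_\varepsilon\le\ddc\varphi_\varepsilon$), and, if helpful, smooth the $\max$ by a convex mollification. The calculation above applies to the smooth approximants verbatim, and one then passes to the limit using the continuity of Bedford--Taylor products along decreasing sequences of bounded p.s.h.\ functions. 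The only delicate point is that the identity $\varphi_N\,\ddc\varphi=\varphi_N\,\ddc\varphi_{N+1}$ survives the approximation, but this is stable because the truncation is a Borel operation that commutes with decreasing limits of p.s.h.\ functions.
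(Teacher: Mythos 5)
Your proposal is correct and takes essentially the same route as the paper: the same Leibniz plus Cauchy--Schwarz splitting, the bound $2u^2\, i\partial\varphi_N\wedge\overline\partial\varphi_N\le\pi\,\ddc\varphi_N^2$, the replacement of $\ddc\varphi$ by $\ddc\varphi_{N+1}$ using that the truncation vanishes on $\{\varphi\le -N\}$ (you via the multiplier identity $\varphi_N\,\ddc\varphi=\varphi_N\,\ddc\varphi_{N+1}$, the paper via the support of $w$ and the open set $\{\varphi>-N-1\}$ --- the same idea), followed by regularization as in Lemma \ref{l:nice-CV}. One small remark: in the limit step the left-hand side is controlled by the weak-convergence fact recalled at the beginning of Section \ref{s:Sobolev} (if $i\partial u_k\wedge\overline\partial u_k\le T_k$ and $T_k\to T$, then $i\partial u\wedge\overline\partial u\le T$), not by any stability of your identity, which is only needed at the level of the (continuous) approximants where $\{\varphi_\varepsilon>-N\}$ is genuinely open.
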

 
\proof  
Observe that $0 \le \varphi_N \le N$ and both $\varphi_N$ and 
$\varphi_N^2$ are p.s.h.\  So
the estimate in the lemma implies that $w$ belongs to $W^{1,2}_*(\Omega')$ 
and we only need to prove this estimate.

\medskip\noindent
{\bf Particular Case.} Consider first the case where $\varphi$ is continuous. Since
$\partial w = u \partial \varphi_N+ \varphi_N \partial u$, by Cauchy-Schwarz inequality, 
we have
$$i \partial w \wedge \overline \partial w \leq 
2 u^2 i\partial \varphi_N\wedge \dbar\varphi_N + 
2\varphi_N^2 i\partial u\wedge\dbar u
\leq 2\pi N^2(\ddc \varphi_N^2+ \ddc \varphi).$$
This implies the desired estimate because $\varphi=\varphi_{N+1}$ 
on the open set $\{\varphi>-N-1\}$ which contains the 
closed set $\{\varphi\geq -N\}$ and $w$ is supported by the last one.

\medskip\noindent
{\bf General Case.} Denote by $u_\varepsilon$ and $\varphi_\varepsilon$ the standard regularizations of $u$ and $\varphi$. We reduce slightly the domain $\Omega$ in order to avoid problems near the boundary. We have seen in the proof of  Lemma \ref{l:nice-CV} that $i\partial u_\varepsilon\wedge \dbar u_\varepsilon\leq \ddc\varphi_\varepsilon$. Define as above the functions $\varphi_{\varepsilon,N}$ and $w_\varepsilon$ associated to $\varphi_\varepsilon$ and 
$u_\varepsilon$. We obtain from the last case that 
$$i \partial w_\varepsilon \wedge \overline \partial w_\varepsilon \le 2\pi N^2 \ddc (\varphi_{\varepsilon,N}^2+ \varphi_{\varepsilon,N+1}).$$ 
When $\varepsilon$ decreases to 0, it is easy to see that $w_\varepsilon$ converges almost everywhere to $w$ and the right-hand side of the last inequality converges to $2\pi N^2 \ddc (\varphi_{N}^2+ \varphi_{N+1})$ because $\varphi_\varepsilon$ decreases to $\varphi$. 
The desired inequality in the lemma follows, see also the beginning of this section for the weak convergence in $W^{1,2}_*$.
\endproof

The following lemma gives a link  between the nice convergence and the convergence in capacity.
A related result was given in \cite{Vigny}.

\begin{lemma} \label{l:nice-cap-CV}
Let $(u_k)_k\in W^{1,2}_*\cap \cali{C}^0(\Omega)$ be a sequence converging nicely to $0$. Then $u_k$ converges to $0$ in capacity  as $k \to \infty$.  
\end{lemma}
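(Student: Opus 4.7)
The plan is to localize, then use a cutoff trick to reduce the problem to bounded test functions and finally apply Lemmas \ref{l:W12-d-dc-L2} and \ref{l:W12-L2-limit-0}. Since nice convergence is a local condition and capacity only increases upon restriction to smaller open sets, I will cover the given compact $K\Subset U$ by finitely many small balls on which $i\partial u_k\wedge\dbar u_k\leq \ddc\varphi_k$ holds for a fixed sequence $(\varphi_k)$ of p.s.h.\ functions decreasing to a p.s.h.\ limit $\varphi_\infty$, and reduce to a single such ball, still denoted $U$, with $K\Subset U$. Subtracting a constant and shrinking $U$, I may assume $\varphi_k\leq 0$ on $U$. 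By Lemma \ref{l:W12-operations}(ii), $\min(|u_k|,1)$ still converges nicely to $0$, and its super-level set at height $\delta$ coincides with that of $|u_k|$ when $\delta\leq 1$; for $\delta>1$, $\{|u_k|\geq\delta\}\subset\{\min(|u_k|,1)\geq 1\}$. Thus it suffices to treat the case $|u_k|\leq 1$.

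The key step is to apply Lemma \ref{l:W12-cutoff}: for a large integer $N$, setting $\varphi_{k,N}:=\max(\varphi_k,-N)+N\in[0,N]$ and $w_k:=\varphi_{k,N}u_k$ gives $w_k\in W^{1,2}_*(U')$ for any $U'\Subset U$, with $i\partial w_k\wedge\dbar w_k\leq \ddc\Psi_k$, where
\[
\Psi_k:=2\pi N^2\bigl(\varphi_{k,N}^2+\varphi_{k,N+1}\bigr)
\]
is a bounded p.s.h.\ function that is decreasing in $k$ with bounded p.s.h.\ limit. On $\{\varphi_k\geq -N+1\}$ one has $\varphi_{k,N}\geq 1$, hence $|w_k|\geq |u_k|$, and therefore
\[
\{|u_k|\geq\delta\}\cap K\;\subset\;\bigl(\{|w_k|\geq\delta\}\cap K\bigr)\cup\bigl(\{\varphi_k<-N+1\}\cap K\bigr).
\]
The capacity of the second piece is at most $c(N-1)^{-1}\|\varphi_k\|_{L^1(U)}$ by the third estimate of Lemma \ref{l:CLN+}, and this is uniformly bounded in $k$ because $\varphi_\infty\leq\varphi_k\leq 0$ with $\varphi_\infty\in L^1_\loc$.

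For the first piece, the capacitary Chebyshev inequality gives $\capK(\{|w_k|\geq\delta\}\cap K,U)\leq \delta^{-2}\sup_v\int_K w_k^2(\ddc v)^n$, where the supremum is taken over p.s.h.\ functions $v$ on $U$ with $0\leq v\leq 1$. Rescaling $w_k$ and $\Psi_k$ by an appropriate $N$-dependent constant so that the rescaled $w_k$ has $*$-norm at most $1$ and the rescaled $\Psi_k$ takes values in $[0,1]$, the second estimate of Lemma \ref{l:W12-d-dc-L2} yields
\[
\sup_v \int_K w_k^2(\ddc v)^n\;\leq\; C_N\Bigl(\int_{U'} w_k^2(\ddc\Psi_k+\omega)^n\Bigr)^{1/2^n}.
\]
To see that the right-hand side tends to $0$, observe first that $w_k\to 0$ weakly in $W^{1,2}_*$ (as $u_k\to 0$ in $L^2_\loc$ by Rellich and $\varphi_{k,N}$ is uniformly bounded) and that $|w_k|\leq N$ uniformly. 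Writing $\omega=\ddc\rho$ for a fixed smooth p.s.h.\ potential $\rho$ on $U$ and expanding $(\ddc\Psi_k+\omega)^n$, each resulting term is a wedge product of $\ddc\Psi_k$ (decreasing) and $\ddc\rho$ (constant), so Lemma \ref{l:W12-L2-limit-0} applies termwise and yields the vanishing.

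Putting the two estimates together, given $\varepsilon>0$, I first fix $N$ so large that the second piece contributes less than $\varepsilon/2$ uniformly in $k$, and then choose $k_0$ so that the first piece contributes less than $\varepsilon/2$ for $k\geq k_0$. The main subtlety is that the local potentials $\varphi_k$ need not be locally bounded below on $U$, which is exactly what forces the use of the cutoff construction of Lemma \ref{l:W12-cutoff}---producing a bounded substitute $w_k$ together with a bounded p.s.h.\ majorant $\Psi_k$---as a bridge between the nice convergence of $(u_k)$ and the hypotheses of Lemmas \ref{l:W12-d-dc-L2} and \ref{l:W12-L2-limit-0}.
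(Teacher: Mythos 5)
Your argument is correct and follows essentially the same route as the paper's proof: truncate $u_k$ to a bounded sequence, form $w_k=\varphi_{k,N}u_k$ via Lemma \ref{l:W12-cutoff} so that its majorant $\Psi_k$ is a bounded decreasing sequence of p.s.h.\ functions, split off the set where the potential is very negative (controlled uniformly by the third estimate of Lemma \ref{l:CLN+}), and bound the remaining capacity by Chebyshev plus the second estimate of Lemma \ref{l:W12-d-dc-L2} and Lemma \ref{l:W12-L2-limit-0}. The only deviations (truncation at height $1$ instead of $N$, using $\{\varphi_k<-N+1\}$ rather than the limit potential's sublevel set, and the explicit rescaling and termwise expansion of $(\ddc\Psi_k+\omega)^n$) are cosmetic.
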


\proof  Since the problem is local, we can assume that there are closed positive $(1,1)$-currents $T_k$ 
and negative p.s.h.\  functions $\varphi_k$ such that 
\begin{align} \label{ine-dinhnghiauktiennicely}
i \partial u_k \wedge \overline \partial u_k \le T_k =\ddc \varphi_k \quad \text{and} \quad \|T_k\| \leq 1.
\end{align}
Since $u_k \to 0$ nicely, we can also assume that  $\varphi_k$ decreases to some negative p.s.h.\   function $\varphi$ on $\Omega$.  
Let $K$ be a compact subset of $\Omega$ and $\delta>0$. We need to show that
$$\lim_{k\to\infty} \capK (\{|u_k|\geq \delta\}\cap K,\Omega)=0.$$

Consider a large positive constant $N$. 
By definition of capacity and Lemma \ref{l:W12-d-dc-L2}, we have
$$\capK(\{|u_k|\geq N\}\cap K,\Omega)\lesssim N^{-2}  \sup \Big\{ \int_K u_k^2 (\ddc v)^n, \  0 \le v \le 1  \text{ p.s.h.}\Big\} \lesssim N^{-2}.$$
On the other hand, by Lemma \ref{l:W12-operations}, $\min(|u_k|,N)$ converges to 0 nicely. Therefore, replacing $u_k$ with $\min(|u_k|,N)$ allows us to assume that the sequence $(u_k)_k$ is uniformly bounded.

Define $\varphi_{k,N}:=\max (\varphi_k,-N)+N$ and  $u_k':=\varphi_{k,N} u_k$. 
By Lemma \ref{l:W12-cutoff}, we have $i\partial u_k'\wedge \dbar u_k' \leq \ddc\varphi'_k$ with $\varphi'_k:= 2\pi N^2(\varphi_{k,N}^2+\varphi_{k,N+1})$. 
Observe that $\varphi_k'$ decreases to the bounded p.s.h.\  function $\varphi':= 2\pi N^2(\varphi_{N}^2+\varphi_{N+1})$ as $\varphi_k$ decreases to the p.s.h.\  function $\varphi$. Thus, $u'_k$ converges to $0$ nicely.  Since $\varphi_k \ge \varphi$, we have $\varphi_{k,N} \ge 1$ on $\{\varphi \ge  -N+1\}$. It follows that  $|u'_k| \ge |u_k|$ on $\{\varphi \ge -N+1\}$.   
On the other hand, using Lemma \ref{l:CLN+}, we obtain 
$$\capK\big( \{ \varphi < - N+1\}\cap K, \Omega \big) \lesssim N^{-1}.$$
It follows that
$$\capK \big( \{ |u_k| \ge \delta\}\cap K, \Omega \big) \lesssim \capK \big( \{ |u_k'| \ge \delta\}\cap K, \Omega \big)+N^{-1}.$$
Therefore, since the last estimate holds for every $N$, we only need to check that
$$\lim_{k\to\infty} \capK (\{|u'_k|\geq \delta\}\cap K,\Omega)=0.$$

By definition of capacity and Lemma \ref{l:W12-d-dc-L2}, we have 
\begin{eqnarray*}
\capK \big( \{ |u'_k| \ge \delta\}\cap K, \Omega \big) & \le & \delta^{-2} \sup \Big\{ \int_K u_k'^2 (\ddc v)^n, \   0 \le v \le 1  \text{ p.s.h.} \Big\} \\
& \lesssim & \bigg(\int_\Omega u_k'^2(\ddc\varphi'_k+\omega)^n\bigg)^{1/2^n}.
\end{eqnarray*} 
By Lemma \ref{l:W12-L2-limit-0}, the last integral tends to 0 as $k$ tends to infinity. The lemma follows.
\endproof

Here is the main result of this section which generalizes results by Vigny in \cite{Vigny}, see also Corollary \ref{c:nice-cap-limits} below.

\begin{theorem} \label{t:W12-rep-limit}
Let $u \in W^{1,2}_*$. Then there exists a Borel function $\tilde{u}$ defined everywhere on $\Omega$ except on a pluripolar set 
such that $\tilde{u}= u$ almost everywhere and   the following properties hold: 

$(i)$ For every open set $U \subset \Omega$ and every sequence  $(u_k)_k \subset W^{1,2}_*(U)\cap \cali{C}^0(U)$ such that $u_k \to u$ nicely in $W^{1,2}_*(U)$, we have $u_k \to \tilde{u}$ in capacity as $k \to \infty$. In particular, there exists a subsequence $(u_{j_k})_k$ of $(u_k)_k$ such that $u_{j_k}$ converges pointwise to $\tilde{u}$ everywhere on $U$ except on a pluripolar set. 

$(ii)$ For every constant $\varepsilon>0$, there exists an open subset $U$ of $\Omega$ with $\capK(U, \Omega) \le \varepsilon$ such that $\tilde{u}$ is continuous on $\Omega \backslash U$.

$(iii)$ If $\tilde{u}'$ is another Borel function satisfying $(i)$, then $\tilde{u}'= \tilde{u}$ on $\Omega$ except on a pluripolar set.  
 \end{theorem}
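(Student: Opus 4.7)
The plan is to construct $\tilde u$ as a capacity limit of a subsequence of standard regularizations of $u$, and then verify (i)--(iii) using Lemmas~\ref{l:nice-CV}, \ref{l:nice-cap-CV} and \ref{l:Cauchy-limit}. Working locally on $\Omega' \Subset \Omega$, I write $T = \ddc\varphi$ with $\varphi \le 0$ p.s.h., where $T$ realises the minimum mass in \eqref{e:W12-Def}. By Lemma~\ref{l:nice-CV}, the standard regularizations $u_\varepsilon$ satisfy $i\partial u_\varepsilon \wedge \dbar u_\varepsilon \le \ddc\varphi_\varepsilon$ with $\varphi_\varepsilon \searrow \varphi$, and converge to $u$ nicely in $W^{1,2}_*(\Omega')$. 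For a decreasing sequence $\varepsilon_k \searrow 0$, set $w_k := u_{\varepsilon_k} - u_{\varepsilon_{k+1}}$. A straightforward Cauchy--Schwarz bound for real smooth functions yields
\begin{align*}
i\partial w_k \wedge \dbar w_k \le 2\bigl(i\partial u_{\varepsilon_k}\wedge\dbar u_{\varepsilon_k} + i\partial u_{\varepsilon_{k+1}}\wedge\dbar u_{\varepsilon_{k+1}}\bigr) \le \ddc\bigl(2\varphi_{\varepsilon_k} + 2\varphi_{\varepsilon_{k+1}}\bigr),
\end{align*}
and the dominating potential decreases in $k$ to the p.s.h.\ function $4\varphi$. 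Since also $w_k \to 0$ in $L^1_\loc$, the convergence $w_k \to 0$ is nice, so Lemma~\ref{l:nice-cap-CV} gives $w_k \to 0$ in capacity. Passing to a further subsequence so that $\capK(\{|u_{\varepsilon_k}-u_{\varepsilon_{k+1}}|>2^{-k}\}\cap K_s,\Omega') \le 2^{-k}$ along a compact exhaustion $(K_s)$ of $\Omega$, the sequence $(u_{\varepsilon_k})$ becomes Cauchy in capacity. Lemma~\ref{l:Cauchy-limit} then produces the Borel function $\tilde u$, defined off a pluripolar set, with $u_{\varepsilon_k}\to \tilde u$ in capacity and, along a subsequence, pointwise. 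The identity $\tilde u = u$ a.e.\ follows from $u_{\varepsilon_k} \to u$ in $L^1_\loc$. For (ii), the construction provides, for each $\varepsilon > 0$, an open set off which the chosen subsequence converges uniformly to $\tilde u$, yielding continuity of $\tilde u$ there.

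For (i), let $(u_k) \subset W^{1,2}_*(U)\cap \cali{C}^0(U)$ converge nicely to $u$ on some open $U \subset \Omega$, with $i\partial u_k \wedge \dbar u_k \le \ddc\psi_k$ and $\psi_k \searrow \psi$ locally. Setting $\eta_k := u_k - u_{\varepsilon_k}$, the same Cauchy--Schwarz bound gives
\begin{align*}
i\partial \eta_k \wedge \dbar \eta_k \le \ddc\bigl(2\psi_k + 2\varphi_{\varepsilon_k}\bigr),
\end{align*}
where $2\psi_k + 2\varphi_{\varepsilon_k}$ is a decreasing sequence of p.s.h.\ functions with p.s.h.\ limit $2\psi + 2\varphi$. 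Since moreover $\eta_k \to 0$ weakly in $W^{1,2}_*(U)$, the convergence is nice, and Lemma~\ref{l:nice-cap-CV} yields $\eta_k \to 0$ in capacity. Combined with $u_{\varepsilon_k} \to \tilde u$ in capacity and the subadditivity of capacity, this gives $u_k \to \tilde u$ in capacity on $U$; the pointwise subsequence statement then follows from Lemma~\ref{l:Cauchy-limit}(ii). Property (iii) is immediate: any $\tilde u'$ satisfying (i), applied to the particular nice sequence $(u_{\varepsilon_k})$, is a capacity limit of $(u_{\varepsilon_k})$ and hence equals $\tilde u$ off a pluripolar set, by the uniqueness clause of Lemma~\ref{l:Cauchy-limit}(i).

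The main technical hurdle is the verification of nice convergence for the difference sequences $(w_k)$ and $(\eta_k)$: Cauchy--Schwarz produces the potentials $2\varphi_{\varepsilon_k}+2\varphi_{\varepsilon_{k+1}}$ and $2\psi_k+2\varphi_{\varepsilon_k}$, and one must check that these genuinely form decreasing families of p.s.h.\ functions with p.s.h.\ limits — which relies on the monotonicity of the standard regularization in $\varepsilon$ together with a judicious matching of indices between the two approximating sequences. Once this monotonicity is secured, Lemma~\ref{l:nice-cap-CV} delivers the capacity convergence, and Lemma~\ref{l:Cauchy-limit} packages everything into the required Borel representative with its quasi-continuity and uniqueness properties.
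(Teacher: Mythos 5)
Your construction of $\tilde u$ has a genuine gap at the step where you pass from capacity-smallness of the consecutive differences $w_k = u_{\varepsilon_k}-u_{\varepsilon_{k+1}}$ to the assertion that $(u_{\varepsilon_k})$ is Cauchy with respect to capacity. Convergence in capacity $w_k\to 0$ only says that for each \emph{fixed} $\delta>0$ one has $\capK(\{|w_k|>\delta\}\cap K,\Omega')\to 0$ as $k\to\infty$; it gives no summable control of the type $\capK(\{|u_{\varepsilon_k}-u_{\varepsilon_{k+1}}|>2^{-k}\}\cap K_s,\Omega')\le 2^{-k}$, and your remedy of ``passing to a further subsequence'' does not repair this: the consecutive differences of a subsequence are $u_{\varepsilon_{k_j}}-u_{\varepsilon_{k_{j+1}}}$, which are not among the $w_k$ and are not controlled by them (just as $a_{k+1}-a_k\to 0$ does not make a numerical sequence Cauchy). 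The Cauchy property \eqref{e:cap-cauchy-def} requires uniform control over \emph{all} pairs $k,l\ge N$, and your argument never addresses non-consecutive pairs, so Lemma \ref{l:Cauchy-limit} cannot yet be invoked.

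The repair is exactly your Cauchy--Schwarz estimate, but applied to arbitrary pairs and organized as a proof by contradiction, which is what the paper does. If $(u_{\varepsilon_k})$ were not Cauchy in capacity, one could choose a constant $\delta,\kappa>0$, a compact $K\Subset U\Subset\Omega$, and strictly increasing indices $m_j<l_j$ with $\capK(\{|u_{\varepsilon_{m_j}}-u_{\varepsilon_{l_j}}|>\delta\}\cap K,U)\ge\kappa$ for every $j$. Your bound gives $i\partial(u_{\varepsilon_{m_j}}-u_{\varepsilon_{l_j}})\wedge\dbar(u_{\varepsilon_{m_j}}-u_{\varepsilon_{l_j}})\le \ddc\big(2\varphi_{\varepsilon_{m_j}}+2\varphi_{\varepsilon_{l_j}}\big)$, and since $m_j,l_j$ increase the dominating potentials decrease in $j$ (to $4\varphi$), so $u_{\varepsilon_{m_j}}-u_{\varepsilon_{l_j}}\to 0$ nicely, hence in capacity by Lemma \ref{l:nice-cap-CV}, contradicting the lower bound. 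With the Cauchy property established this way, Lemma \ref{l:Cauchy-limit} produces $\tilde u$ and the rest of your proposal goes through: your treatment of (i) via $\eta_k=u_k-u_{\varepsilon_k}\to 0$ nicely is correct and in fact slightly more direct than the paper's interleaving of the sequences $u_{\varepsilon_1},u_1,u_{\varepsilon_2},u_2,\ldots$, and your handling of (ii) and (iii) matches the paper's.
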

 
We see that $\tilde{u}$ is unique modulo pluripolar sets.  In analogy with the case of p.s.h.\  functions, we consider the above property (ii) as a quasi-continuity property of functions in $W^{1,2}_*$. 
 
\proof 
By Lemma \ref{l:Cauchy-limit}, (iii) is a direct consequence of (i). Moreover, (ii) follows from (i) by using exactly the arguments to prove the quasi-continuity of p.s.h.\  functions, see \cite[Th. 1.13]{Kolodziej05}. We now prove (i) and start with the construction of $\tilde u$.  

Let $u_\varepsilon$ be the standard regularization of $u$. Choose a sequence of positive numbers $(\varepsilon_k)_k$ decreasing to 0. 
We first prove the following claim.

\smallskip\noindent
{\bf Claim.} $(u_{\varepsilon_k})_k$ is a Cauchy sequence in $W^{1,2}_*(\Omega')$ with respect to capacity for every open set $\Omega'\Subset\Omega$.

\smallskip

Assume by contradiction that the claim is not true. 
By replacing $(\varepsilon_k)_k$ with a subsequence, we have the following property for some compact set $K$ and some open set $U$ with $K\Subset U\Subset\Omega$
$$\capK\big(\{|u_{\varepsilon_{2m}}-u_{\varepsilon_{2m+1}}|>\delta\}\cap K, U\big) \geq \kappa$$
for every $m=1,2,\ldots$, where $\delta$ and $\kappa$ are some positive numbers.

By Lemma \ref{l:nice-CV}, we have $u_{\varepsilon_k}\to u$ nicely. So we can write locally $i\partial u_{\varepsilon_k}\wedge \dbar u_{\varepsilon_k} \leq \ddc \psi_k$ for  some p.s.h.\  function $\psi_k$ which decreases to a p.s.h.\  function when $k$ tends to infinity. Observe that by Cauchy-Schwarz inequality
\begin{eqnarray*}
i \partial (u_{\varepsilon_{2m}}-u_{\varepsilon_{2m+1}}) \wedge \dbar  (u_{\varepsilon_{2m}}-u_{\varepsilon_{2m+1}})
&\leq& 2 i\partial u_{\varepsilon_{2m}} \wedge \dbar u_{\varepsilon_{2m}} + 2 i\partial u_{\varepsilon_{2m+1}} \wedge \dbar u_{\varepsilon_{2m+1}} \\
&\leq& \ddc (2\psi_{2m}+2\psi_{2m+1}).
\end{eqnarray*}
Therefore, $u_{\varepsilon_{2m}}-u_{\varepsilon_{2m+1}}$ tends to 0 nicely and hence in capacity, according to Lemma \ref{l:nice-cap-CV}.
This contradicts the above estimate on capacity for $u_{\varepsilon_{2m}}-u_{\varepsilon_{2m+1}}$ and completes the proof of the claim.

We apply Lemma \ref{l:Cauchy-limit} to the sequence $(u_{\varepsilon_k})_k$ and obtain a function $\tilde u$ equal almost everywhere to $u$ such that  $u_{\varepsilon_k}\to u$ in capacity. Now, the function $\tilde u$ is constructed. It remains to prove the first part of the assertion (i).
Since $u_k\to u$ nicely, we can write locally $i\partial u_k\wedge \dbar u_k\leq \ddc \varphi_k$ for some p.s.h.\  function $\varphi_k$ decreasing to a p.s.h.\  function when $k$ goes to infinity. Using that both $i\partial u_{\varepsilon_k}\wedge \dbar u_{\varepsilon_k}$ and $i\partial u_k\wedge \dbar u_k$ are bounded by $\ddc (\psi_k+\varphi_k)$, we see that 
the sequence
$$u_{\varepsilon_1},u_1,u_{\varepsilon_2},u_2,\ldots$$
converges to $u$ nicely. 
As above, we can show that this is a Cauchy sequence with respect to capacity.
Therefore, Lemma \ref{l:Cauchy-limit} implies that $u_k\to \tilde u$ in capacity.
\endproof

 From now on, by a \emph{good representative} of $u$, we always mean a function $\tilde{u}$ as in Theorem \ref{t:W12-rep-limit}. 
 It coincides with the representative constructed by Vigny in \cite{Vigny}. Theorem \ref{t:W12-rep-limit}(i) shows that the good representatives do not depend on the coordinates on $\Omega$. Therefore, this notion is well defined for functions on manifolds. If no confusion arises,  when refer to functions in $W^{1,2}_*$, \emph{we often use implicitly their good representatives}. The requirement that $u_k$ is continuous in Theorem \ref{t:W12-rep-limit}(i) is actually superfluous as shown in the following result. 

\begin{corollary}\label{c:nice-cap-limits}  
Let $u \in W^{1,2}_*$ and  $u_k\in W^{1,2}_*$ for $k \in \N$. Assume that  $u_k \to u$ nicely as $k \to \infty$.  Then $\tilde u_k \to \tilde{u}$ in capacity as $k \to \infty$, where $\tilde{u}, \tilde u_k$ are good representatives of $u$ and $u_k$ respectively (we often say that $u_k \to u$ in capacity for simplicity).  
\end{corollary}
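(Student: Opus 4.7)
The plan is to reduce to Theorem \ref{t:W12-rep-limit}(i) by interposing a suitable sequence of continuous functions between $u_k$ and $u$. For each $k$, let $u_{k,\varepsilon}$ denote the standard regularization of $u_k$. By Lemma \ref{l:nice-CV}, $u_{k,\varepsilon} \to u_k$ nicely as $\varepsilon \to 0$; applying Theorem \ref{t:W12-rep-limit}(i) to the continuous family $u_{k,\varepsilon}$ gives $u_{k,\varepsilon} \to \tilde u_k$ in capacity as $\varepsilon \to 0$.

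The heart of the argument is a diagonal choice of $\varepsilon_k$ decreasing to $0$ such that $v_k := u_{k,\varepsilon_k}$ converges nicely to $u$. Write the hypothesis $u_k \to u$ nicely using local p.s.h.\ potentials $\varphi_k$ on a neighborhood $U_x$ of each $x \in \Omega$, with $i\partial u_k \wedge \dbar u_k \leq \ddc \varphi_k$ and $\varphi_k$ decreasing to a p.s.h.\ function. Two monotonicity properties of the standard regularization are at our disposal: for fixed $\varepsilon$, $\varphi_{k,\varepsilon}$ is decreasing in $k$ (convolution with a nonnegative kernel preserves pointwise ordering); and for fixed $k$, $\varphi_{k,\varepsilon}$ is decreasing as $\varepsilon$ decreases (recalled in Section \ref{s:Sobolev}). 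Hence, choosing $(\varepsilon_k)$ strictly decreasing to $0$ yields
$$\varphi_{k,\varepsilon_k} \ \geq\  \varphi_{k+1,\varepsilon_k} \ \geq\  \varphi_{k+1,\varepsilon_{k+1}},$$
so $(\varphi_{k,\varepsilon_k})_k$ is a decreasing sequence of p.s.h.\ functions bounded below by the p.s.h.\ limit of $(\varphi_k)$, hence decreasing to a p.s.h.\ function. Combined with $\|v_k\|_* \leq \|u_k\|_*$ from Lemma \ref{l:nice-CV} and distributional convergence $v_k \to u$ (secured by $\|v_k - u_k\|_{L^1_\loc} \to 0$, which we can arrange by choosing $\varepsilon_k$ small enough), this shows $v_k \to u$ nicely.

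By the same diagonal argument and the capacity convergence $u_{k,\varepsilon} \to \tilde u_k$ from the first paragraph, we can further arrange that, for any fixed $K \Subset U \Subset \Omega$,
$$\capK\bigl(\{|v_k - \tilde u_k| > 1/k\} \cap K, U\bigr) < 1/k.$$
Theorem \ref{t:W12-rep-limit}(i) applied to the continuous sequence $(v_k)$ then yields $v_k \to \tilde u$ in capacity, and subadditivity of capacity applied to the inclusion
$$\{|\tilde u_k - \tilde u| > \delta\} \subset \{|\tilde u_k - v_k| > \delta/2\} \cup \{|v_k - \tilde u| > \delta/2\}$$
finishes the proof. The main obstacle is the monotonicity argument of the second paragraph, which is what enables the diagonal sequence $(v_k)$ to converge nicely even though the $u_k$'s themselves are not continuous.
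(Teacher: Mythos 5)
Your proposal is correct and is essentially the paper's own argument: the paper likewise regularizes each $u_k$, chooses the parameters inductively (using exactly the monotonicity of convolution in $k$ and in $\varepsilon$ that you spell out) so that the regularized potentials decrease to a p.s.h.\ function and the diagonal sequence converges nicely to $u$, and then applies Theorem \ref{t:W12-rep-limit}(i) twice together with the capacity estimate against $\tilde u_k$ to conclude. No substantive difference.
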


\proof  
Note that the problem is local and we can always reduce the domain $\Omega$ in order to avoid problems near the boundary.
By hypothesis, we can write $i\partial u_k\wedge \dbar u_k \leq \ddc \varphi_k$ for some p.s.h.\  function $\varphi_k$ decreasing to a p.s.h.\  function when $k$ tends to infinity. 
 
We can apply Lemma \ref{l:nice-CV} for $u_k$ instead of $u$. Then we apply Theorem \ref{t:W12-rep-limit}(i) for the obtained sequence of functions. We deduce the existence of  $u'_k\in  W^{1,2}_* \cap \cali{C}^\infty(\Omega)$ such that 
$$\|u'_k - u_k \|_{L^2} \le 1/k, \quad \capK\big(\{|u'_k - \tilde u_k| \ge 1/k\} \cap K, \Omega\big) \le 1/k, \quad  \|u'_k\|_* \le c$$ 
for some constant $c$ independent of $k$ and $i \partial u_k' \wedge \overline \partial u_k' \le \ddc \varphi_k'$, where $\varphi'_k$ is a p.s.h.\  function. We can obtain $\varphi_k'$ from $\varphi_k$ using the standard regularization, see  the proof of Lemma \ref{l:nice-CV}. Since the sequence $(\varphi_k)_k$ decreases to a p.s.h.\  function, we can choose $u_k'$ (inductively on $k=1,2,\ldots$) so that 
$(\varphi_k')_k$ also decreases to some p.s.h.\  function. It follows that $u'_k \to u$ nicely in $W^{1,2}_*$. This allows us to apply Theorem \ref{t:W12-rep-limit}(i) again to infer that $u'_k \to \tilde{u}$ in capacity.  
Finally, the above capacity estimate (involving $u_k'-\tilde u_k$) implies the result. 
\endproof

We also need the following observation in order to work directly with good representatives. 

\begin{lemma} \label{l:operation-rep} 
Let $\tau: \R \to \R$ be a Lipschitz function. Let $u\in W^{1,2}_*$ and $\tilde{u}$ a good representative of $u$. Then, $\tau(\tilde{u})$ is a good representative of  $\tau(u) \in W^{1,2}_*$. In particular, the functions $\tilde{u}^+, \tilde{u}^-, |\tilde{u}|$ are good representatives of  $u^+, u^-, |u|$, and if $u_1, u_2 \in W^{1,2}_*$, then $\max\{\tilde{u}_1, \tilde{u}_2\} $ is a good representative of $\max\{u_1, u_2\}$, where $\tilde{u}_j$ is a good representative of $u_j$ for $j=1,2$. 
 \end{lemma}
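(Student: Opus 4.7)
The plan is to exploit the characterization of the good representative from Theorem \ref{t:W12-rep-limit}(i), combined with the continuity of $\tau$ to transport pointwise limits through $\tau$, and then invoke uniqueness modulo pluripolar sets (Theorem \ref{t:W12-rep-limit}(iii)).

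First I would take $(u_\varepsilon)$ the standard regularization of $u$. By Lemma \ref{l:nice-CV}, $u_\varepsilon \to u$ nicely in $W^{1,2}_*(\Omega')$ on every $\Omega' \Subset \Omega$; by Lemma \ref{l:W12-operations}(i)(ii), $\tau(u_\varepsilon) \in W^{1,2}_* \cap \Cc^0(\Omega')$ and $\tau(u_\varepsilon) \to \tau(u)$ nicely as well. Pick any good representative $\widetilde{\tau(u)}$ of $\tau(u)$ given by Theorem \ref{t:W12-rep-limit}. Applying Theorem \ref{t:W12-rep-limit}(i) to the sequence $(u_\varepsilon)$ and to the sequence $(\tau(u_\varepsilon))$ simultaneously and extracting a common subsequence $\varepsilon_k \to 0$, we obtain pointwise convergence $u_{\varepsilon_k} \to \tilde u$ outside a pluripolar set $A$ and $\tau(u_{\varepsilon_k}) \to \widetilde{\tau(u)}$ outside a pluripolar set $B$.

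Next, since $\tau$ is continuous, on $\Omega \setminus A$ we automatically have $\tau(u_{\varepsilon_k}) \to \tau(\tilde u)$ pointwise. On $\Omega \setminus (A \cup B)$, which is the complement of a pluripolar set, both limits hold simultaneously, forcing $\tau(\tilde u) = \widetilde{\tau(u)}$ there. By the uniqueness statement Theorem \ref{t:W12-rep-limit}(iii), this means precisely that $\tau(\tilde u)$ is a good representative of $\tau(u)$.

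For the ``in particular'' claims, applying the main statement to the $1$-Lipschitz maps $t \mapsto t^+, t^-, |t|$ yields the assertions for $\tilde u^+, \tilde u^-, |\tilde u|$. For $\max\{\tilde u_1, \tilde u_2\}$, I would use the identity $\max\{u_1,u_2\} = (u_1-u_2)^+ + u_2$, after first checking that sums respect good representatives: this follows from the same regularization-plus-subsequence scheme applied to $u_{1,\varepsilon} + u_{2,\varepsilon} \to u_1 + u_2$, whose nice convergence is a consequence of
\[
i \partial(u_1+u_2) \wedge \dbar (u_1+u_2) \le 2 i\partial u_1 \wedge \dbar u_1 + 2 i\partial u_2 \wedge \dbar u_2,
\]
together with Lemma \ref{l:W12-operations}(ii). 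I do not expect any real obstacle: the argument is essentially ``continuity $+$ uniqueness modulo pluripolar sets''.
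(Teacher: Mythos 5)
Your proposal is correct and follows essentially the same route as the paper: regularize, observe that $\tau(u_{\varepsilon_k})\to\tau(u)$ nicely via Lemma~\ref{l:W12-operations}(ii), then use Theorem~\ref{t:W12-rep-limit}(i) to extract a subsequence converging pointwise off a pluripolar set and push the limit through the continuous $\tau$. The only extra content you supply is the (correct and necessary, since $\max$ is a function of two variables) handling of $\max\{\tilde u_1,\tilde u_2\}$ via the identity $\max\{u_1,u_2\}=(u_1-u_2)^+ +u_2$ and the observation that sums respect good representatives; the paper leaves this implicit.
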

 
 \proof  
Let $u_k:=u_{\varepsilon_k}$ be as in Lemma \ref{l:nice-CV}.
By Lemma \ref{l:W12-operations}(ii), we  have $\tau(u_k)\to \tau(u)$ nicely.  Thus, the result is a direct consequence of the second assertion of Theorem \ref{t:W12-rep-limit}(i). 
 \endproof

Let $v_1, \ldots, v_n$ be bounded p.s.h.\  functions on $\Omega$ and define $\mu:= \ddc v_1 \wedge \cdots \wedge \ddc v_n$. Note that $\mu$ is a positive measure  having no mass on  pluripolar sets because the capacity of every pluripolar set is zero. Theorem \ref{t:W12-rep-limit} allows us to integrate any nonnegative $u\in W^{1,2}_*$ against $\mu$ by putting $\langle \mu, u\rangle:= \langle \mu, \tilde{u} \rangle$. The definition is independent of the choice of  a good representative $\tilde{u}$ of $u$. More generally, we can defined in the same way $\langle \mu, \phi(u)\rangle$ for any positive Borel function $\phi$ defined everywhere on $\R.$

For every set $A \subset \Omega$ and every signed measure $\nu$, denote by $\|\nu\|_A$  the mass of $\nu$ on $A$.  The following properties will be useful in practice.

\begin{proposition}\label{p:W12-MA-general}  
The estimate \eqref{l:W12-L2-MA-1} and Lemma \ref{l:W12-L1-pot} hold for all functions 
$u \in W^{1,2}_*$ with $\|u\|_* \le 1$ which are not necessarily continuous. Moreover, if $u_k \to u$ in $W^{1,2}_*$ nicely, then 
$$\lim_{k\to\infty}\|(u_k- u) \mu\|_K =0$$
for every compact set $K\subset \Omega$. 
\end{proposition}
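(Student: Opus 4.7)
\medskip

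\noindent\textbf{Proof plan.}
The strategy is to reduce to the continuous case via the standard regularization. Let $u\in W^{1,2}_*(\Omega)$ with $\|u\|_*\le 1$ and let $u_\varepsilon$ be its standard regularization. By Lemma \ref{l:nice-CV}, each $u_\varepsilon$ is smooth, satisfies $\|u_\varepsilon\|_*\le 1$, and $u_\varepsilon\to u$ nicely on every $\Omega'\Subset\Omega$. Applying Theorem \ref{t:W12-rep-limit}(i) together with the fast-convergence extraction of Lemma \ref{l:Cauchy-limit}(ii), I may pass to a subsequence $(\varepsilon_k)$ along which $u_{\varepsilon_k}(x)\to\tilde u(x)$ for every $x$ outside some pluripolar set $P\subset\Omega'$. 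Since any measure of the form $\mu=\ddc v_1\wedge\cdots\wedge\ddc v_n$ with bounded p.s.h.\ $v_j$ puts no mass on $P$, this convergence holds $\mu$-almost everywhere.

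\medskip

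For the extension of \eqref{l:W12-L2-MA-1}, I apply the continuous case to each $u_{\varepsilon_k}$ and invoke Fatou's lemma to obtain $\int_K\tilde u^2\,d\mu\le c$. For the extension of Lemma \ref{l:W12-L1-pot}, I decompose
$$\ddc(w_1-w_2)\wedge\ddc v_1\wedge\cdots\wedge\ddc v_{n-1}=\mu_1-\mu_2,$$
where $\mu_i:=\ddc w_i\wedge\ddc v_1\wedge\cdots\wedge\ddc v_{n-1}$ is a positive measure vanishing on pluripolar sets. The extension of \eqref{l:W12-L2-MA-1} just established, used with $\{w_i,v_1,\ldots,v_{n-1}\}$ as the $n$ p.s.h.\ functions, yields $\sup_k\int_K u_{\varepsilon_k}^2\,d\mu_i<\infty$ for any compact $K$ containing $\supp\tau$. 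Uniform $L^2(\mu_i)$-boundedness combined with $\mu_i$-a.e.\ convergence gives uniform $\mu_i$-integrability, so Vitali's convergence theorem yields $u_{\varepsilon_k}\to\tilde u$ in $L^1(\mu_i)$; passing to the limit in the inequality of Lemma \ref{l:W12-L1-pot} applied to $u_{\varepsilon_k}$ concludes the argument.

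\medskip

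For the convergence statement, Corollary \ref{c:nice-cap-limits} gives $u_k\to u$ in capacity. Since nice convergence bounds $\|u_k\|_*$ uniformly, the sequence $(u_k-u)$ has uniformly bounded $*$-norm, so by the first assertion of the proposition $\sup_k\int_K(u_k-u)^2\,d\mu<\infty$. Given any subsequence, the same fast-convergence extraction as in the setup produces a sub-subsequence along which $u_k-u\to 0$ outside a pluripolar set, hence $\mu$-a.e.; combining the uniform $L^2(\mu)$-bound with Vitali's theorem yields $L^1(\mu)$-convergence of $u_k-u$ to $0$ along that sub-subsequence. Since every subsequence admits such a sub-subsequence converging to $0$ in $L^1(\mu)$, the full sequence $\|(u_k-u)\mu\|_K$ converges to $0$.

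\medskip

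The technical core of the argument is the two-step passage from capacity (or nice) convergence to $L^1(\mu)$-convergence: first, from capacity convergence to $\mu$-almost-everywhere convergence along a subsequence, which relies on fast-convergence extraction plus the fact that zero-capacity subsets of hyperconvex domains are pluripolar and therefore $\mu$-null; second, from a.e.\ convergence to $L^1(\mu)$-convergence, which uses only the uniform $L^2(\mu)$-bound furnished by the first assertion and Vitali's theorem. Everything else amounts to selecting correct test measures and routine subsequence bookkeeping.
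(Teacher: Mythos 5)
Your argument is sound and reaches the same conclusions by a genuinely different route. For the extension of \eqref{l:W12-L2-MA-1}, the paper first reduces to bounded $u$ (via Lemmas \ref{l:W12-operations} and \ref{l:operation-rep}) and then uses dominated convergence along a pointwise-convergent subsequence of the regularizations; your Fatou argument dispenses with the truncation and is slightly shorter. For the convergence assertion, the paper replaces each $v_j$ by $v_1+\cdots+v_n$, so that $\mu$ is dominated by a Monge--Amp\`ere measure with bounded potential, truncates $u_k$ and $u$ at level $N$, and converts convergence in capacity (Corollary \ref{c:nice-cap-limits}) into convergence of $\|(u_k-u)\mu\|_K$ by an explicit splitting over $\{|u_{k,N}-u_N|\ge\delta\}$, i.e.\ it exploits that $\mu$-mass is controlled by capacity; the extension of Lemma \ref{l:W12-L1-pot} is then deduced from this convergence statement. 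You instead obtain uniform integrability from the uniform $L^2(\mu)$ bound and apply Vitali along $\mu$-a.e.\ convergent sub-subsequences, and you prove the extension of Lemma \ref{l:W12-L1-pot} independently by the same Vitali mechanism applied to $\mu_1,\mu_2$. Both routes work; the paper's splitting is more quantitative and self-contained, yours needs less bookkeeping.

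One step deserves a more careful justification: the extraction of a sub-subsequence of $u_k-u$ converging $\mu$-almost everywhere. Lemma \ref{l:Cauchy-limit} is stated (and proved) only for continuous functions -- its proof uses that the sets $\{|u_{j_k}-u_{j_{k+1}}|>2^{-k}\}$ are open, so that smallness of $\capK$ passes to the outer capacity $\capK^*$ -- and the pluripolarity criterion recorded in the paper is phrased in terms of $\capK^*$, whereas capacity convergence of the (generally discontinuous) good representatives only gives smallness of $\capK$ of the Borel sets $\{|\tilde u_{j_k}-\tilde u|\ge 2^{-k}\}$. You can close this in several ways: invoke Bedford--Taylor capacitability (so $\capK=\capK^*$ on Borel sets), use quasi-continuity of good representatives from Theorem \ref{t:W12-rep-limit}(ii), or -- most economically, and without any appeal to pluripolarity -- use the paper's own trick: $\mu\le\big(\ddc(v_1+\cdots+v_n)\big)^n\lesssim\capK(\cdot,\Omega)$ on Borel sets after normalizing the potential, so a subsequence chosen with $\capK\big(K\cap\{|\tilde u_{j_k}-\tilde u|\ge 2^{-k}\},\Omega\big)\le 2^{-k}$ already converges $\mu$-a.e.\ by Borel--Cantelli. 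With that point repaired (and the routine normalizations of $\|u_k-u\|_*$ and of the potentials of $\mu$ to apply the first assertion), your proof is complete.
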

 
\proof  
We first prove  \eqref{l:W12-L2-MA-1} for every $u \in W^{1,2}_*$ with $\|u\|_* \le 1$.
By Lemmas \ref{l:W12-operations} and \ref{l:operation-rep}, without loss of generality, we can suppose that $u$ is a bounded function. 
The point here is that the constants involving in our estimates do not depend on $u$.
By Lemma \ref{l:nice-CV}, we can find a sequence of smooth $u_k \in W^{1,2}_*$ (shrinking $\Omega$ if necessary) so that  
$$\|u_k\|_{L^\infty} \le \|u\|_{L^\infty}, \quad \|u_k\|_* \le \| u\|_*$$
 and $u_k \to u$ nicely. By Theorem \ref{t:W12-rep-limit} and extracting a subsequence if necessary, we can assume that $u_k \to u$ pointwise except on a pluripolar set. This together with Lebesgue's dominated convergence theorem gives
 $$\int_K |u|^2 d \mu  = \lim_{k \to \infty} \int_K |u_k|^2 d \mu.$$
The last integral is bounded uniformly by a constant times $\|u_k\|_*^2$ according to Lemma \ref{l:W12-d-dc-L2}. 
Hence,  \eqref{l:W12-L2-MA-1} holds for every $u$.

Observe that Lemma \ref{l:W12-L1-pot} for general $u$ can be obtained using the above functions $u_k$ and the last assertion in the proposition. Therefore, it remains to prove  this assertion.
Since $\mu\leq (\ddc (v_1+\cdots+v_n))^n$, we can replace all $v_j$ by $v_1+\cdots + v_n$ and assume that $\mu$ is a Monge-Amp\`ere measure with bounded potential.
Using Lemmas \ref{l:W12-operations} and \ref{l:operation-rep}, we can assume that $u_k$ and $u$ are  nonnegative.  Let $N$ be a big constant. 
Define 
$$u_{k,N}:=\min \{u_k, N\} \quad \text{and} \quad u_N:=\min\{u, N\}.$$
We have 
$0\leq u_{k,N} \le N$, $u_{k,N}=  u_k$ on $\{u_k \ge N\}$ and  similar properties for $u$ in place of  $u_k$. Observe that $u_{k,N} \to u_N$ nicely as $k \to \infty$, see Lemma \ref{l:W12-operations}(ii). Hence, by Corollary \ref{c:nice-cap-limits}, 
we have $u_{k,N} \to u_N$ in capacity.  

Using the first assertion in the proposition, we have 
$$\|  (u_k - u_{k,N}) \mu \|_K \le \int_{K \cap \{u_k \ge N\}} u_k d \mu \le  N^{-1} \int_K |u_k|^2 d \mu \lesssim 1/N$$
 and a similar estimate for $(u-u_N)\mu$. Together with the equality 
$$u_k-u = (u_k - u_{k,N})+ (u_{k,N} - u_N)  + (u_N - u),$$
we infer 
 $$\|(u_k- u)\mu\|_K  \le  \| (u_{k,N} - u_N) \mu\|_K+ O(1/N).$$
Denote by  $I_{k}$ the left-hand side of the last inequality. 
Let $\delta>0$ be a small constant. 
Using that $\mu$ is a Monge-Amp\`ere measure with bounded potential and the inequalities $0\leq u_{k,N}, u_N \leq N$,  we deduce from the last estimate that
\begin{eqnarray*} 
I_{k} & \le &  \bigg( \int_{K \cap \{|u_{k,N}- u_N|\ge \delta\}} |u_{k,N} - u_N| d\mu \bigg) + \bigg(\int_{K \cap \{|u_{k,N}- u_N| <\delta\}} |u_{k,N} - u_N| d\mu \bigg) +c/N \\
& \le & c \Big[N \capK\big( K \cap \{|u_{k,N}- u_N|\ge \delta\}, \Omega\big)+ \delta+ 1/N\Big]
\end{eqnarray*}
for some constant $c$ independent of $k,N,\delta$.  Letting $k$ tend to infinity, since  $u_{k,N} \to u_N$ in capacity, we obtain 
$$\limsup_{k \to \infty}I_{k}\le c[\delta+ 1/N]$$
for every $\delta, N>0$. Thus, $\lim_{k\to \infty}I_{k} =0$ and the proof is complete.
\endproof


\section{Proof of the main results}

We will consider Corollary \ref{cor-Lp-CV} at the end of this section. 
The proof of Theorem \ref{t:main} consists of two main steps. In Step 1, we show how to reduce the question to the case where $v_1, \ldots, v_n$ are smooth. In Step 2, we prove the desired result in the latter case.  Here is the precise formulation for Step 1.

\begin{proposition} \label{pro-step2tusmoothtoiMA} If Theorem \ref{t:main} holds for $v_j(x)= \|x\|^2$ for every $1 \le j \le n$,  then it holds (possibly with different constants $\alpha$ and $C$) for every H\"older continuous p.s.h.\  function $v_1, \ldots, v_n$ with H\"older exponent $\beta \in (0,1]$ on $\Omega$ such that $\|v_j\|_{\cali{C}^\beta} \le 1$ for $1 \le j \le n$.   
\end{proposition}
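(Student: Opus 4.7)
The plan is to reduce the H\"older case to the assumed model case by standard regularization combined with integration by parts. Fix $\Omega'$ with $K\Subset\Omega'\Subset\Omega$ and, for $\varepsilon>0$ small, let $v_{j,\varepsilon}:=v_j*\chi_\varepsilon$ on $\Omega'$ be the standard regularization from Section \ref{s:Sobolev}. Since $\|v_j\|_{\cali{C}^\beta}\leq 1$ and $v_j$ is p.s.h., standard mollifier estimates yield on $\Omega'$
\begin{equation*}
0\leq v_{j,\varepsilon}-v_j\leq C\varepsilon^\beta,\qquad \ddc v_{j,\varepsilon}\leq C\varepsilon^{\beta-2}\omega,
\end{equation*}
with $\omega$ the standard K\"ahler form on $\C^n$ and $C=C(\Omega,\Omega',\beta)$ independent of $j$ and $\varepsilon$. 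Expanding each factor gives the telescoping decomposition
\begin{equation*}
\ddc v_1\wedge\cdots\wedge\ddc v_n=\ddc v_{1,\varepsilon}\wedge\cdots\wedge\ddc v_{n,\varepsilon}+\sum_{\varnothing\neq S\subset\{1,\dots,n\}}\bigwedge_{j\in S}\ddc(v_j-v_{j,\varepsilon})\wedge\bigwedge_{j\notin S}\ddc v_{j,\varepsilon}.
\end{equation*}

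The smooth leading term is dominated by $(C\varepsilon^{\beta-2})^n\omega^n$, and applying the hypothesis to $v(x)=\|x\|^2$ (whose $\ddc v$ is a constant multiple of $\omega$) bounds the integral of $e^{\alpha_0 u^2}$ against it by $c_0(C\varepsilon^{\beta-2})^n$ for some absolute $\alpha_0,c_0>0$ and all $u$ with $\|u\|_*\leq 1$. Because this blows up as $\varepsilon\to 0$, I would not try to control $\int_K e^{\alpha u^2}\,d\mu$ directly but rather pass through a moment formulation: using $e^{\alpha u^2}=\sum_{k\geq 0}\alpha^k u^{2k}/k!$, it suffices to prove
\begin{equation*}
\int_K u^{2k}\,\ddc v_1\wedge\cdots\wedge\ddc v_n\leq C_1 M^k k!\qquad\text{for every }k\geq 0,
\end{equation*}
with $C_1,M$ depending only on $\Omega,K,\beta$, and then to sum over $k$ for $\alpha<1/M$. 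Each error term in the telescoping sum is treated as in the proof of Lemma \ref{l:W12-L1-pot}: integration by parts transfers the factors $\ddc(v_j-v_{j,\varepsilon})$ onto $u^{2k}$, and the Cauchy--Schwarz step in that proof produces a factor $\|v_j-v_{j,\varepsilon}\|_\infty^{1/2}\leq C\varepsilon^{\beta/2}$ per differenced variable, at the cost of lower-degree moments of $u$ against mixed wedge products controlled inductively via $i\partial u\wedge\dbar u\leq T$. Choosing $\varepsilon=\varepsilon(k)$ to be an appropriate negative power of $k$ then balances the $\varepsilon^{-2n+n\beta}$ loss from the smooth part against the $\varepsilon^{\beta/2}$ gain from each error term, which should yield the claimed moment bound.

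The main obstacle I anticipate is the combinatorial bookkeeping in the iterated integration by parts. On one hand, each application of $\ddc$ to $u^{2k}$ produces a $u^{2k-2}\,du\wedge\dc u$ piece (handled via $T$) and a $u^{2k-1}\,\ddc u$ piece that must be reabsorbed by Cauchy--Schwarz into $d(u^{2k})\wedge\dc(u^{2k})$-type forms, much as in Lemma \ref{l:W12-d-dc-L2}; on the other hand, each of the $2^n-1$ telescoping terms requires peeling off the $\ddc$'s one by one while keeping track of $k$, $|S|$, and the remaining wedge product. To justify the integrations by parts for a non-smooth $u$, I would first apply them to the truncations $u_N:=\min(|u|,N)$ via Lemma \ref{l:W12-cutoff} and the standard regularization, and then pass to the limit using Theorem \ref{t:W12-rep-limit} and Proposition \ref{p:W12-MA-general}, taking $N\to\infty$ once the moment constants are seen to be independent of $N$.
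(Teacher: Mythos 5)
Your plan correctly identifies the two key quantitative inputs: mollification of a $\Cc^\beta$ p.s.h.\ function $v_j$ gives $\|v_{j,\varepsilon}-v_j\|_{L^\infty}\lesssim\varepsilon^\beta$ and $\|v_{j,\varepsilon}\|_{\Cc^2}\lesssim\varepsilon^{-2}$, and Lemma \ref{l:W12-L1-pot} converts a single differenced factor $\ddc(v_1-v_{1,\varepsilon})$ into a gain $\varepsilon^{\beta/2}$, which must then be balanced against the $\varepsilon^{-2}$ loss from the smooth part. However, the route you choose to assemble these — moment bounds $\int_K u^{2k}\,d\mu\lesssim M^k k!$ together with a $k$-dependent choice $\varepsilon=\varepsilon(k)$ and a full telescoping over all subsets $S$ — has a genuine gap at the Cauchy--Schwarz step, and also does not match what the paper does.

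The gap: when you integrate by parts and apply Cauchy--Schwarz to $\int \tau\,u^{2k}\,\ddc(v_1-v_{1,\varepsilon})\wedge R$, the interior term is $2k\int\tau\,u^{2k-1}\,du\wedge\dc(v_1-v_{1,\varepsilon})\wedge R$, and Cauchy--Schwarz produces either $\bigl(\int u^{4k-2}\,du\wedge\dc u\wedge R\bigr)^{1/2}$ (a \emph{higher} moment) or, after redistributing powers, $\bigl(\int u^{2k-2}\,T\wedge R\bigr)^{1/2}$, which is a moment against the \emph{different} measure $T\wedge R$ whose type is not controlled inductively. So the claim that this "produces lower-degree moments of $u$ against mixed wedge products controlled inductively" is not justified; the moment induction does not visibly close. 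Moreover, your telescoping sum over $S\subset\{1,\dots,n\}$ produces terms with $|S|\geq 2$ differenced factors, for which Lemma \ref{l:W12-L1-pot} as stated does not apply (it has exactly one differenced factor and the remaining $v_j$ genuinely p.s.h.).

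The paper sidesteps both issues. Instead of moments, it proves a \emph{tail} estimate by induction on the number $l$ of H\"older p.s.h.\ factors, showing that
$\int_K (u-u_N)\,\ddc v_1\wedge\cdots\wedge\ddc v_l\wedge\omega^{n-l}\leq ce^{-\alpha N^2}$
uniformly, where $u_N=\min\{u,N\}$. The quantity $u-u_N$ is linear and nonnegative, so only Lemma \ref{l:W12-L1-pot} (in its $W^{1,2}_*$ form via Proposition \ref{p:W12-MA-general}) and the model Moser--Trudinger inequality enter, with no higher moments. Each inductive step regularizes only one factor ($v_1\mapsto v_{1,\varepsilon}$), so there is always exactly one differenced factor, and the induction hypothesis (after scaling by $\|v_{1,\varepsilon}\|_{\Cc^2}\lesssim\varepsilon^{-2}$) handles the smooth term. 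Balancing $\varepsilon^{-2}e^{-\alpha N^2}+\varepsilon^{\beta/2}$ in $\varepsilon$, \emph{not} in $k$, gives the uniform exponential decay, and the exponential integrability follows by summing the decaying measures of the level sets $\{u\geq N+1\}$. You would need to replace your moment formulation and $\varepsilon(k)$-balancing by this level-set induction (or substantially strengthen Lemma \ref{l:W12-L1-pot}-type estimates to handle higher moments and multiple differenced factors) to make the argument work.
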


For every H\"older continuous function $v$ on $\Omega$, recall that the standard H\"older norm $\|v\|_{\cali{C}^\beta}$ is given by   
$$\|v\|_{\cali{C}^\beta}:= \sup_{x,y \in \Omega, x \not = y} \frac{|v(x)- v(y)|}{|x-y|^\beta}.$$

\proof  Without loss of generality, we can assume $u \ge 0$, see Lemma \ref{l:W12-operations}. Let $K\Subset \Omega$ be a compact set.  By hypothesis,  there exist  strictly positive constants $\alpha$ and $c$ such that for every $u \in W^{1,2}_*(\Omega)$ with $\|u\|_* \le 1,$ we have
\begin{align} \label{ine-MoserTrLeb}
\int_K e^{\alpha u^2} d \Leb \le c.
\end{align} 
Let $\omega$ be the standard K\"ahler form on $\C^n$. Let $l$ be an integer in $[0,n].$   Put $u_N:= \min \{u,N\}$ which is in $W^{1,2}_*$ with bounded $*$-norm. 

\medskip\noindent
\textbf{Claim.} There exist positive constants $\alpha$ and $c$ such that  for every constant $N>0,$ we have
$$\int_{K} (u-u_N) \ddc v_1 \wedge \cdots \wedge \ddc v_l \wedge \omega^{n-l} \le c e^{-\alpha N^2},$$
uniformly in  p.s.h.\  functions $v_1, \ldots, v_l$ on $\Omega$ and $u \in W^{1,2}_*(\Omega)$ such that $\|v_j\|_{\cali{C}^\beta} \le 1$ for every $1 \le j \le l$ and $\|u\|_* \le 1$. 

\medskip \noindent
Note that since $u-u_N \ge 1$ on $\{u \ge N+1\}$, the claim with $l=n$ implies the following inequality
 $$\int_{\{u\ge N+1\}\cap K} \ddc v_1 \wedge \cdots  \wedge \ddc v_n \le c e^{-\alpha N^2}.$$
From this estimate, we easily deduce the desired assertion \eqref{e:main-th} (we change the constants $\alpha$ and $c$ if necessary).

It remains to prove the claim and this will be done by induction on $l$. When $l= 0,$ the claim is a direct consequence of  \eqref{ine-MoserTrLeb} (again, we change the constants $c$ and $\alpha$ if necessary).  Assume that the claim holds for $l-1$ instead of $l$. We need to prove it for $l$. 
Choose a nonnegative smooth function $\tau$ supported by a compact set $K'\Subset \Omega$ such that $\tau=1$ on $K$. Since $u-u_N\geq 0$, we only need to bound the integral
$$I:= \int_\Omega \tau (u- u_N) \ddc v_1 \wedge \cdots \wedge \ddc v_{l} \wedge \omega^{n-l}.$$

Let $v_{1,\varepsilon}$ be the standard regularization of $v_1$ for $0<\varepsilon<1$, see the beginning of Section \ref{s:Sobolev}. As $v_{1,\varepsilon}$ is obtained from $v_1$ by convolution and $\|v_1\|_{\Cc^\beta}\leq 1$, we have $\|v_{1,\varepsilon}-v_1\|_{L^\infty}\lesssim \varepsilon^\beta$  and $\|v_{1,\varepsilon}\|_{\cali{C}^2} \lesssim \varepsilon^{-2}$.  By induction hypothesis applied to $K'$ instead of $K$, one gets
$$I_1:=\int_\Omega \tau  (u-u_N)  \ddc v_{1, \varepsilon} \wedge \ddc v_2 \wedge \cdots \wedge \ddc v_{l} \wedge \omega^{n-l} \lesssim  \varepsilon^{-2} e^{-\alpha N^2}$$
for some constant $\alpha>0$. Define
$$I_2:=\int_\Omega \tau  (u- u_{N}) \ddc (v_1- v_{1,\varepsilon}) \wedge \ddc v_2 \wedge  \cdots \wedge \ddc v_{l} \wedge \omega^{n-l}.$$
By Lemma \ref{l:W12-L1-pot} and Proposition \ref{p:W12-MA-general}, we have
$$I_2 \lesssim \|v_1 - v_{1, \varepsilon}\|^{1/2}_{L^\infty} \lesssim \varepsilon^{\beta/2}.$$
Since $I = I_1 + I_2$, we deduce that
$$I \lesssim  \varepsilon^{-2} e^{-\alpha N^2}+ \varepsilon^{\beta/2}.$$
Letting $\varepsilon:= e^{-2(4+\beta)^{-1}\alpha N^2}$ gives $I \lesssim e^{-\beta(4+\beta)^{-1}\alpha N^2}.$ We obtain the desired claim by changing $\alpha$ to $\beta(4+\beta)^{-1}\alpha$.  This ends the proof of the proposition.
\endproof

It remains to prove Theorem \ref{t:main} for $v_j=\|x\|^2$. In this case, $\mu:= \ddc v_1 \wedge \cdots \wedge \ddc v_n$ is the standard  volume form on $\Omega$.  The idea is to use suitable slicing in order to reduce the problem to the case of dimension 1.  We first recall some facts about the slicing theory of closed positive currents. We refer to \cite{Federer,DS_superpotential} for details.  Our setting is simpler because  we only work with $(1,1)$-currents. 

\medskip 

\noindent
{\bf Slicing theory.}  Let $U$ and $V$ be bounded open subsets of $\C^{m_1}$ and $\C^{m_2}$ respectively. Let $\pi_U: U \times V \to U$ and $\pi_V: U \times V \to V$ be the natural projections. Observe that  if $R$ is a form with $L^1_{loc}$ coefficients (which is not necessarily closed or positive), we can always define the restriction $R_z$ of $R$ to the fiber $\pi_V^{-1}(z)$ for almost every  $z\in V$ (with respect to  the Lebesgue measure on $V$).  

Consider now a closed positive $(1,1)$-current $R$ on $U \times V$.  Write $R= \ddc w$ locally, where $w$ is a p.s.h.\  function. For $z\in V$, we define the slice $R_z$ of  $R$ on $\pi_V^{-1}(z)$ to be $\ddc \big(w(\cdot, z)\big)$ which is a closed positive $(1,1)$-current on $\pi_V^{-1}(z)$.  Let $A$ be the set of $z$ so that  $w(\cdot, z) \equiv -\infty$. Observe that $A$ is pluripolar and for $z\not \in A$, the slice $R_z$ is well-defined. One can see that the definition  of the slice $R_z$ is independent of the choice of a local potential $w$ of $R$. 
 
Let $\chi_{m_2}$ be a nonnegative smooth radial function with compact support on $\C^{m_2}$ such that $\int_{\C^{m_2}} \chi d \Leb =1$ and for every constant $\varepsilon >0$, we put $\chi_{m_2,\varepsilon}(z):= \varepsilon^{-2 m_2} \chi_{m_2}(\varepsilon^{-1}z)$.
The following result is straightforward. We just notice that  (iii) is a direct consequence of (ii). 

\begin{lemma} \label{le-slicetheory} 
$(i)$ Let $\Phi$ be a smooth form of suitable bi-degree with compact support in $U\times V$. Let $\Theta(z)$ be a smooth volume form on  $V$.  Then we have 
$$\langle R, \Phi \wedge \Theta(z) \rangle = \int_{z\in Z} \langle R_z, \Phi \rangle \Theta(z).$$

 $(ii)$ Then for $z_0 \not \in A$, we have 
$$\lim_{\varepsilon \to 0} R \wedge \pi_V^*\big(\chi_\varepsilon(z-z_0) \Leb(z)\big)= R_{z_0},$$
 where we identified $R_{z_0}$ with a current on $U \times V$  (when   $R$ is a $(1,1)$-form with $L^1_{loc}$ coefficients which is not necessarily closed or positive, then the same conclusion holds for almost every $z \in V$).  

$(iii)$ Let $R'$ be another closed positive $(1,1)$-current on $U \times V$ or a real $(1,1)$-form with $L^1_{loc}$ coefficients. Assume that $R' \le R$ on $U \times V$. Then for almost every $z \in Z$, we have $R'_z \le R_z$. 
\end{lemma}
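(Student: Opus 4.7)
The plan is to deduce all three parts from the local representation $R = \ddc w$ for a psh potential $w$, together with the definition $R_z = \ddc(w(\cdot, z))$ for $z \notin A$, where $A$ is the pluripolar set $\{z : w(\cdot, z) \equiv -\infty\}$. For part (i), I would localize with a partition of unity so that a potential $w$ of $R$ is available on a neighbourhood of $\supp(\Phi \wedge \Theta)$, and then shift the $\ddc$ onto the test form by integration by parts:
$$\langle R, \Phi \wedge \Theta\rangle = \int_{U \times V} w \cdot \ddc(\Phi \wedge \Theta).$$
Writing $d = d_x + d_z$ and using that $\Theta$ is of top degree in the $z$-variables, every term in $\ddc(\Phi \wedge \Theta)$ containing a $d_z$ or $d^c_z$ differential annihilates against $\Theta$, so only $\ddc_x \Phi \wedge \Theta$ contributes. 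Fubini's theorem in the $(x,z)$-variables together with a second integration by parts in $x$ then rewrites the integral as $\int_V \langle R_z, \Phi(\cdot, z)\rangle\, \Theta(z)$, which is (i).

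For (ii), I would apply (i) with the specific choice $\Theta(z) = \chi_{m_2,\varepsilon}(z-z_0)\Leb(z)$. Testing against a smooth form $\Phi$ of suitable bi-degree yields
$$\langle R \wedge \pi_V^*(\chi_{m_2,\varepsilon}(z-z_0)\Leb(z)), \Phi\rangle = \int_V \chi_{m_2,\varepsilon}(z-z_0)\, \langle R_z, \Phi\rangle\, d\Leb(z),$$
which is the value at $z_0$ of the convolution $\chi_{m_2,\varepsilon} * F$, where $F(z) := \langle R_z, \Phi\rangle$. Since $z_0 \notin A$, the function $w(\cdot, z_0)$ is a well-defined psh function on $\pi_V^{-1}(z_0)$, and standard submean and upper semicontinuity properties of the psh function $w$ on $U \times V$ imply that $F$ is locally integrable near $z_0$ and that $z_0$ is a Lebesgue point of $F$. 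Classical convolution approximation then forces $(\chi_{m_2,\varepsilon}*F)(z_0) \to F(z_0) = \langle R_{z_0}, \Phi\rangle$. For the parenthetical case where $R$ is only a $(1,1)$-form with $L^1_{loc}$ coefficients, the conclusion for almost every $z$ is an immediate application of the Lebesgue differentiation theorem applied componentwise to the coefficients of $R$.

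Part (iii) is, as the excerpt hints, a direct corollary of (ii). Given $R' \leq R$, wedging with the nonnegative smooth form $\pi_V^*(\chi_{m_2,\varepsilon}(z-z_0)\Leb(z))$ preserves the inequality for every $\varepsilon > 0$. Let $A'$ be the pluripolar set attached to a local potential of $R'$; for every $z_0 \notin A \cup A'$ I apply (ii) to both $R$ and $R'$ and pass to the limit $\varepsilon \to 0$ to obtain $R'_{z_0} \leq R_{z_0}$, and the complement of $A \cup A'$ is of full Lebesgue measure on $V$. The main technical obstacle, in my view, lies in verifying the Lebesgue-point property for $F$ in (ii); this ultimately reduces to the statement that, away from $A$, the psh potential $w$ does not concentrate mass on individual fibres of $\pi_V$, which is precisely what the exclusion of the pluripolar set $A$ is designed to ensure.
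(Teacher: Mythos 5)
Your proposal is correct, and there is essentially nothing in the paper to compare it against: the authors state that the lemma is straightforward and only remark that (iii) is a direct consequence of (ii), which is exactly how you obtain (iii), so your argument simply fills in the omitted details. Your parts (i) and (iii) are fine as written (for (iii) note that $R-R'$ is a positive $(1,1)$-current, so wedging with the strongly positive form $\pi_V^*\big(\chi_\varepsilon(z-z_0)\Leb(z)\big)$ and passing to the weak limit via (ii) preserves the inequality for almost every $z_0$). The only step deserving more care is your Lebesgue-point claim in (ii): that \emph{every} $z_0\notin A$ works is not a generic differentiation statement, and the cleanest justification is the plurisubharmonic structure you allude to. Either observe that $F(z)=\langle R_z,\Phi\rangle=\int_U w(x,z)\,\ddc_x\Phi(x,z)$ is, up to the smooth $z$-dependence of $\Phi$, a difference of two functions of the form $z\mapsto\int_U w(x,z)\,d\mu(x)$ with $\mu\geq 0$ smooth and compactly supported, which are psh in $z$ and finite at $z_0$ (and finite points of subharmonic functions are indeed Lebesgue points, by monotonicity of ball means applied to $v$ and $\max\{v,v(z_0)\}$); or, equivalently, note that the partial regularization $w*_z\chi_{m_2,\varepsilon}(\cdot,z_0)$ is psh in $x$ and decreases to $w(\cdot,z_0)\not\equiv-\infty$ as $\varepsilon\downarrow 0$, so it converges in $L^1_{loc}$ and hence $\ddc_x$ of it converges weakly to $R_{z_0}$. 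Finally, for the parenthetical $L^1_{loc}$ case, componentwise pointwise Lebesgue points are not quite enough to handle all test forms simultaneously; you should invoke the Lebesgue differentiation theorem for the Bochner-integrable map $z\mapsto R(\cdot,z)\in L^1(K)$ (or a countable dense family of test forms), which yields the conclusion for almost every $z$. These are refinements of your sketch rather than corrections; the approach is sound.
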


\medskip
We continue the proof of Theorem \ref{t:main} for $v_j= \|x\|^2$. 
We will need the following lemma.

\begin{lemma} \label{l:Sobolev-mean-T} 
Let $\eta(x,z)$ be a $(1,0)$-form with $L^2$ coefficients on $U \times V$. Let $T$ be a closed positive $(1,1)$-current of mass at most $1$ on $U \times V$ such that $i \,\eta \wedge \overline{\eta} \le T$ on $U \times V$. Define $\tilde{\eta}(x):= \int_{z\in V} \eta(x,z) d \Leb(z)$. Then there exists a positive constant $C$ independent of $\eta$ and $T$ such that 
$$i \, \tilde{\eta} \wedge \overline{\tilde{\eta}} \le C \, (\pi_U)_*\big(T \wedge \Leb(z)\big),$$
where  $\Leb(z)$ denotes both the Lebesgue measure and the standard volume form on $V$. Moreover, $(\pi_U)_*\big(T \wedge \Leb(z)\big)$ is a closed positive $(1,1)$-current on $U$ whose mass is bounded by a constant independent of $\eta$ and $T$.
\end{lemma}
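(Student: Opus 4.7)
The plan is to prove both inequalities via a pointwise Cauchy--Schwarz in the $z$-variable, after identifying $\tilde{\eta}$ with the push-forward $(\pi_U)_*\bigl(\eta \wedge \pi_V^*\Leb(z)\bigr)$. The first step will be to decompose $\eta = \eta^{(U)} + \eta^{(V)}$ into its $dx$- and $dz$-components. Since $\pi_V^*\Leb(z)$ is, up to a positive constant, equal to $\prod_{k=1}^{m_2} i\,dz_k \wedge d\bar z_k$, wedging it with any $dz_k$ or $d\bar z_k$ gives zero. Hence
\[
\eta \wedge \pi_V^*\Leb(z) = \eta^{(U)} \wedge \pi_V^*\Leb(z), \qquad i\eta\wedge\bar\eta \wedge \pi_V^*\Leb(z) = i\eta^{(U)}\wedge\overline{\eta^{(U)}}\wedge \pi_V^*\Leb(z),
\]
confirming that $\tilde{\eta}$ really equals the average in $z$ of the $dx$-part of $\eta$. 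This bi-degree bookkeeping is the only place where the specific form of $\Leb(z)$ is used.

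For the main inequality, I will write $\eta^{(U)} = \sum_j a_j(x,z)\,dx_j$, so that $\tilde{\eta} = \sum_j \tilde{a}_j(x)\,dx_j$ with $\tilde{a}_j(x) = \int_V a_j(x,z)\,d\Leb(z)$. Cauchy--Schwarz on $L^2(V, d\Leb)$ applied to the function $z \mapsto \sum_j \lambda_j a_j(x,z)$ and the constant $1$ yields, for every $\lambda \in \C^{m_1}$,
\[
\Bigl|\sum_j \lambda_j \tilde{a}_j(x)\Bigr|^2 \leq \Leb(V)\int_V \Bigl|\sum_j \lambda_j a_j(x,z)\Bigr|^2 d\Leb(z).
\]
This is exactly the pointwise comparison of positive Hermitian $(1,1)$-forms
\[
i\,\tilde{\eta}(x)\wedge\overline{\tilde{\eta}(x)} \leq \Leb(V)\cdot (\pi_U)_*\bigl(i\eta^{(U)}\wedge\overline{\eta^{(U)}}\wedge \pi_V^*\Leb(z)\bigr)(x).
\]
Combining this with the identity from the previous paragraph and with the hypothesis $i\eta\wedge\bar\eta \leq T$ (an inequality preserved by wedging with the positive form $\pi_V^*\Leb(z)$ and by push-forward) produces the desired estimate with $C = \Leb(V)$.

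For the second assertion, the three properties of $S := (\pi_U)_*(T\wedge \pi_V^*\Leb(z))$ will follow from general facts. Positivity is immediate, since push-forward of positive currents is positive. Closedness follows from $dT = 0$ and $d\,\Leb(z) = 0$ (top degree on $V$) together with $d\circ(\pi_U)_* = (\pi_U)_*\circ d$. The mass bound comes from the pointwise comparison $\pi_U^*\omega_U^{m_1-1}\wedge \pi_V^*\Leb(z) \leq c\,\omega_{U\times V}^{m_1+m_2-1}$ on $U\times V$, where $\omega_{U\times V} = \pi_U^*\omega_U + \pi_V^*\omega_V$, giving $\|S\|_U \lesssim \|T\|_{U\times V} \leq 1$.

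The only mild subtlety is the well-definedness of the push-forward, since $T$ is not a priori compactly supported in $U\times V$. This is handled by the boundedness of $V$: the finiteness of $\Leb(V)$ makes the pairing $\langle T, \pi_U^*\phi \wedge \pi_V^*\Leb(z)\rangle$ absolutely convergent for every test form $\phi$ compactly supported in $U$. Apart from this bookkeeping, the entire argument reduces to Cauchy--Schwarz and degree counting, with no substantial obstacle.
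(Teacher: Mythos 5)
Your proof of the main inequality is correct and in substance it is the same Cauchy--Schwarz argument as the paper's: the paper symmetrizes, writing $\langle i\eta_z\wedge\overline\eta_{z'},\Phi\rangle\le\frac12\big(\langle i\eta_z\wedge\overline\eta_z,\Phi\rangle+\langle i\eta_{z'}\wedge\overline\eta_{z'},\Phi\rangle\big)$ under a double integral over $V\times V$ tested against positive forms $\Phi$, while you perform the equivalent coefficient-wise Cauchy--Schwarz in $L^2(V,d\Leb)$ pointwise in $x$; both give a constant of the order of $\Leb(V)$, and your bi-degree remark (only the $dx$-part of $\eta$ survives wedging with $\pi_V^*\Leb(z)$) is implicit in the paper's slicing formulation. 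Your mass bound for $(\pi_U)_*\big(T\wedge\Leb(z)\big)$ is also the paper's argument.

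The genuine gap is the closedness of $S:=(\pi_U)_*\big(T\wedge\pi_V^*\Leb(z)\big)$. You invoke $d\circ(\pi_U)_*=(\pi_U)_*\circ d$, but that identity is only automatic for proper push-forwards, and $\pi_U$ is not proper on $\supp\big(T\wedge\pi_V^*\Leb(z)\big)$: for a test form $\phi$ on $U$, the form $\pi_U^*\phi\wedge\pi_V^*\Leb(z)$ is supported on $\supp\phi\times V$, which is not compact in $U\times V$ (it accumulates on $U\times\partial V$). The hypothesis $dT=0$ only gives $\langle T,d\psi\rangle=0$ for $\psi$ with compact support, so it does not directly yield $\langle T,d(\pi_U^*\phi\wedge\pi_V^*\Leb(z))\rangle=0$, which is exactly what $\langle S,d\phi\rangle=0$ amounts to. You noticed the non-properness when checking that $S$ is well defined, but the same issue affects closedness and must be addressed. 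The standard repair --- and the one the paper uses --- is to truncate in $z$: choose smooth functions $0\le\chi_k(z)\le1$ with compact support in $V$ increasing to $1$. Then $\chi_k(z)\Leb(z)$ is closed (top degree in $z$, and $\chi_k$ is independent of $x$), so $T\wedge\chi_k(z)\Leb(z)$ is closed; $\pi_U$ is proper on its support, hence $(\pi_U)_*\big(T\wedge\chi_k(z)\Leb(z)\big)$ is closed; finally let $k\to\infty$, the convergence being justified by the finite mass of $T$ (dominated convergence), so $S$ is a limit of closed currents and therefore closed. With this insertion your argument is complete.
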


\proof  Let $\Phi$ be a weakly positive smooth form of right bi-degree 
with compact support on $U$.  Using  the Cauchy-Schwarz inequality implies that 
\begin{align} \label{ine-sumDiracetattabar}
\langle i \, \tilde{\eta} \wedge \overline{\tilde{\eta}}, \Phi \rangle &= \int_{(z,z') \in V^2}  \langle i \,\eta_{z} \wedge \overline{\eta}_{z'}, \Phi \rangle  d \Leb(z,z') \\
\nonumber
& \le \frac{1}{2}\int_{(z,z') \in V^2}  \big(\langle i \,\eta_{z} \wedge \overline{\eta}_{z}, \Phi \rangle + \langle i \,\eta_{z'} \wedge \overline{\eta}_{z'}, \Phi \rangle\big)  d \Leb(z,z') \\
\nonumber
& \lesssim \int_{V} \langle i \,\eta_{z} \wedge \overline{\eta}_{z}, \Phi \rangle d \Leb(z) \\
\nonumber 
& =  \int_{U \times V}  i \,\eta \wedge \overline{\eta}\wedge \Leb(z) \wedge \pi_V^*(\Phi)  \le  \big \langle (\pi_U)_*\big(T \wedge \Leb(z)\big), \Phi \big \rangle. 
\end{align}
This implies the first assertion in the lemma.

If $\omega(x)$ denotes the standard K\"ahler form on $U$, then the mass of $(\pi_U)_*\big(T \wedge \Leb(z)\big)$ is equal to the mass of the measure
$$T \wedge \Leb(z)\wedge \omega(x)^{m_1-1}.$$ 
Clearly, this mass is bounded by a constant because the mass of $T$ is at most equal to 1 by assumption. It remains to show that $(\pi_U)_*\big(T \wedge \Leb(z)\big)$ is closed. Let $0\leq\chi_k(z)\leq 1$ be a sequence of smooth functions with compact support in $V$ which increases to 1. We have
$$(\pi_U)_*\big(T \wedge \Leb(z)\big) = \lim_{k\to\infty} (\pi_U)_*\big(T \wedge \chi_k(z) \Leb(z)\big).$$
Observe that $\chi_k(z) \Leb(z)$ is closed because it is of maximal degree in $z$. Therefore, the current $T \wedge \chi_k(z) \Leb(z)$ is also closed.
Since $\pi_U$ is proper on the support of $T \wedge \chi_k(z) \Leb(z)$, we deduce that $ (\pi_U)_*\big(T \wedge \chi_k(z) \Leb(z)\big)$ is closed and the last identity implies the result.
\endproof

\begin{lemma} \label{le-chondiemtam2}  Let $U,V$ be open subsets in $\C^{m_1}, \C^{m_2}$ respectively.  Let $u$ be a locally integrable function in $U \times V$ such that $\partial u \in L^2_{loc}(U \times V)$.  Let $T$ be   a closed positive $(1,1)$-current  on $U \times V$ such that $i \partial u \wedge \overline \partial u \le T.$   Then, for almost every $z \in V$, we have that $\partial (u |_{U \times \{z\}}) \in L^2_{loc}(U)$ and  
\begin{align}\label{ine-sliceuPsix'W12}
i \partial (u |_{U \times \{z\}}) \wedge \overline \partial (u |_{U \times \{z\}}) \le T |_{U \times \{z\}}.
\end{align}
\end{lemma}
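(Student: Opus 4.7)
My plan is to compare $i\partial u \wedge \bar\partial u$ with its natural restriction to each slice $U\times\{z\}$ and then quote Lemma~\ref{le-slicetheory}(iii). First, split the $(1,0)$-differential into its $U$- and $V$-parts, $\partial u = \partial_x u + \partial_z u$. Since $u \in L^1_{\loc}$ and $\partial u \in L^2_{\loc}(U\times V)$ by assumption, both $\partial_x u$ and $\partial_z u$ are well-defined and lie in $L^2_{\loc}$. Expanding,
$$i\,\partial u \wedge \bar\partial u \;=\; i\,\partial_x u \wedge \bar\partial_x u \;+\; \Theta,$$
where every monomial appearing in $\Theta$ contains at least one differential $dz_k$ or $d\bar z_k$. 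In particular, only the first summand will contribute to a slice.

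Second, by Fubini applied to the $L^2_{\loc}$ coefficients of $\partial_x u$, for almost every $z \in V$ the function $u(\cdot,z)$ lies in $W^{1,2}_{\loc}(U)$ and its weak $(1,0)$-derivative equals $(\partial_x u)(\cdot,z)$ a.e.\ on $U$. Hence the $(1,1)$-form
$$i\,\partial(u|_{U\times\{z\}}) \wedge \bar\partial(u|_{U\times\{z\}})$$
is well-defined with $L^1_{\loc}$ coefficients for a.e.\ $z \in V$, which already yields the first claim of the lemma.

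Third, the key step is to identify this form with the slice, in the sense of Lemma~\ref{le-slicetheory}(ii), of the $L^1_{\loc}$ $(1,1)$-form $i\partial u \wedge \bar\partial u$. Convolution along $V$ with $\pi_V^*\bigl(\chi_{m_2,\varepsilon}(z-z_0)\Leb(z)\bigr)$ kills the $\Theta$-part (which carries a $dz_k$ or $d\bar z_k$ factor) and retains only the coefficients of $i\,\partial_x u \wedge \bar\partial_x u$, whose $V$-averages converge by Fubini to their fiber values at $z_0$ for a.e.\ $z_0$. This gives the identification
$$\bigl(i\,\partial u \wedge \bar\partial u\bigr)_{z_0} \;=\; i\,\partial\bigl(u|_{U\times\{z_0\}}\bigr) \wedge \bar\partial\bigl(u|_{U\times\{z_0\}}\bigr)$$
for a.e.\ $z_0 \in V$. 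Applying Lemma~\ref{le-slicetheory}(iii) to the hypothesis $i\partial u \wedge \bar\partial u \le T$ then yields the desired inequality \eqref{ine-sliceuPsix'W12}.

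\paragraph{Main obstacle.} The delicate point is the third step: matching the convolution-based slice of the $L^1_{\loc}$ form $i\partial u \wedge \bar\partial u$ with the honest $(1,1)$-form built from $u(\cdot,z_0)$. This requires commuting weak differentiation with restriction to fibers for almost every fiber — a standard but technical Fubini-type manipulation that must be performed carefully because the form $i\partial u \wedge \bar\partial u$ is neither closed nor smooth, only an $L^1_{\loc}$ positive $(1,1)$-form.
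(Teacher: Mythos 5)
Your proof is correct and takes essentially the same route as the paper: invoke Fubini to match the fiberwise weak derivatives and the fiberwise quadratic form with the slices of $\partial u$ and $i\partial u\wedge\bar\partial u$, then apply Lemma~\ref{le-slicetheory}(iii) to the inequality $i\partial u\wedge\bar\partial u\le T$. You merely make explicit the decomposition $\partial u=\partial_x u+\partial_z u$ and the observation that the $dz_k,d\bar z_k$-bearing part dies against $\pi_V^*(\Leb(z))$, which the paper leaves implicit in the phrase ``by Fubini's theorem.''
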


\proof By Fubini's theorem, for almost everywhere $z$, the forms $\partial (u |_{U \times \{z\}})$ and $\partial (u |_{U \times \{z\}}) \wedge \overline \partial (u |_{U \times \{z\}})$ are equal to the slice of $\partial u$, $\partial u \wedge \overline \partial u$ along $U \times \{z\}$, respectively. This combined with  Lemma \ref{le-slicetheory} gives the desired assertion.  
\endproof

Let $\D$ be the unit disk in $\C$. We will need the following basic observation which can be deduced using the Riesz representation of subharmonic functions (see \cite[Theorem 3.3.6]{Hormander}). 

\begin{lemma} \label{le-chanmasssubharwmonic} Let $\varphi$ be a negative subharmonic function on $\D$ and $\varphi(0) \ge -1$. Let $K$ be a compact subset of $\D$. Then there exists a constant $C$ independent of $\varphi$ such that $\|i \partial \overline \partial \varphi\|_K \le C$. 
\end{lemma}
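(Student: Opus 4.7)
The plan is to combine the submean inequality (to get an $L^1$ bound on $\varphi$ from the pointwise bound $\varphi(0)\geq -1$) with an integration by parts against a smooth cutoff. This is a completely standard two-line argument; I would not invoke the full Riesz representation even though it is mentioned in the hint.

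First I would fix the geometry: choose a smooth nonnegative function $\chi$ with compact support in $\D$ such that $\chi\equiv 1$ on a neighbourhood of $K$, and pick $r\in(0,1)$ with $\supp\chi\Subset\D(0,r)$. All constants below will depend only on $K$ through this fixed $\chi$ and $r$.

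Next I would extract the $L^1$ bound on $\varphi$. Since $\varphi$ is subharmonic and nonpositive on $\D$, the submean inequality applied at the origin on the disk $\D(0,r)$ gives
\begin{align*}
-1 \ \leq\ \varphi(0)\ \leq\ \frac{1}{\pi r^2}\int_{\D(0,r)} \varphi\, d\Leb.
\end{align*}
Because $\varphi\leq 0$, this rearranges to $\int_{\D(0,r)}|\varphi|\,d\Leb = -\int_{\D(0,r)}\varphi\,d\Leb \leq \pi r^2$, which is a uniform bound independent of $\varphi$.

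Finally I would integrate by parts. Since $i\partial\overline\partial\varphi$ is a positive measure and $0\leq\chi\leq 1$ equals $1$ on $K$,
\begin{align*}
\|i\partial\overline\partial\varphi\|_K \ \leq\ \int_\D \chi\cdot i\partial\overline\partial\varphi \ =\ \int_\D \varphi\cdot i\partial\overline\partial\chi,
\end{align*}
where the last equality is the distributional integration by parts, valid because $\varphi\in L^1_{\loc}(\D)$ and $\chi$ is smooth with compact support. Bounding the last integral by $\|i\partial\overline\partial\chi\|_{L^\infty}\,\int_{\D(0,r)}|\varphi|\,d\Leb$ and using the previous step yields the required estimate with $C = \pi r^2\,\|i\partial\overline\partial\chi\|_{L^\infty}$.

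There is no real obstacle here; the only subtlety is that $\varphi$ may be $-\infty$ on a polar set, but this is harmless since subharmonic functions are locally integrable, so each integral above is well defined and the distributional integration by parts is standard.
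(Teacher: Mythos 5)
Your proof is correct, and it takes a slightly different route from the one the paper indicates. The paper gives no written argument for this lemma: it simply notes that it ``can be deduced using the Riesz representation of subharmonic functions'' (H\"ormander, Theorem 3.3.6), i.e.\ one writes $\varphi$ on a disc $\D(0,r)\Supset K$ as a Green potential of $\mu:=i\ddbar\varphi$ plus a nonpositive harmonic function, and then the bound $G_r(0,w)\le -c<0$ for $w$ in a neighbourhood of $K$ turns the hypothesis $\varphi(0)\ge -1$ directly into $\mu(K)\le c^{-1}$. You instead convert the pointwise bound at $0$ into a uniform $L^1$ bound via the submean inequality (using $\varphi\le 0$ crucially, so that the one-sided mean value inequality controls $\int|\varphi|$), and then transfer that $L^1$ bound to a mass bound on $i\ddbar\varphi$ by pairing with a cutoff and integrating by parts --- essentially the one-dimensional Chern--Levine--Nirenberg argument the paper itself uses in Lemma \ref{l:CLN+}. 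Both arguments are two lines and give a constant depending only on $K$ (through $r$ and $\chi$); the Riesz route reads off the mass bound directly from the kernel, while yours is more self-contained and avoids quoting the representation theorem. The small points in your write-up (that $\int\varphi\, i\ddbar\chi$ need not be a priori nonnegative, so you bound its absolute value; that $\varphi\in L^1_{\rm loc}$ justifies the distributional integration by parts; the interpretation of $\|i\ddbar\chi\|_{L^\infty}$ as a bound on the coefficient) are all handled correctly or are harmless normalization issues, so there is no gap.
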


\begin{proof}[End of the proof of Theorem \ref{t:main}] We prove (\ref{e:main-th}) by induction. For $n=1$, this is Theorem \ref{t:Moser}. We assume that (\ref{e:main-th})  holds for every dimension at most $n-1$. We need to prove that (\ref{e:main-th})  holds for dimension $n$.  Let $u \in W^{1,2}_*$ with $\|u\|_*=1.$  Let $T$ be a closed positive current of bi-degree $(1,1)$ such that 
\begin{align}\label{ine-uTfinalstep}
i \partial u \wedge \overline \partial u \le T
\end{align}
 and $\|T\|_\Omega \le 1.$ Let $K \Subset  \Omega$.   Since our problem is local, by solving $\ddc$-equation, without loss of generality, we can assume that $T= \ddc \varphi$ for some p.s.h.\  function $\varphi$ on $\Omega$ and $\|\varphi\|_{L^1(\Omega)}\le C$, where $C$ is a constant independent of $T$ (see \cite[Lemma 2.1]{Vu_nonkahler_topo_degree} for example). Thus,  for every constant $M>0$ and  $F_{M}: =\{|\varphi|\le M\}$, we get 
\begin{align}\label{ine-omegabaclslashAm}
\Leb(\Omega \backslash F_M)= \int_{\{|\varphi| > M\}} \omega^n \le M^{-1} \int_{\{|\varphi|> M\}} |\varphi| \omega^n \lesssim M^{-1}.
\end{align}
By decomposing $K$ into the union of a finite number of small compact sets, it suffices to consider the case where  the diameter of $K$ is as small as we want. Hence, by using a change of coordinates,  we can assume that the closure of the unit ball $\B$ is contained in $\Omega$ and $K$ is contained in $\{3/4 \le \|x\| \le 4/5\}$. By (\ref{ine-omegabaclslashAm}), we see that if $M$ is big enough, there is a point $a \in \B$ with $\|a\| \le 1/100$ so that $\varphi(a)> -M$. Using a linear change of coordinates again, we can assume furthermore that $a=0$. Thus, $\varphi(0)>-M$ for some fixed constant $M$ big enough and independent of $u,T.$

The set of complex lines passing through the origin is parameterized by the complex projective space $\P^{n-1}$. For $y \in \P^{n-1}$, denote by $L_y$ the complex line given by $y$ and $\D_y:= \B \cap L_y$  which  is the unit disc of $L_y$. Put $A_y:= \{x \in \B: \|x\| > 1/10 \} \cap \D_y$. We can identify $A_y$ with an annulus $A:= \{z \in \D: |z|> 1/10\}$ in $\D$ (here we consider a linear isometry from $\D_y$ to the unit disk $\D$). 
 Let   $u_y:= u|_{L_y}$ for every $y \in \P^{n-1}$ and 
$$ \underline{u}(y):= \int_{z \in A_y} u_y(z)  d \Leb(z), \quad \tilde{u}_y:= u_y - \underline{u}(y).$$
The last functions are well-defined for almost every $y$.  Denote by $T|_{\D_y}$ the slice of $T$ along $\D_y$.  Recall that $T|_{\D_y}= \ddc (\varphi|_{\D_y})$. Observe that since $\varphi|_{\D_y}(0)= \varphi(0)>-M$, the mass of $T|_{\D_y}$ on $\D_y$ is bounded by  a constant independent of $y,T$ (note here that $\overline \B \subset \Omega$). On the other hand,  by Lemma  \ref {le-chondiemtam2},  for almost every $y$, we have that  $u_y \in W^{1,2}_*(\D_y)$ and using the definition of $\tilde{u}_y$,  
$$\|\tilde u_y\|^2_* \lesssim \|\tilde u_y\|^2_{L^2}+  \|  T|_{\D_y}\| \lesssim  \|  T|_{\D_y}\| $$
by Poincar\'e's inequality (\cite[page 275]{Evans_pde}).  It follows that 
\begin{align}\label{ine0chuansaocuauPsix0}
\|\tilde u_y\|_* \lesssim \|  T|_{\D_y}\| \lesssim 1.
\end{align}
Let $\alpha>0$ be a fixed small constant. Let $A':= \{z \in \D:  2/3 \le |z| \le 5/6\}$ which is  compact in $A$. Let $A'_y$ be the image of $A'$ under the natural identification $\D_y \approx \D$.   By construction, we have $K \subset \cup_{y \in \P^{n-1}} A'_y$. Note that $|u_y|^2 \le 2(|\tilde{u}_y|^2+ |\underline{u}(y)|^2)$.  By this and  Fubini's theorem, we get 
\begin{align*}
\int_{K} e^{\alpha |u|^2} d \Leb & \lesssim  \int_{y \in \P^{n-1}} \bigg(\int_{A'_y} e^{\alpha | u_y|^2} d \Leb\bigg) d \Leb(y)\\
& \lesssim  \int_{y \in \P^{n-1}} \bigg(\int_{A'_y} e^{2 \alpha (| \tilde{u}_y|^2+ |\underline{u}(y)|^2)} d \Leb\bigg) d \Leb(y)\\
& \lesssim  \int_{y \in \P^{n-1}} \bigg(\int_{A'_y} e^{4 \alpha | \tilde u_y|^2} d \Leb\bigg) d \Leb(y)+ \int_{y \in \P^{n-1}} e^{4\alpha |\underline{u}(y)|^2} d \Leb(y)
\end{align*}
by H\"older's inequality. Denote by $I_1, I_2$ the first and second terms respectively in the right-hand side of the last inequality. By (\ref{ine0chuansaocuauPsix0}) and induction hypothesis, the number $I_1$ is uniformly bounded in $u$ for some constant $\alpha>0$ independent of $u$ (note that $A'_y \approx A' \Subset A \approx A_y$). It remains to check that $I_2$ is also uniformly bounded.    

Cover $\P^{n-1}$ by a finite number of small local charts $U$ such that $U \times A$ is identified with a  chart in $\B$ and $\{y\} \times A$  is identified with $A_y$ for every $y \in U$.  Thus, we get  
$$\|\underline{u}\|_{L^2(U)} \lesssim \| u \|_{L^2(\B)} \lesssim 1.$$
By (\ref{ine-uTfinalstep}) we can apply    Lemma \ref{l:Sobolev-mean-T} to $\eta(y,z):= u(y,z)$ on $U \times A$. Hence one obtains
$$i \partial \underline{u} \wedge \overline \partial \underline{u} \lesssim \pi_*\big(T \wedge (i dz \wedge d \bar z)\big),$$
where  $\pi$ denotes  the natural projection from $U \times  A$ to the first component. Moreover, the right-hand side of the last inequality is a closed positive $(1,1)$-current on $U$ with bounded mass.  We deduce that $\underline{u} \in W^{1,2}_*(U)$ with $*$-norm uniformly bounded.  By induction hypothesis applied to $\underline{u}$, the integral $I_2$ is bounded uniformly for some constant $\alpha>0$ (we can slightly reduce $U$ in order to apply the induction hypothesis).  This finishes the proof.  
\end{proof}

We have the following result.

\begin{proposition} \label{p:CV-exp}
Let $K$ be a compact subset of an open set $\Omega$ in $\C^n$. Let $(u_k)_k\subset W^{1,2}_*(\Omega)$ be a sequence converging weakly to a function $u\in W^{1,2}_*(\Omega)$. Assume that $\|u_k\|_*\leq 1$. Then there is a positive constant $\alpha$ depending only on $K$ and $\Omega$ such that 
$$\lim_{k\to\infty} \|e^{\alpha (u_k-u)^2}-1\|_{L^1(K)}=0.$$  
\end{proposition}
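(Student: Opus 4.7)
The plan is to apply Theorem \ref{t:main} to the difference $u_k-u$ to obtain a uniform higher-integrability bound on $e^{\alpha_1(u_k-u)^2}$ against the Lebesgue measure, and then combine this with the $L^p_{\loc}$-convergence supplied by Corollary \ref{cor-Lp-CV} through a standard argument.

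First, I would verify that $\|u_k-u\|_*$ is uniformly bounded by a constant $M$ independent of the sequence. Since $\|\cdot\|_*$ is a norm, this reduces via the triangle inequality and the hypothesis $\|u_k\|_*\le 1$ to the bound $\|u\|_*\le 1$. The latter is a lower semicontinuity property of $\|\cdot\|_*$ under weak convergence: choosing for each $k$ a minimal current $T_k$ dominating $i\partial u_k\wedge\dbar u_k$, so that $\|T_k\|_\Omega+\|u_k\|_{L^2(\Omega)}^2\le 1$, a subsequence extraction gives $T_k\to T$ with $\|T\|_\Omega\le\liminf\|T_k\|_\Omega$; the result from \cite{Vigny} recalled in Section \ref{s:Sobolev} then yields $i\partial u\wedge\dbar u\le T$, while Fatou gives $\|u\|_{L^2(\Omega)}\le\liminf\|u_k\|_{L^2(\Omega)}$, so $\|u\|_*^2\le 1$. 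Thus $M:=2$ is a universal upper bound on $\|u_k-u\|_*$.

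Next, I would shrink to a bounded open $\Omega'$ with $K\Subset\Omega'\Subset\Omega$. Since restriction decreases the $*$-norm, $\|u_k-u\|_{*,\Omega'}\le M$ for every $k$. Pick $v_1(x)=\cdots=v_n(x)=c\|x\|^2$ with $c>0$ small enough that each $v_j$ has $\mathcal{C}^\beta(\Omega')$-norm at most $1$; then $\ddc v_1\wedge\cdots\wedge\ddc v_n$ is a positive constant multiple of the Lebesgue measure on $\Omega'$. Applying Theorem \ref{t:main} on $\Omega'$ to $(u_k-u)/M$ produces constants $\alpha_1,c_0>0$ depending only on $K$ and $\Omega$ such that
$$\int_K e^{\alpha_1(u_k-u)^2}\,d\Leb\le c_0\qquad\text{for every }k.$$

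Finally, set $\alpha:=\alpha_1/2$. By Corollary \ref{cor-Lp-CV}, $u_k\to u$ in $L^1(K)$, so $u_k-u\to 0$ in Lebesgue measure on $K$. For any $\delta>0$, split
$$\|e^{\alpha(u_k-u)^2}-1\|_{L^1(K)}\le\bigl(e^{\alpha\delta^2}-1\bigr)\Leb(K)+\int_{K\cap\{|u_k-u|>\delta\}}e^{\alpha(u_k-u)^2}\,d\Leb,$$
and use the Cauchy--Schwarz inequality together with the previous bound to estimate
$$\int_{K\cap\{|u_k-u|>\delta\}}e^{\alpha(u_k-u)^2}\,d\Leb\le\Leb\bigl(K\cap\{|u_k-u|>\delta\}\bigr)^{1/2}\,c_0^{1/2},$$
which tends to $0$ as $k\to\infty$. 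Letting $\delta\to 0$ gives the proposition. The main obstacle is the first step: extracting a bound on $\|u_k-u\|_*$ that depends only on $K$ and $\Omega$ (via the hypothesis $\|u_k\|_*\le 1$), so that the resulting $\alpha$ does not depend on the particular sequence or its weak limit.
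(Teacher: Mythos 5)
Your argument is essentially the paper's: bound $\|u_k-u\|_*$ by $2$ (weak lower semicontinuity of the $*$-norm giving $\|u\|_*\le 1$), apply Theorem \ref{t:main} with $v_j$ a multiple of $\|x\|^2$ to get a uniform bound $\int_K e^{\alpha_1(u_k-u)^2}\,d\Leb\le c_0$, and then split $K$ into the set where $|u_k-u|$ is small (where $e^{\alpha t^2}-1$ is controlled by convergence of $u_k$ to $u$) and the set where it is large (handled by Cauchy--Schwarz against the uniform exponential bound). The paper splits at a large threshold $N$ and uses $e^{\alpha t^2}-1\lesssim_N t^2$ together with $L^2$-convergence, while you split at a small $\delta$ and only use convergence in measure; these are the same mechanism.

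There is, however, one genuine flaw as written: you invoke Corollary \ref{cor-Lp-CV} to get $u_k\to u$ in $L^1(K)$, but in the paper Corollary \ref{cor-Lp-CV} is itself deduced from Proposition \ref{p:CV-exp} (via $|t|^p\lesssim e^{\alpha t^2}-1$), so this citation is circular. The fix is immediate and is exactly what the paper does: weak convergence in $W^{1,2}_*$ already gives $u_k\to u$ in $L^2_\loc$ by Rellich's theorem, since $W^{1,2}_*$ embeds continuously into $W^{1,2}$ (this is recalled in Section \ref{s:Sobolev}); this yields the convergence in Lebesgue measure on $K$ that your final splitting needs, without any appeal to Corollary \ref{cor-Lp-CV}. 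With that substitution your proof is correct and coincides with the paper's.
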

\proof
Observe that $\|u\|_*\leq 1$ and hence $\|u_k-u\|_*\leq 2$. 
Define $v_k:=u_k-u$, and for $N\in\R_{>0}$, define $K_{k,N}:=\{|v_k|\geq N\}\cap K$. Note that $\|v_k\|_*$ is bounded uniformly in $k$.
By this and Theorem \ref{t:main}, there are positive constants $\alpha$ and $c$ such that (we change the constant $\alpha$ in order to get the factor 2)
$$\int_K e^{2\alpha |v_k|^2} d\Leb \leq c.$$
Using that $|v_k|\geq N$ on $K_{k,N}$, we deduce that
$$\Leb(K_{k,N}) \leq e^{-2\alpha N^2} \int_{K_{k,N}} e^{2\alpha |v_k|^2} d\Leb \leq c e^{-2\alpha N^2}$$
and by Cauchy-Schwarz inequality
\begin{equation} \label{e:K_k_N}
 \int_{K_{k,N}} e^{\alpha |v_k|^2} d\Leb \leq \Leb(K_{k,N})^{1/2} \Big( \int_{K_{k,N}} e^{2\alpha |v_k|^2} d\Leb\Big)^{1/2} \leq c e^{-\alpha N^2}.
 \end{equation}

On another hand, on $K\setminus K_{k,N}$ with $N$ fixed, we have $|v_k|\leq N$ and hence $e^{\alpha v_k^2}-1 \lesssim v_k^2$. As mentioned at the beginning of Section \ref{s:Sobolev}, Rellich's theorem implies that $\|v_k\|_{L^2(K)}\to 0$. We deduce that 
$$\lim_{k\to\infty} \|e^{\alpha v_k^2}-1\|_{L^1(K\setminus K_{k,N})}=0.$$
This, together with \eqref{e:K_k_N}, imply that
$$\limsup_{k\to\infty} \|e^{\alpha v_k^2}-1\|_{L^1(K)}\leq c e^{-\alpha N^2}.$$
Since this estimate holds for every $N$, the proposition follows.
\endproof

\proof[Proof of Corollary \ref{cor-Lp-CV}.]
By H\"older's inequality, we can assume that $p\geq 2$. 
Then the corollary is a direct consequence of Proposition 
\ref{p:CV-exp} because $|t|^p\lesssim e^{\alpha t^2}-1$. 
\endproof

\begin{example} \label{ex-contiu} We consider the 1-dimensional case. Let $\D$ be  the unit disc in  $\C$.  Let  $u:=(1-\log|z|)^{1/3}$ for $z \in \D$. One can check directly that $u\in W^{1,2}_*(\D)$. Consider $f(x):= x^{-1} (-\log |x|)^{-3}$ for  $x \in (0,1/2]$,  and the positive measure $\mu:=\bold{1}_{[0,1/2]} f(x) d x$.  Observe that $\log|z|$ is in $L^{3/2}(\mu)$ but not in $L^3(\mu)$. We have $\mu = \ddc v$ for $v(w):= \int_\D \log |z-w| d \mu(w)$. Using the concrete form of $f$, one can prove that $v$ is continuous on $\D$, and  $e^{\alpha u^2}$ is not locally integrable with respect to $\mu$ for any $\alpha>0$.
\end{example}

\bibliography{biblio_family_MA,biblio_Viet_papers}

\begin{thebibliography}{10}

\bibitem{Bedford_Taylor_82}
{\sc E.~Bedford and B.~A. Taylor}, {\em A new capacity for plurisubharmonic
  functions}, Acta Math., 149 (1982), pp.~1--40.

\bibitem{ChangYang}
{\sc S.-Y.~A. Chang and P.~C. Yang}, {\em Conformal deformation of metrics on
  {$S^2$}}, J. Differential Geom., 27 (1988), pp.~259--296.

\bibitem{Chern_Levine_Nirenberg}
{\sc S.~S. Chern, H.~I. Levine, and L.~Nirenberg}, {\em Intrinsic norms on a
  complex manifold}, in Global {A}nalysis ({P}apers in {H}onor of {K}.
  {K}odaira), Univ. Tokyo Press, Tokyo, 1969, pp.~119--139.

\bibitem{Cianchi}
{\sc A.~Cianchi}, {\em Moser-{T}rudinger inequalities without boundary
  conditions and isoperimetric problems}, Indiana Univ. Math. J., 54 (2005),
  pp.~669--705.

\bibitem{Demailly_ag}
{\sc J.-P. Demailly}, {\em Complex analytic and differential geometry}.
\newblock \url{https://www-fourier.ujf-grenoble.fr/~demailly}.

\bibitem{DiNezzaGuedjLu-MT}
{\sc E.~{Di Nezza}, V.~{Guedj}, and C.~H. {Lu}}, {\em {Finite entropy vs finite
  energy}}, {Comment. Math. Helv.}, 96 (2021), pp.~389--419.

\bibitem{DLW}
{\sc T.-C. Dinh, L.~Kaufmann, and H.~Wu}, {\em Dynamics of holomorphic
  correspondences on {R}iemann surfaces}.
\newblock \url{https://arxiv.org/abs/1808.10130}, 2018.
\newblock to appear in Internat. J. Math.

\bibitem{DinhVietanhMongeampere}
{\sc T.-C. Dinh and V.-A. Nguy{\^e}n}, {\em Characterization of
  {M}onge-{A}mp\`ere measures with {H}\"older continuous potentials}, J. Funct.
  Anal., 266 (2014), pp.~67--84.

\bibitem{DVS_exponential}
{\sc T.-C. Dinh, V.-A. Nguy{\^e}n, and N.~Sibony}, {\em Exponential estimates
  for plurisubharmonic functions and stochastic dynamics}, J. Differential
  Geom., 84 (2010), pp.~465--488.

\bibitem{DS_decay}
{\sc T.-C. Dinh and N.~Sibony}, {\em Decay of correlations and the central
  limit theorem for meromorphic maps}, Comm. Pure Appl. Math., 59 (2006),
  pp.~754--768.

\bibitem{DS_superpotential}
\leavevmode\vrule height 2pt depth -1.6pt width 23pt, {\em Super-potentials for
  currents on compact {K}\"ahler manifolds and dynamics of automorphisms}, J.
  Algebraic Geom., 19 (2010), pp.~473--529.

\bibitem{Evans_pde}
{\sc L.~C. Evans}, {\em Partial differential equations}, vol.~19 of Graduate
  Studies in Mathematics, American Mathematical Society, Providence, RI,
  second~ed., 2010.

\bibitem{Federer}
{\sc H.~Federer}, {\em Geometric measure theory}, Die Grundlehren der
  mathematischen Wissenschaften, Band 153, Springer-Verlag New York Inc., New
  York, 1969.

\bibitem{Hiep_holder}
{\sc P.~H. Hiep}, {\em H\"older continuity of solutions to the
  {M}onge-{A}mp\`ere equations on compact {K}\"ahler manifolds}, Ann. Inst.
  Fourier (Grenoble), 60 (2010), pp.~1857--1869.

\bibitem{Hormander}
{\sc L.~H{\"o}rmander}, {\em Notions of convexity}, vol.~127 of Progress in
  Mathematics, Birkh\"auser Boston, Inc., Boston, MA, 1994.

\bibitem{josefson}
{\sc B.~Josefson}, {\em On the equivalence between locally polar and globally
  polar sets for plurisubharmonic functions on {${\bf C}^{n}$}}, Ark. Mat., 16
  (1978), pp.~109--115.

\bibitem{Lucas_ex}
{\sc L.~Kaufmann}, {\em A {S}koda-type integrability theorem for singular
  {M}onge-{A}mp\`ere measures}, Michigan Math. J., 66 (2017), pp.~581--594.

\bibitem{Klimek}
{\sc M.~Klimek}, {\em Pluripotential theory}, vol.~6 of London Mathematical
  Society Monographs. New Series, The Clarendon Press, Oxford University Press,
  New York, 1991.
\newblock Oxford Science Publications.

\bibitem{Kolodziej05}
{\sc S.~Ko{\l}odziej}, {\em The complex {M}onge-{A}mp\`ere equation and
  pluripotential theory}, Mem. Amer. Math. Soc., 178 (2005), pp.~x+64.

\bibitem{NgocCuong-Kolodziej}
{\sc S.~Ko{\l}odziej and N.~C. Nguyen}, {\em Continuous solutions to
  {M}onge-{A}mp\`ere equations on {H}ermitian manifolds for measures dominated
  by capacity}.
\newblock \url{https://arxiv.org/abs/2003.05061}, 2020.

\bibitem{Moser}
{\sc J.~Moser}, {\em A sharp form of an inequality by {N}. {T}rudinger},
  Indiana Univ. Math. J., 20 (1970/71), pp.~1077--1092.

\bibitem{Hoang-MT}
{\sc V.~H. Nguyen}, {\em Improved {M}oser-{T}rudinger inequality for functions
  with mean value zero in {$\Bbb{R}^n$} and its extremal functions}, Nonlinear
  Anal., 163 (2017), pp.~127--145.

\bibitem{Okada}
{\sc M.~Okada}, {\em Espaces de {D}irichlet g\'{e}n\'{e}raux en analyse
  complexe}, J. Functional Analysis, 46 (1982), pp.~396--410.

\bibitem{Sibony_duke}
{\sc N.~Sibony}, {\em Quelques probl\`emes de prolongement de courants en
  analyse complexe}, Duke Math. J., 52 (1985), pp.~157--197.

\bibitem{Trudinger}
{\sc N.~S. Trudinger}, {\em On imbeddings into {O}rlicz spaces and some
  applications}, J. Math. Mech., 17 (1967), pp.~473--483.

\bibitem{Vigny}
{\sc G.~Vigny}, {\em Dirichlet-like space and capacity in complex analysis in
  several variables}, J. Funct. Anal., 252 (2007), pp.~247--277.

\bibitem{Vu_MA}
{\sc D.-V. Vu}, {\em Complex {M}onge--{A}mp\`ere equation for measures
  supported on real submanifolds}, Math. Ann., 372 (2018), pp.~321--367.

\bibitem{Vu_pluripolar}
\leavevmode\vrule height 2pt depth -1.6pt width 23pt, {\em Locally pluripolar
  sets are pluripolar}, Internat. J. Math., 30 (2019).

\bibitem{Vu_nonkahler_topo_degree}
\leavevmode\vrule height 2pt depth -1.6pt width 23pt, {\em Equilibrium measures
  of meromorphic self-maps on non-{K}\"{a}hler manifolds}, Trans. Amer. Math.
  Soc., 373 (2020), pp.~2229--2250.

\bibitem{Yau1978}
{\sc S.~T. Yau}, {\em On the {R}icci curvature of a compact {K}\"ahler manifold
  and the complex {M}onge-{A}mp\`ere equation. {I}}, Comm. Pure Appl. Math., 31
  (1978).

\end{thebibliography}
\bibliographystyle{siam}

\bigskip

\noindent
\Addresses
\end{document}